\newtheorem{thm}{Theorem}
\newtheorem{lemma}[thm]{Lemma}
\newtheorem{prop}[thm]{Proposition}
\newtheorem{cor}[thm]{Corollary}
\theoremstyle{remark}
\newtheorem{remark}[thm]{Remark}
\theoremstyle{definition}
\newtheorem{defn}[thm]{Definition}
\newcommand{\RR}{\ensuremath{\mathbb{R}}}
\newcommand{\N}{\ensuremath{\mathbb{N}}}
\newcommand{\Z}{\ensuremath{\mathbb{Z}}}
\newcommand{\B}{\ensuremath{\mathbb{B}}}
\newcommand{\T}{\ensuremath{\mathbb{T}}}
\newcommand{\K}{\ensuremath{\mathbb{K}}}
\newcommand{\E}{\ensuremath{\mathbb{E}}}
\newcommand{\X}{\mathcal{X}}
\newcommand{\leb}{\mathcal{L}eb}
\newenvironment{itemize*}{\vspace{-10pt}\begin{itemize}\setlength{\itemsep}{0pt}\setlength{\parskip}{2pt}}{\end{itemize}}
\newenvironment{enumerate*}{\vspace{-10pt}\begin{enumerate}\setlength{\itemsep}{0pt}\setlength{\parskip}{2pt}}{\end{enumerate}}
\newenvironment{description*}{\vspace{-12pt}\begin{description}\setlength{\itemsep}{0pt}\setlength{\parskip}{2pt}}{\end{description}}
\newcommand{\Sp}{\ensuremath{\mathbb{S}}}
\newcommand{\abs}[1]{\left|#1\right|}
\newcommand{\set}[2]{\left\{#1\ ; \ #2  \right\}}
\renewcommand{\d}{\mathrm{d}}
\newcommand{\ind}{\ensuremath{\mathds{1}}}
\newcommand{\e}[1]{\mathrm{e}_{#1}}
\newcommand{\cadlag}{c\`adl\`ag}
\newcommand{\levy}{L\'evy}
\DeclareMathOperator{\GL}{GL}
\DeclareMathOperator{\Exp}{Exp}
\DeclareMathOperator{\OO}{O}
\DeclareMathOperator{\SO}{SO}
\DeclareMathOperator{\Hol}{Hol}
\DeclareMathOperator{\supp}{supp}
\title{L\'evy processes on smooth manifolds with a connection}
\keywords{L\'evy process on a smooth manifold; linear connection; holonomy bundle; Marcus stochastic differential equation; horizontal L\'evy process; stochastic horizontal lift; stochastic anti-development}
\subjclass[2020]{60G51; 58J65; 60J25}
\author{Aleksandar Mijatovi{\'c}}
\address{Department of Statistics, University of Warwick, \& The Alan Turing Institute, UK}
\email{a.mijatovic@warwick.ac.uk}
\author{Veno Mramor}
\address{Department of Statistics, University of Warwick, \& The Alan Turing Institute, UK}
\email{veno.mramor@warwick.ac.uk}
\thanks{We thank David Applebaum for helpful comments, which greatly improved an earlier version of the paper. This paper forms a part of the PhD thesis of the second author and we would like to thank the examiners Andreas Kyprianou and Kenneth David Elworthy for thorough reading and in-depth comments which greatly improved the contents of the paper.}
\numberwithin{equation}{section}
\numberwithin{thm}{section}
\begin{document}

\begin{abstract}
We define a \levy{} process on a smooth manifold $M$ with a connection as a
	projection of a solution of a Marcus stochastic differential equation
	on a holonomy bundle of $M$, driven by a holonomy-invariant \levy{}
	process on a Euclidean space. On a Riemannian manifold, our definition
	(with Levi-Civita connection) generalizes the
	Eells-Elworthy-Malliavin construction of the Brownian motion and
	extends the class of isotropic \levy{}
	process introduced in Applebaum~and~Estrade~\cite{isotropiclevy}.  On a
	Lie group with a surjective exponential map, our definition (with left-invariant connection)
	coincides with the classical definition of a (left) \levy{} process
	given in terms of its increments.

Our main theorem characterizes the class of \levy{} processes via their
	generators on $M$, generalizing the fact that the Laplace-Beltrami
	operator generates Brownian motion on a Riemannian manifold.  Its proof
	requires a path-wise construction of the stochastic horizontal lift and
	anti-development of a discontinuous semimartingale, leading to a
generalization of  Pontier~and~Estrade~\cite{horizontalLift} to smooth manifolds with non-unique geodesics between distinct points.
\end{abstract}

\maketitle

\section{Introduction}
\levy{} processes on the Euclidean space $\RR^d$ are fundamental and well-studied stochastic processes with \cadlag{} (right continuous with left limits) paths, characterized by the independence and
stationarity of their increments (see e.g.\ monographs~\cite{BertoinLevy,satolevy,kyprianouFluctuationsLevy} and the references therein).  
\levy{} processes have a natural generalization to
Lie groups 
 introduced in \cite{huntgroups} and have been studied extensively over the last half a century (see
monograph~\cite{liao_levy} for a modern account). The definition relies on the group structure of the state space, which allows to define a natural notion of the increment of a process.
Since there is no natural notion of an increment on a Riemannian manifold $M$, \cite{isotropiclevy} uses a geodesics-based approach to
define isotropic \levy{} processes on $M$. However, these two definitions
are not entirely compatible: for example, if $M$ is a $d$-dimensional torus with a flat metric, isotropic \levy{} processes form a strict subclass of
\levy{} processes on $M$, defined via its Lie group structure.
The same phenomenon may be observed on the Euclidean space $\RR^d$ or indeed any Lie group equipped with a left-invariant Riemannian metric.

The main contribution of the present paper is twofold. \textbf{(I)}~By relaxing key restrictions from the definition of isotropic
\levy{} process on Riemannian manifolds from~\cite{isotropiclevy}, we extend the definition of \levy{} processes to smooth manifolds equipped with a
connection. A fundamental property of \levy{} processes is that they jump in the ``same way'' from any point in the state
space. Our generalization is natural, since it is a connection (and not a metric) that is a required geometric
structure for the comparison of jumps along geodesics at different points on the manifold.
Moreover, a canonical connection is often available when there is no canonical metric.
For example the class of \levy{} processes induced by the left-invariant connection on a Lie group (with a surjective exponential map) coincides with the class of the classical (left) \levy{} process.
Furthermore, our definition with the Levi-Civita connection on a Riemannian manifold extends the class of isotropic
\levy{} processes, closing the aforementioned compatibility gap. 
\textbf{(II)}~Our main theorem (Theorem~\ref{thm:charLevyManifolds} below) characterizes the law of a \levy{} process on a smooth manifold with a connection
via its infinitesimal generator, generalizing the classical result that the law of a Brownian motion on a Riemannian
manifold is characterized by the Laplace-Beltrami operator.

Our definition generalizes the celebrated
Eells-Elworthy-Malliavin construction of a Brownian motion on a $d$-dimensional Riemannian manifold, given as a
projection of a horizontal Brownian motion in the orthonormal frame bundle. The horizontal Brownian motion is
defined as a solution of the Stratonovich stochastic differential equation (SDE) on the orthonormal frame bundle, with the integrator a standard Brownian motion
on $\RR^d$ and the coefficients given by the fundamental horizontal vector fields. Consider a
smooth manifold $M$ with a connection and a corresponding holonomy bundle (a subbundle of the frame
bundle of $M$). We define a \levy{} process $X$ on $M$ to be the projection of a horizontal \levy{} process $U$,
defined via an SDE on a holonomy bundle with the integrator a
\levy{} process $Y$ on $\RR^d$ and the coefficients given by the fundamental horizontal vector fields in the tangent bundle of the holonomy bundle (see
Section~\ref{ConsAndResults} below). Since $Y$ is typically discontinuous, we use the theory of
Marcus SDEs~\cite{marcusSDE,genStratonovich,fujiwaraLevyFlowManifolds,ApplebaumKunitaLevyFlow} in order to obtain the
correct transformation properties under the change of
coordinates on a smooth manifold, not possessed by the Stratonovich integral in the presence of jumps.

A \levy{} process on $\RR^d$ is a time-homogeneous Markov process.
In order to ensure that the time-homogeneous Markov property holds for our
process $X$ on $M$, the horizontal process $U$ on the holonomy bundle has to be (a)~time-homogeneous Markov and (b)~invariant under the
natural action of the corresponding holonomy group. Since $U$ is a strong solution of a Marcus SDE with the integrator $Y$, requirement~(a) forces the process $Y$ to be a \levy{} process on $\RR^d$~\cite{JacodProtterMarkov, EstradeMarkov}.
Condition~(b) is equivalent to the generating triplet of $Y$ being invariant under the action of a holonomy group on $\RR^d$ (see~\eqref{cond:invariance} below).

Holonomy bundle and the corresponding holonomy group are not uniquely determined by the connection on a connected smooth manifold $M$. However, all possible choices of holonomy bundles are diffeomorphic and the corresponding
holonomy groups are conjugate Lie subgroups of the group of invertible matrices in $\RR^d\otimes\RR^d$.
Thus, the choice of the holonomy bundle and the corresponding holonomy group does not affect the class of
all \levy{} processes on $M$ described by our definition (see Proposition~\ref{prop:wellDefLevyonman} below).

The characterization of the law of a \levy{} process $X$ via its infinitesimal generator requires a construction of the stochastic horizontal lift $U$ in the holonomy bundle and
the stochastic anti-development $Y$ in $\RR^d$, such that $U$ solves the Marcus SDE driven by $Y$ and is mapped
into $X$ by the bundle projection. The jumps of the solution $U$ of the Marcus SDE are given via the flow of the
horizontal vector fields, making $X$ jump along the geodesics in $M$.
The main technical difficulty in constructing a stochastic horizontal lift $U$ from the generator of $X$
is that a jump of $X$ may be induced by many different jumps of $U$. This issue disappears when a manifold $M$ possesses a unique geodesic
between any two points, as is assumed in \cite{horizontalLift}, since then the structure of the Marcus SDE
implies that the jumps of $X$ and $U$ are in one-to-one correspondence. However, this geometric assumption is quite
restrictive as it excludes for example all compact Riemannian manifolds e.g spheres of any dimension. More precisely, compact Riemannian manifolds are bounded as metric spaces and on the other hand geodesically complete by the Hopf-Rinow theorem, which would imply unboundedness if the unique geodesic assumption were in place.  
To circumvent this issue in a general setting, 
we  first append compatible ``jump data'' to the process $X$. We can then prove that $X$ and the ``jump data'' together
uniquely determine the horizontal lift $U$ and the anti-development $Y$, see Theorem~\ref{thm:uniqueLift} below. 
Moreover, it is possible to construct on an enlarged probability space suitable ``jump data'' so that the anti-development $Y$ of the
process $X$ is precisely a \levy{} process on $\RR^d$ with the generating triplet induced by the generator of $X$. 

On Riemannian manifolds it is natural to consider the Levi-Civita connection which is uniquely determined by
the metric. A holonomy group of this connection is a Lie subgroup of the orthogonal group
implying that our new
definition of \levy{} processes is a genuine generalization of isotropic \levy{} processes defined in
\cite{isotropiclevy} (see Section~\ref{sec:LevyOnRiem} below for details). In particular, since the holonomy group of the flat connection on $\RR^d$ is trivial, our
definition coincides with the classical definition of \levy{} processes on $\RR^d$.
Lie groups do not necessarily possess a natural Riemannian metric, but have a natural choice of a left-invariant
connection. In Section~\ref{levyonlie} below we show that this choice of a connection implies that a \levy{}
process satisfying our definition is a classical (left) \levy{} process on a Lie group. 
If the exponential map of the Lie group is not surjective, then jumps which are
not in the image of the exponential map cannot be expressed via the geodesics of the left-invariant
connection, so (left) \levy{} process having such jumps cannot be expressed via our definition. However, as soon as
all jumps can be expressed in such a way, the (left) \levy{} process can be expressed via our new definition (see Proposition~\ref{prop:reprLevyOnLieViaHor}).
In particular, on the Lie group with surjective exponential map the two notions of \levy{} processes coincide.

The remainder of the paper is structured as follows. In Section~\ref{ConsAndResults} we apply 
the theory of Marcus SDEs on manifolds to construct a \levy{} process on a smooth
manifold with a connection and state our main results. 
In Section~\ref{examples} we discuss examples of \levy{} processes on manifolds with a connection, 
including special cases of
Riemannian manifolds and Lie groups as well as new examples with exceptional holonomy groups. 
The proofs of our main results are given in Section~\ref{proofsandTechnicalresults}.
Some facts from differential geometry used in our analysis are collected in Appendix~\ref{app:connections}.

\section{Construction and characterization of \levy{} processes on manifolds with a connection}
\label{ConsAndResults}

Let $M$ be a connected smooth $d$-dimensional manifold without boundary. 
Throughout we use the following notation: $C^\infty(M)$
denotes the space of smooth real valued functions on $M$, 
$TM$ is the tangent bundle on $M$, $\Gamma(TM)$
denotes the vector fields on $M$ (i.e.\ smooth sections of the
tangent bundle), see~\cite{kobayashinomizu} as a
general reference for differential geometry. 
A \cadlag{} $M$-valued stochastic process $X= (X_t)_{t\in[0,\tau)}$ is a \emph{semimartingale on a stochastic interval $[0,\tau)$}, where $\tau$ is a predictable stopping time if
$f(X^{\tau_n})$ is a real-valued semimartingale for any $f\in C^\infty(M)$
and $n\in \N,$ where $X^{\tau_n}=(X^{\tau_n}_t)_{t\in\RR_+}$ with $X^{\tau_n}_t=X_{\min\{\tau_n,t\} }$ for $t\in\RR_+$ denotes a stopped process and stopping times
$\set{\tau_n}{n\in \N}$ form an announcing sequence of $\tau,$ i.e.\ $\tau_n \uparrow \tau$ as $n\to \infty$ and
$\tau_n < \tau$ a.s.\ for every $n\in \N$. See \cite{ProtterIntegration,JacodShiryaevLimit} as a general reference for standard (real) semimartingales and
predictable stopping times.
One easily checks using It\^o's formula that if $M=\RR^d$, this definition (with $\tau \equiv \infty$) is equivalent to the standard definition of 
a semimartingale.

The corresponding generalisation of \levy{} processes is not as straightforward
since on a general smooth manifold there is no suitable notion of an increment.
An alternative approach, also used in~\cite{isotropiclevy}, 
exploits the fact that points of a smooth manifold possess tangent
spaces, which are real vector spaces 
providing a local linearization of the manifold.  Each tangent space is
isomorphic to $\RR^d$, making it feasible  to ``develop'' a \levy{} process on a tangent space onto the manifold. 
Since at different points in $M$ the 
tangent spaces are not 
canonically isomorphic,  for such a construction to work we
need to be able to use curves in $M$ to ``connect \&  compare'' different tangent spaces. This highlights the need  for
a (linear) connection on $M$. Our main aim is to show that this geometric structure  is 
sufficient for a \levy{} process on $M$ to be defined, circumventing the need for a Riemannian metric used in~\cite{isotropiclevy}.
There is an intermediate step -- we
need to consider a horizontal process on
the holonomy bundle, which is a solution of a specific Marcus SDE.

In Section~\ref{sec:MarcusSDE} we recall basic properties of Marcus SDEs on manifolds.
In Section~\ref{generalLevy} we apply  Marcus
SDEs to define a horizontal \levy{} process on the holonomy bundle, which is then projected to a \levy{} process on a manifold with a connection.
Section~\ref{sec:viagenerator} describes our class of \levy{} processes via generators on $M$.

\subsection{Marcus SDE} \label{sec:MarcusSDE}
The Stratonovich integral which is used for SDEs on smooth manifolds with continuous semimartingale integrators does not obey the classical chain rule formula when the integrator has jumps.
The first theory of SDEs with jumps on manifolds was introduced in~\cite{rogersonSDEs} and allowed integration against Brownian
motion and Poisson random measures with quite general jump coefficients.
A more modern theory with the same class of integrators is studied in~\cite{KunitaBookStochFlows} and also deals with stochastic flows of diffeomorphisms. 
Technically, such an approach with a suitable choice of a jump coefficient could be used for our
Definition~\ref{def:levyonman} of \levy{} processes on manifold where the integrator is a Euclidean \levy{} process.
However, we also wish to consider more general \cadlag{} semimartingale integrators, especially in the context of stochastic horizontal
lifts and stochastic anti-developments (see Definition~\ref{def:horprocess} and Theorem~\ref{thm:uniqueLift}).

Hence, we will consider a unified approach and utilise a more general notion of a Marcus (or canonical) SDE which was
introduced in the Euclidean spaces in~\cite{marcusSDE} and was later generalized (still in Euclidean
spaces) in~\cite{genStratonovich} and allows integration against arbitrary \cadlag{} semimartingales. Due to correct change-of-variable properties
Marcus SDE arises naturally in the context of smooth
manifolds~\cite{fujiwaraLevyFlowManifolds,ApplebaumKunitaLevyFlow,isotropiclevy}.
In order to describe the generalisation of Marcus SDE 
to smooth manifolds required in this paper, 
we first  recall some notions from differential geometry.

A smooth curve $\gamma\colon I\subseteq \RR \to M$ is an \emph{integral curve of a vector field $\X\in\Gamma(TM)$} if $\dot{\gamma}_s:=d\gamma_s\left(\left.\frac{d}{dt}\right|_s\right)=\X_{\gamma_s}$ holds
for every $s\in I$, i.e.\ the velocity vector of the curve is equal to the value of the vector field $\X$ at the same point. As a consequence of the local existence
and uniqueness theorem for systems of ordinary differential equations (ODEs), there exists a unique integral curve of $\X$ on the maximal time interval once the initial point is fixed.
We say that a vector field is complete if all its integral curves are defined for all times in
$\RR$. This allows us to consider the flow of a vector field and a flow exponential map.  A \emph{flow of a complete vector field $\X$}
is a one-parameter group of diffeomorphisms $\Phi^\X_s\colon M\to M$ for any $s\in\RR$, such that for any $p\in M$ the curve $s\mapsto \gamma_\X^p(s):=\Phi^\X_s(p)$ is a unique integral curve of $\X$ with $\gamma_\X^p(0)=p.$ It
is easy to see that we have the flow property $\Phi^\X_s\circ\Phi^\X_t=\Phi^\X_{s+t}$ for any $s,t\in\RR.$ We can also define the \emph{flow exponential map} $\exp(\X):=\Phi^\X_1\colon M \to M$ which represents a
diffeomorphism which moves points a unit time along the integral curves of a vector field $\X$.

\begin{defn} \label{def:MarcusSDE}
	Let $M$ be a smooth manifold, let $V_1,\ldots,V_\ell\in \Gamma(TM)$ be smooth vector fields on $M$ and let $W$ be an $\RR^\ell$-valued \cadlag{} semimartingale. We call an $M$-valued semimartingale
	X defined on a stochastic interval $[0,\tau)$ \emph{a solution of a Marcus SDE }
	\begin{equation} \label{def:solSDEgen}
		\d X_t= V_i(X_{t-})\diamond \d W^i_t
	\end{equation} if for any $f\in C^{\infty}(M)$ it satisfies the equation
	\begin{equation} \label{marcusDefProp}
		f(X_t)-f(X_0)=\int_0^tV_i f(X_{s-})\circ\d W^i_s + \sum_{0<s\le t} \Big( f\big(\exp(V_i\Delta W^i_s)(X_{s-})\big)-f(X_{s-})-V_if(X_{s-})\Delta W^i_s\Big)
	\end{equation}
	for  $0\le t<\tau,$
	where It\^o's circle $\circ$ denotes the Stratonovich integral, $\Delta W_s=W_s-W_{s-}$ and $\exp$ is a flow exponential map.
\end{defn}
\begin{remark}
	\begin{enumerate}[(i)]
		\item  In~\eqref{def:solSDEgen} and \eqref{marcusDefProp}, as well as in the rest of the paper, we always use Einstein's convention of
		summing over repeated indices, when they appear once as an upper and once as a lower index.
		\item A more precise meaning of~\eqref{marcusDefProp} is as follows. Consider an announcing sequence $\set{\tau_n}{n\in \N}$ of $\tau.$
		We require that for every $n\in \N$ the stopped process
		$X^{\tau_n}$ is a semimartingale and that equation~\eqref{marcusDefProp} with $X$ exchanged by
		$X^{\tau_n}$ holds for every $t\in \RR_+$.
		\item The sum in the defining property~\eqref{marcusDefProp} of a Marcus SDE 
			can be interpreted as follows:  
			the process $X$ jumps only if the
			integrator $W$ jumps and then the jump of $X$ occurs
			along an integral curve of a certain vector field,
			constructed as a linear combination of the driving vector fields with the coefficients given by 
			the components of the jump of the integrator. 
	\end{enumerate}

\end{remark}

\begin{prop} \label{prop:MarcusWellDef}
	Suppose that for every $a=a^i\e{i}\in\RR^d$ the vector field $a^iV_i$ is complete. Then there exists a unique solution $X$ of Marcus
	SDE~\eqref{def:solSDEgen}
	and it is a semimartingale defined on a maximal stochastic interval $[0,\tau),$ where $\tau$ is a predictable stopping time which is an
	explosion time\footnote{If we consider a one-point compactification $\widehat{M} := M \cup \{\partial\}$
		this means that on the event $\{\tau<\infty\}$ as $t\to\tau$ we have $X_t \to \partial$ or equivalently $X$ exits all compacts in $M$.}.
\end{prop}

\begin{remark} \label{remark:jumptoinfty}
	A solution process $X$ of Marcus SDE~\eqref{def:solSDEgen} exists even when vector fields are not complete.
	However, then the stopping time $\tau$ is not necessarily predictable and it might happen that $X$ ``jumps'' to the cemetery point $\partial$ and does not reach it continuously.  To avoid these issues we will insist on the completeness assumption.
\end{remark}
Since all vector fields on compact manifolds are complete, we immediately obtain the following corollary.
\begin{cor}
	If $M$ is compact, then the solution $X$ of Marcus SDE~\eqref{def:solSDEgen} is a semimartingale defined on the whole interval $[0,\infty).$
\end{cor}
A more general type of SDEs on manifolds, which includes~\eqref{def:solSDEgen}, was considered
in~\cite{CohenSDEs1,CohenSDEs2}. 
But for the sake of completeness and due to its simplicity, we give a direct proof of Proposition~\ref{prop:MarcusWellDef} 
in Section~\ref{sec:MarcusSDEResults} below.

\subsection{Horizontal \levy{} processes and \levy{} processes on a smooth manifold with a connection} \label{generalLevy}

Let $F(M)$ denote a frame bundle over $M$ (defined as a bundle of linear frames in \cite[p.~56]{kobayashinomizu}) with a bundle projection $\pi\colon F(M)\to M$. The bundle $F(M)$ is a
smooth manifold of dimension $d+d^2$ and the projection $\pi$ is a smooth map. A frame $u\in F(M)$, such that $\pi(u)=p$, is an ordered basis $u_1,\ldots,u_d$ of the tangent space $T_pM$.
We may regard it as a linear isomorphism $u\colon \RR^d\to T_pM$ given by 
\begin{equation} \label{def:framefunction}
	u(x)=x^iu_i \quad \text{for each} \quad x=x^ie_i\in \RR^d,
\end{equation}
where $e_1,\ldots,e_d $ is a standard  basis of $\RR^d.$
The general linear group $\GL(d)$, consisting of invertible matrices in
$\RR^d\otimes \RR^d$, acts on $F(M)$ on the right by $R_g(u)=ug:=u\circ g$,
where $g\in\GL(d)$ is considered as an automorphism of $\RR^d.$ This turns
$F(M)$ into a principal fibre bundle (over $M$) with a structure group $\GL(d)$
(see definition in \cite[\S5,~Ch.~I]{kobayashinomizu}).

We assume that $M$ is equipped with a (linear) connection. A connection is
given by a covariant derivative which prescribes how vector fields can be
derived along other vector fields. A covariant derivative induces a direct sum
splitting $T_uF(M)=H_uF(M)\oplus V_uF(M)$ of the tangent spaces of the frame
bundle $F(M)$ at each frame $u$. Whereas vertical spaces $V_uF(M):=\ker d\pi_u$
are canonically defined, the horizontal spaces
$H_uF(M)$ are induced by the covariant derivative and they are invariant by the differential of a right group action of $\GL(d)$.
This (invariant) splitting is what is classically defined as a (linear)
connection in the frame bundle $F(M)$ in \cite[Chs.~II and
III]{kobayashinomizu}. It is well-known that such an  invariant 
splitting on $F(M)$ is equivalent to  a covariant derivative, see~\cite[Thm.~7.5 in Ch.~III]{kobayashinomizu}. More details about
covariant derivatives and the induced splitting are given in
Appendix~\ref{app:connections}.

The horizontal vector spaces are of dimension $d$. Since 
$d\pi_u$ 
is surjective, the restricted map $d\pi_u\colon H_uF(M)\to T_{\pi(u)}M$
is an isomorphism, inducing the horizontal lift 
in $T_uF(M)$ of any tangent
vector in $T_{\pi(u)}M$. Thus, for each $x\in\RR^d$ there exists a
canonical fundamental horizontal vector field $H_x$ on $F(M)$
(defined as a
standard horizontal vector field in \cite[p.~119]{kobayashinomizu}), uniquely determined by:
\begin{enumerate}[(H1)]
	\item $H_x(u)\in H_uF(M),$
	\item $d\pi_u(H_x(u))=u(x)$ \label{def:horVFChar}
\end{enumerate}
for each $u\in F(M).$
It follows that $x\mapsto H_x$ is a linear map and we denote $H_i:=H_{e_i}$ for $e_1,\ldots,e_d $ the standard  basis of $\RR^d.$ 
In order to be able to use Proposition~\ref{prop:MarcusWellDef} and avoid the non-predictability issues outlined Remark~\ref{remark:jumptoinfty}, we will 
assume that all the vector fields $H_e$,~$e \in \RR^d$ are complete, which is equivalent to geodesic completeness of the manifold $M$ (see paragraph
preceding equation~\eqref{bothExponentials} for relation between geodesics and integral curves of fundamental
horizontal vector fields).

\begin{defn} \label{def:horLevy}
	Let $M$ be equipped with a connection which turns it into a geodesically complete manifold.
	An $F(M)$-valued semimartingale $U$ defined on a stochastic interval $[0,\tau)$ is called a \emph{horizontal \levy{} process} if there exists an
	$\RR^d$-valued \levy{} process $Y$ (see \cite[Def.~1.6]{satolevy} for a definition of $\RR^d$-valued \levy{} processes) such that $U$ is a solution of a Marcus SDE 
	\begin{equation} \label{eq:horlevy}
		\d U_t=H_i(U_{t-})\diamond \d Y^i_t,\qquad U_0=u\in F(M),
	\end{equation}
	on the maximal (stochastic) interval $[0,\tau).$
\end{defn}
\begin{remark}
	\begin{enumerate}[(i)]
		\item Horizontal \levy{} processes were originally introduced in~\cite{ApplebaumHorizontalFirst} on the orthonormal bundle of the Riemannian manifold.
		\item We require geodesic completeness so that the flow exponential map is well defined on complete vector fields $H_x, x \in \RR^d$.
		Proposition~\ref{prop:MarcusWellDef} then shows that a horizontal \levy{}
		process $U$ is uniquely determined by the \levy{} process $Y$
		and is defined on the maximal stochastic interval
		$[0,\tau),$ where the assumption on geodesic completeness guarantees that $\tau$ is predictable.
	\end{enumerate}
\end{remark}

In the next step, we use the bundle projection $\pi$ and consider the projected
process $X:=\pi(U)$ which is a candidate for a \levy{} process on $M$. Since
the integrator $Y$ of Marcus SDE~\eqref{eq:horlevy} is a \levy{} process, the
horizontal \levy{} process $U$ is always a strong
Markov process \cite[Thm.~5.1]{genStratonovich}, but $X$ in general need not be. 
More precisely,  if $X_0=p$ we only know that $U_0$ is a frame in the fibre above $p$, 
i.e.\  $U_0=u$ such that $\pi(u)=p.$ But $U_0$ might be a different frame $v$ in the same fibre. 
The future evolution of $U$, and hence of $X$, could be vastly different in these two cases, $U_0$ = $u$ or $v$. 
However, since $u$ and $v$ are in the same fibre, there exists $g\in \GL(d)$ such that $u=vg$.
With this in mind, consider 
the process $\widetilde{U}:=Ug$, which by 
Lemma~\ref{lemma:transformMarcusSDE} below solves 
the Marcus SDE 
$$\d \widetilde{U}_t=(R_g)_*H_i(\widetilde{U}_{t-})\diamond \d Y^i_t,$$
where push-forward vector fields $(R_g)_*H_i$ are given in \eqref{def:pushforwardVF} below. Moreover, by Lemma~\ref{lemma:changeHorizontal} in the appendix, we have
$(R_g)_*H_x=H_{g^{-1}x}$ for every $x\in \RR^d$. Thus, linearity of $x\mapsto H_x$ and Lemma~\ref{lemma:linearchangeMarcusSDE} imply that $\widetilde{U}$ is also a solution of Marcus SDE
$$\d \widetilde{U}_t=H_i(\widetilde{U}_{t-})\diamond \d \widetilde{Y}^i_t$$ with $\widetilde{U}_0=ug=v$ where
$\widetilde{Y}:=g^{-1}Y.$ 
Consider a solution $V$ 
of Marcus SDE~\eqref{eq:horlevy}
with initial condition $V_0$, which 
is driven by $Y$.  Therefore, if $Y$ and
$\widetilde{Y}$ were to have the same law, then (weak) uniqueness of solutions of
Marcus SDEs shows that $\widetilde{U}$ and $V$ also have the same law, thus implying that 
the processes $\pi(U)=\pi(\widetilde{U})$
and $\pi(V)$ are equal in law. Hence by assuming the appropriate invariance of the integrator $Y$ we may use the result of Dynkin on transformations of Markov processes~\cite[Thm.~10.13]{DynkinBook} which ensures the Markovianity of the projected process $X$.
Fortunately, as we will see below, we do not have to check such an invariance for every $g\in \GL(d),$ but only for elements of a specific subgroup -- the holonomy group, which we now describe.

For any frame $u\in F(M)$, a connection on $M$ allows us to reduce the frame bundle $F(M)$ to a subbundle
$P(u)$ known as the holonomy bundle. The \emph{holonomy bundle $P(u)$} is a principal
fibre bundle over $M$ with a structure group given by the \emph{holonomy group $\Hol(u)$}, a
Lie subgroup of $\GL(d)$ \cite[Thm.~4.2 in Ch.~II]{kobayashinomizu}.  More
precisely,  a holonomy bundle $P(u)$
consists of all the frames in $F(M)$ connected to $u$ via horizontal curves in $F(M)$ (i.e.\ 
curves with velocity vectors in horizontal spaces). 
The essential property for our definition of \levy{} processes is that
the fundamental horizontal vector fields $H_x$, $x\in \RR^d$, are tangent to the
subbundle $P(u)$ (this follows by the definition of the holonomy bundle via
horizontal curves), hence they can be interpreted as vector fields on $P(u)$.
By Proposition~\ref{prop:MarcusWellDef}, any horizontal \levy{}
process $U$ (a solution of Marcus SDE~\eqref{eq:horlevy}) with $U_0\in P(u)$
stays in $P(u)$ for all times for which it is defined.Hence to use~\cite[Thm~10.13]{DynkinBook} which ensures that the
projection $X=\pi(U)$ is Markov, it is sufficient to check that the integrator $Y$
is $\Hol(u)-$invariant, i.e.\ the processes $Y$ and $gY$ are equal in law for every $g\in \Hol(u)$
or equivalently the characteristics of $Y$ satisfy~\eqref{cond:invariance} below.
\begin{defn} \label{def:levyonman}
		Let $Y$ be a $\Hol(u)$-invariant \levy{} process on $\RR^d$ and let $U=(U_t)_{t\in[0,\tau)}$ be a horizontal \levy{} process on the holonomy bundle $P(u)$ that
		solves Marcus SDE~\eqref{eq:horlevy} with $U_0\in P(u)$ a.s. Then with $\pi$ the bundle projection as above, $X=\pi(U)$ is said to be a \emph{\levy{} process on $M$}.
\end{defn}

\begin{prop} \label{prop:wellDefLevyonman}
A process $X$ on $M$ satisfying Definition~\ref{def:levyonman} is a Markov process and is a semimartingale on $[0,\tau)$.
The class of all \levy{} processes on a connected smooth manifold $M$
is invariant under the choice of the frame $u\in F(M)$ and the corresponding holonomy
bundle $P(u)$.  \end{prop}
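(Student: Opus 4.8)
The plan is to split the statement into two parts: (a) that $X=\pi(U)$ is a semimartingale on $[0,\tau)$ and a Markov process, and (b) that the class of all such $X$ does not depend on the choice of frame $u$ and holonomy bundle $P(u)$. For part (a), the semimartingale property is almost immediate: by Proposition \ref{prop:MarcusWellDef} the horizontal L\'evy process $U$ is an $F(M)$-valued semimartingale on $[0,\tau)$, and for any $f\in C^\infty(M)$ the composition $f\circ\pi\in C^\infty(F(M))$, so $f(X^{\tau_n})=(f\circ\pi)(U^{\tau_n})$ is a real semimartingale; hence $X$ is a semimartingale on $[0,\tau)$. For the Markov property I would follow the heuristic already spelled out in the text preceding Definition \ref{def:levyonman} and turn it into a proof: since $Y$ is a L\'evy process, $U$ is a (time-homogeneous strong) Markov process by \cite[Thm.~5.1]{genStratonovich}. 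To push this down to $X$, fix $p\in M$ and two frames $u,v$ in the fibre $\pi^{-1}(p)$, so $u=vg$ for some $g\in\GL(d)$ — in fact, since $U$ stays in $P(u)$, we only ever compare frames in the same holonomy bundle, so $g\in\Hol(u)$. Let $U$ start at $u$ and $V$ start at $v$, both driven by $Y$. Using Lemmas \ref{lemma:transformMarcusSDE}, \ref{lemma:changeHorizontal} and \ref{lemma:linearchangeMarcusSDE} exactly as in the displayed computation in the text, $\widetilde U:=Ug$ solves the horizontal Marcus SDE \eqref{eq:horlevy} with initial condition $v$ and integrator $\widetilde Y:=g^{-1}Y$; by $\Hol(u)$-invariance of $Y$ we have $\widetilde Y\overset{d}{=}Y$, so by weak uniqueness of solutions of Marcus SDEs $\widetilde U\overset{d}{=}V$, and therefore $\pi(U)=\pi(\widetilde U)\overset{d}{=}\pi(V)$. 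This shows the law of $X=\pi(U)$ depends on $U_0$ only through $\pi(U_0)=p$; combined with the strong Markov property of $U$ and the tower property (conditioning on $\mathcal F_t$, applying the Markov property of $U$ at the $P(u)$-valued random frame $U_t$, and then the frame-independence just established), one obtains that $X$ is a time-homogeneous Markov process.

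For part (b), I would use the structural facts about holonomy bundles recalled in the excerpt: if $u,u'\in F(M)$ are two frames, then either $P(u)=P(u')$ or they are disjoint, and in all cases $P(u')=P(u)a$ for a suitable $a\in\GL(d)$ (when $M$ is connected one can connect $u$ to a frame in $\pi^{-1}(\pi(u'))$ by a horizontal curve, reducing to the vertical comparison), with $\Hol(u')=a^{-1}\Hol(u)a$ a conjugate Lie subgroup. Given a L\'evy process $X$ built from $(u,P(u),Y)$ with $Y$ $\Hol(u)$-invariant, I claim it is also a L\'evy process in the sense of Definition \ref{def:levyonman} built on $P(u')$: set $U':=Ua$ and $Y':=a^{-1}Y$. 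Then $Y'$ is $\Hol(u')$-invariant, because for $g'=a^{-1}ga\in\Hol(u')$ we have $g'Y'=a^{-1}gY\overset{d}{=}a^{-1}Y=Y'$ by $\Hol(u)$-invariance of $Y$; and by Lemmas \ref{lemma:transformMarcusSDE}, \ref{lemma:changeHorizontal}, \ref{lemma:linearchangeMarcusSDE} the process $U'$ solves the horizontal Marcus SDE \eqref{eq:horlevy} driven by $Y'$ with $U'_0=U_0a\in P(u)a=P(u')$. Since $\pi\circ R_a=\pi$, we get $\pi(U')=\pi(U)=X$, so $X$ arises from data based on $P(u')$ as well. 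By symmetry the two classes coincide. (One should also note the mild point that the initial frame may be random, which is handled by conditioning on $U_0$ and the same argument fibrewise.)

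The main obstacle, and the step I would spend the most care on, is the rigorous passage from "$U$ is Markov and $\mathrm{Law}(\pi(U))$ depends only on $\pi(U_0)$" to "$X$ is a Markov process": one must check that the natural candidate transition semigroup $P_tf(p):=\E^{u}[f(\pi(U_t));\,t<\tau]$ (for any $u\in\pi^{-1}(p)$) is well defined — this is exactly the frame-independence lemma above — and then verify the Markov property of $X$ with respect to its own filtration by conditioning, using that $U_t\in\pi^{-1}(X_t)$ and applying the already-established law-identification at the (random) frame $U_t$. The only subtlety is bookkeeping with the stochastic interval $[0,\tau)$ and the announcing sequence $\{\tau_n\}$, so that all statements are read on the stopped processes $U^{\tau_n}$, $X^{\tau_n}$; since $\tau$ is predictable and $\pi$-compatible with stopping, this is routine. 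Everything else is an assembly of the lemmas from the appendix and the uniqueness statement in Proposition \ref{prop:MarcusWellDef}.
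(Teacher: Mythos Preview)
Your proposal is correct and follows essentially the same approach as the paper: the Markov property is deduced from the discussion preceding Definition~\ref{def:levyonman} (invariance of $Y$ under $\Hol(u)$ plus Lemmas~\ref{lemma:transformMarcusSDE}, \ref{lemma:changeHorizontal}, \ref{lemma:linearchangeMarcusSDE} and weak uniqueness), and the independence of the holonomy bundle is obtained by the same conjugation argument $\widetilde U=Ug$, $\widetilde Y=g^{-1}Y$ after reducing via connectedness to the case $v=ug$. You are simply more explicit than the paper about the semimartingale claim and about the passage from ``$U$ is Markov and the law of $\pi(U)$ depends only on $\pi(U_0)$'' to ``$X$ is Markov in its own filtration'', both of which the paper leaves implicit.
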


\begin{remark} \label{remark:levyonman}
	\begin{enumerate}[(i)]
		\item The holonomy bundle is a reduction of the frame bundle (see \cite{kobayashinomizu} for definition of a reduction). In fact, it is the smallest possible reduction compatible with the connection \cite[Remark~1.7.14]{DifGoMathPhys}. Hence we
		cannot in general reduce the holonomy group invariance of the \levy{} integrator any further and the class of \levy{} processes on the smooth manifold $M$ with a connection from Definition~\ref{def:levyonman} is the largest possible. 
		\item Due to a particular form of Marcus SDE~\eqref{eq:horlevy}, we know that $U_t=\exp(H_i\Delta Y^i_t)(U_{t-})$ holds for any $t<\tau,$ where $\exp$ is a flow
		exponential map (see Section~\ref{sec:MarcusSDE}). Then equation~\eqref{bothExponentials} shows that $X_t=\Exp_{X_{t-}}(U_{t-}\Delta Y_t)$ holds for any $t<\tau,$ where $\Exp_p$
		denotes a geodesic exponential map based at a point $p\in M$ (see Appendix~\ref{app:connections}). Therefore, jumps of $X$ occur only along geodesics.
		\item In the special case of Riemannian manifolds, \cite[Sec.~3]{mohariReduction} considered a holonomy
		bundle reduction and obtained similar results to Proposition~\ref{prop:wellDefLevyonman}. 
		More details about the Riemannian case are given in Section~\ref{sec:LevyOnRiem}.
		\item Note that if a \levy{} process $Y$ is adapted to filtration $(\mathcal{F}_t)_{t\in\RR_+}$,
		then also the horizontal \levy{} process $U$ and the \levy{} process $X$ on $M$ are adapted to the same filtration.
		However, even when we consider the natural filtration $(\mathcal{F}^Y_t)_{t\in\RR_+}$ of $Y$,
		it could happen that the natural filtrations of $U$ and $X$ are strictly smaller than $(\mathcal{F}^Y_t)_{t\in\RR_+}$.
	\end{enumerate}
\end{remark}

\subsection{Generators of \levy{} processes on manifolds with a connection} \label{sec:viagenerator}

In this section we give a characterisation via generators of  \levy{}
processes on smooth manifolds. 
The first step is to analyse the generators
of horizontal \levy{} processes.

\begin{lemma} \label{lemma:genhorlevy}
	The (extended) generator of a horizontal \levy{} process $U$, which solves  Marcus SDE~\eqref{eq:horlevy}, is given by
	\begin{equation} \label{eq:generatorHorLevy}
	\mathscr{L}_U(f)(v)=H_b f(v)+\frac{1}{2}a^{ij}H_i H_j f(v)+\int_{\RR^d\backslash\{0\}}\left( f(\exp(H_x)(v))-f(v)-\ind(\abs{x}<1)x^iH_i f(v)\right) \nu(\d x)	
	\end{equation} 
	for any frame $v\in P(u)$, 
	where $(a,\nu,b)$ is a generating triplet~\cite[p.~65]{satolevy} of a \levy{} integrator $Y$ associated with a cut-off function $\chi(x)=\ind(\abs{x}<1)$
	and the domain of $\mathscr{L}_U$ includes smooth functions on the holonomy bundle $P(u)$. 
\end{lemma}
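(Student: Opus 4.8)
The plan is to establish a semimartingale decomposition of the form $f(U_t)=f(U_0)+\int_0^t\mathscr{L}_Uf(U_{s-})\,\d s+M_t$ for every $f\in C_c^\infty(P(u))$, where $\mathscr{L}_Uf$ is the right-hand side of \eqref{eq:generatorHorLevy} and $M$ is a martingale with $M_0=0$; the generator is then obtained by taking expectations, dividing by $t$, and letting $t\downarrow 0$. Since \eqref{eq:generatorHorLevy} only concerns the behaviour of $U$ near its start, everything will be carried out on the stopped processes $U^{\tau_n}$ associated with an announcing sequence of $\tau$, which removes any issue with the stochastic interval $[0,\tau)$.

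First I would start from the defining identity \eqref{marcusDefProp} of the Marcus SDE \eqref{eq:horlevy} applied to $f$ and convert the Stratonovich integral $\int_0^t H_if(U_{s-})\circ\d Y^i_s$ into It\^o form, $\int_0^t H_if(U_{s-})\,\d Y^i_s+\tfrac12[H_if(U),Y^i]^c_t$. To compute the bracket one applies \eqref{marcusDefProp} once more with $f$ replaced by $H_if\in C^\infty(P(u))$ (legitimate since completeness of the $H_x$ makes the flow $\exp(H_x)$ globally defined), reads off the continuous martingale part of $H_if(U)$ as $\int_0^t H_jH_if(U_{s-})\,\d Y^{j,c}_s$, and uses $\d[Y^{i,c},Y^{j,c}]_s=a^{ij}\,\d s$ together with the symmetry of $a$; this yields the term $\tfrac12 a^{ij}H_iH_jf$. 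Decomposing the integrator by its L\'evy--It\^o decomposition $\d Y^i_s=b^i\,\d s+\d Y^{i,c}_s+\int_{|x|<1}x^i\,\widetilde N(\d s,\d x)+\int_{|x|\ge 1}x^i\,N(\d s,\d x)$, where $N$ is the jump measure of $Y$ and $\widetilde N$ its compensation, and using linearity of $x\mapsto H_x$ (so $b^iH_i=H_b$), the It\^o integral becomes $\int_0^t H_bf(U_{s-})\,\d s$ plus martingale terms plus the large-jump sum $\int_0^t\int_{|x|\ge 1}x^iH_if(U_{s-})\,N(\d s,\d x)$, while the jump sum in \eqref{marcusDefProp} becomes $\int_0^t\int_{\RR^d\setminus\{0\}}\!\big(f(\exp(H_x)(U_{s-}))-f(U_{s-})-x^iH_if(U_{s-})\big)N(\d s,\d x)$.

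The step that requires care — and the main obstacle — is the regrouping and compensation of the jump terms, because $\nu$ may have infinite first moment, so the large-jump sum $\int_0^t\int_{|x|\ge 1}x^iH_if(U_{s-})\,N(\d s,\d x)$ cannot be compensated on its own. The fix is to first cancel the correction $-x^iH_if$ against the $\{|x|\ge 1\}$ part of the integrand coming from the Marcus jump sum, so that the contribution of $\{|x|\ge 1\}$ collapses to $\int_0^t\int_{|x|\ge 1}(f(\exp(H_x)(U_{s-}))-f(U_{s-}))\,N(\d s,\d x)$, whose integrand is bounded by $2\|f\|_\infty$ and hence is compensable since $\nu(\{|x|\ge 1\})<\infty$. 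On $\{|x|<1\}$ the integrand is $g(v,x):=f(\exp(H_x)(v))-f(v)-x^iH_if(v)$, and here the geometric input enters: since $H_x$ depends linearly on $x$, the map $(x,v)\mapsto\exp(H_x)(v)$ is jointly smooth with $\tfrac{d}{dt}f(\exp(tH_i)(v))\big|_{t=0}=H_if(v)$, so a Taylor estimate gives $|g(v,x)|\le C|x|^2$ for $|x|<1$, uniformly over $v$ (and this vanishes unless $v$ lies in the fixed compact set $\supp f\cup\bigcup_{|x|\le 1}\exp(H_{-x})(\supp f)$), whence $\int_{|x|<1}|x|^2\,\nu(\d x)<\infty$ makes this term compensable as well. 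After compensation, the two resulting drift contributions combine exactly into $\int_0^t\!\big(\int_{\RR^d\setminus\{0\}}(f(\exp(H_x)(U_{s-}))-f(U_{s-})-\ind(|x|<1)x^iH_if(U_{s-}))\,\nu(\d x)\big)\d s$, i.e. the integral term of \eqref{eq:generatorHorLevy}, and the remaining compensated integrals together with $\int_0^t H_if(U_{s-})\,\d Y^{i,c}_s$ form the martingale $M$ (square-integrable on the stopped intervals, by the same $|x|^2$-, boundedness-, and compact-support estimates).

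Finally I would observe that $\mathscr{L}_Uf$ as in \eqref{eq:generatorHorLevy} is bounded uniformly in $v$ (by $\|H_bf\|_\infty+\tfrac12\|a^{ij}H_iH_jf\|_\infty+C\!\int_{|x|<1}|x|^2\,\nu(\d x)+2\|f\|_\infty\nu(\{|x|\ge 1\})$) and continuous in $v$ (dominated convergence in the integral term). Taking expectations in $f(U^{\tau_n}_t)-f(v)=\int_0^{t\wedge\tau_n}\mathscr{L}_Uf(U_{s-})\,\d s+M_{t\wedge\tau_n}$ kills $M$; dividing by $t$ and letting $t\downarrow 0$ — using right-continuity of $U$ with $U_0=v$ (so $U_{s-}\to v$ as $s\downarrow 0$), $\tau_n>0$ a.s., and bounded convergence with the bounded continuous $\mathscr{L}_Uf$ — identifies $\mathscr{L}_Uf(v)$ as the limit; equivalently, the displayed decomposition shows $f(U_t)-\int_0^t\mathscr{L}_Uf(U_{s-})\,\d s$ is a martingale, so $\mathscr{L}_U$ is the generator of the Markov process $U$ (Markovianity by \cite[Thm.~5.1]{genStratonovich}) on $C_c^\infty(P(u))$. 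The jump-term bookkeeping in the previous paragraph is the only genuinely delicate part; the rest is routine stochastic calculus combined with the smoothness of geodesic flows.
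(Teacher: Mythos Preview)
Your proof is correct and follows essentially the same route as the paper: substitute the L\'evy--It\^o decomposition of $Y$ into the Marcus defining property~\eqref{marcusDefProp}, convert the Stratonovich integral to It\^o form, compensate the Poisson integrals, and read off the generator from the resulting drift-plus-martingale decomposition. The paper compresses your Stratonovich-to-It\^o and jump-compensation steps into the phrase ``after recalling the definition of Stratonovich integral and some algebraic manipulation''; your explicit Taylor estimate on $f(\exp(H_x)(v))-f(v)-x^iH_if(v)$ for small $|x|$ and the boundedness argument for large $|x|$ are precisely the checks that make this manipulation legitimate, so you have simply unpacked what the paper leaves implicit.
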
 

Recall that a \levy{} process $Y$ is $\Hol(u)$-invariant if the laws of processes $Y$ and $gY$ are the same for every $g\in\Hol(u)$. This holds if and only if the generating triplet
$(a,\nu,b)$ of $Y$ satisfies
\begin{equation} \label{cond:invariance}
\nu\text{ is }g\text{-invariant (i.e.\ }g_*\nu=\nu); \quad gag^\top=a;\quad    gb=b+\int_{\RR^d\backslash\{0\}}\left(\ind(\abs{g^{-1}x}<1)-\ind(\abs{x}<1)\right)\nu(\d x)
\end{equation}
for every $g\in \Hol(u)$, where the push-forward measure $g_*\nu$ is defined by $g_*\nu(A):=\nu(g^{-1}(A))$ for every Borel measurable 
set $A\in\mathfrak{B}(\RR^d)$, $a$ and $g$ are interpreted as a matrices in $\RR^d\otimes\RR^d$ and $^\top$ denotes matrix transpose.
Note that when $\Hol(u)$ is a subgroup of the orthogonal group $\OO(d),$ the condition on the drift reduces
to $gb=b.$ 

Since \levy{} process $X$ is given as a projection of a horizontal \levy{} process $U$, its
generator is a ``push-forward'' of a generator~\eqref{eq:generatorHorLevy} and invariance
conditions~\eqref{cond:invariance} ensure it is indeed a well-defined operator. 

\begin{prop} \label{prop:genX}
	The generator $\mathscr{L}_X$ of a \levy{} process $X$ on a smooth manifold $M$ with a connection is given by
	\begin{align} \label{eq:generatorX}
		\mathscr{L}_X(f)(p)&=H_b (f\circ \pi)(v)+\frac{1}{2}a^{ij}H_iH_j(f\circ \pi)(v)\\
		&+\int_{\RR^d\backslash\{0\}}\left( f(\Exp_p(vx))-f(p)-\ind(\abs{x}<1)x^iH_i (f\circ \pi)(v)\right) \nu(\d x), 	\nonumber
	\end{align}
	where $v$ is any frame (in the holonomy bundle $P(u)$) with $\pi(v)=p$ and $\Exp_p$ is a geodesic exponential map based at $p$. 
	In particular, \eqref{eq:generatorX} does not depend on the choice of $v$.
	The domain of the generator $\mathscr{L}_X$ comprises functions $f$ on $M$ such that $f\circ\pi$ is in the domain of the generator $\mathscr{L}_U.$
\end{prop}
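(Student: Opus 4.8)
The plan is to derive \eqref{eq:generatorX} directly from the relationship $X = \pi(U)$ together with the known generator \eqref{eq:generatorHorLevy} of the horizontal \levy{} process $U$. Since $X$ and $U$ are both time-homogeneous Markov (the former by Proposition~\ref{prop:wellDefLevyonman}, the latter by \cite[Thm.~5.1]{genStratonovich}), and for $f \in C_c^\infty(M)$ we have $f \circ \pi \in C^\infty(P(u))$, the natural first step is to apply the generator $\mathscr{L}_U$ to $f \circ \pi$ at a frame $v$ with $\pi(v) = p$ and show $\mathscr{L}_U(f \circ \pi)(v) = \mathscr{L}_X(f)(p)$. Concretely, I would argue that for bounded $g \in C^\infty(P(u))$ of the form $g = f \circ \pi$, one has $\E^v[g(U_t)] = \E^v[f(X_t)] = \E^p[f(X_t)]$ — the last equality being exactly the content of Markovianity of $X$ (this is where the invariance conditions \eqref{cond:invariance} are implicitly used, via Proposition~\ref{prop:wellDefLevyonman}, to make the expression independent of the chosen frame $v$ in the fibre over $p$). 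Differentiating at $t = 0$ gives $\mathscr{L}_X(f)(p) = \mathscr{L}_U(f \circ \pi)(v)$ whenever the right-hand side is defined; since $f \circ \pi$ is smooth but not compactly supported on $P(u)$ (the fibres are non-compact), I would note that formula \eqref{eq:generatorHorLevy} still applies to $f \circ \pi$ — either by a localization argument using a cut-off on $P(u)$ that equals $1$ on a neighborhood of the relevant fibre, or by directly checking that each of the three terms in \eqref{eq:generatorHorLevy} is well-defined for $f \circ \pi$ (the integral term being controlled because $f \circ \pi$ is bounded and $\nu$ integrates $|x|^2 \wedge 1$).

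The second step is to rewrite $\mathscr{L}_U(f \circ \pi)(v)$ using the geometry of fundamental horizontal vector fields. The drift and diffusion terms $H_b(f \circ \pi)(v) + \tfrac12 a^{ij} H_i H_j (f \circ \pi)(v)$ carry over verbatim. For the jump term, I would invoke the key identity relating the flow exponential of horizontal vector fields to the geodesic exponential map, namely $\pi(\exp(H_x)(v)) = \Exp_{\pi(v)}(v x)$ for $x \in \RR^d$ — this is precisely equation~\eqref{bothExponentials} referenced in Remark~\ref{remark:levyonman}(ii) and in the paragraph before it. Applying this inside the jump integral, $f(\exp(H_x)(v))$ — wait, $f$ is defined on $M$, so what appears is $(f \circ \pi)(\exp(H_x)(v)) = f(\pi(\exp(H_x)(v))) = f(\Exp_p(vx))$. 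Substituting yields exactly the jump term in \eqref{eq:generatorX}, with the compensator $\ind(|x|<1)x^i H_i(f \circ \pi)(v)$ unchanged.

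The final step is the claim of frame-independence: \eqref{eq:generatorX} does not depend on the choice of $v \in P(u)$ with $\pi(v) = p$. This follows because the left-hand side $\mathscr{L}_X(f)(p)$ manifestly does not involve $v$, so once the identity $\mathscr{L}_X(f)(p) = \mathscr{L}_U(f \circ \pi)(v)$ is established for one (hence every) such $v$, independence is automatic. Alternatively, and perhaps more cleanly to present, I would verify it directly: any two frames $v, v'$ over $p$ in $P(u)$ satisfy $v' = v g$ for some $g \in \Hol(u)$, and a change-of-variables $x \mapsto g^{-1} x$ in the integral together with the invariance conditions \eqref{cond:invariance} and the relations $(R_g)_* H_x = H_{g^{-1}x}$ (Lemma~\ref{lemma:changeHorizontal}), $H_x(vg) = (R_g)_* H_{gx}(v)$ — i.e. $(R_g)^* H_x = H_{g x}$ — show the three terms transform into one another and sum to the same value; the $g$-dependent correction to the drift in \eqref{cond:invariance} is exactly what is needed to absorb the change in the cut-off $\ind(|g^{-1}x|<1)$ versus $\ind(|x|<1)$.

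The main obstacle I anticipate is the rigorous justification of differentiating $\E^p[f(X_t)]$ at $t=0$ to recover the generator in the stated form — i.e. showing that the expression \eqref{eq:generatorX} really is the infinitesimal generator on the core $C_c^\infty(M)$, with the requisite domain and uniform-convergence properties — and, relatedly, handling the non-compact support of $f \circ \pi$ on $P(u)$ when transferring \eqref{eq:generatorHorLevy}. Both are essentially localization/uniform-integrability issues: one needs that $X$ (equivalently $U$, which is a strong solution of a Marcus SDE with \levy{} integrator) does not leave a large compact set too quickly, so that the cut-off does not affect the $t \to 0$ limit, and that the jump integral converges appropriately. I expect this to be routine given the Marcus SDE framework and the \cadlag{} semimartingale property from Proposition~\ref{prop:wellDefLevyonman}, but it is the step requiring the most care.
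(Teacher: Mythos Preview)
Your proposal is correct and matches the paper's proof in substance. The only difference is emphasis: the paper leads with what you call the ``alternative''---a direct computation showing $\mathscr{L}_U(f\circ\pi)(vg)=\mathscr{L}_U(f\circ\pi)(v)$ for $g\in\Hol(u)$ via Lemmas~\ref{lemma:changeexponential} and~\ref{lemma:changeiterated} and the invariance conditions~\eqref{cond:invariance}---and then deduces that $\mathscr{L}_U(f\circ\pi)$ descends to a smooth function on $M$ (since $\pi$ is a submersion), which must be $\mathscr{L}_X(f)$; you instead lead with the Markov-property identity $\mathscr{L}_X(f)(p)=\mathscr{L}_U(f\circ\pi)(v)$ and treat the direct computation as optional. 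The paper is also silent on the compact-support issue you flag, simply applying~\eqref{eq:generatorHorLevy} to $f\circ\pi$ without comment.
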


\begin{remark}
	In special cases the formula in~\eqref{eq:generatorX} can be simplified.
	\begin{enumerate}[(i)] \label{remark:simplifyGenerator}
		\item Suppose that $-I_d\in\Hol(u),$ where $I_d$ is the identity matrix in $\RR^d\otimes\RR^d.$
		Then $-v\in P(u)$, and hence $H_i(f\circ \pi)(-v)=-H_i(f\circ \pi)(v)$,  and~\eqref{cond:invariance} implies $b=0.$ Assume further that $a=0.$ Then by~\eqref{eq:generatorX} applied with $v$ and $-v$ yields
		\begin{align}
			\mathscr{L}_X(f)(p)&=\frac{1}{2}\int_{\RR^d\backslash\{0\}}\left( f(\Exp_p(vx))-f(p)-\ind(\abs{x}<1)x^iH_i (f\circ \pi)(v)\right) \nu(\d x) 
			\nonumber \\
			&+\frac{1}{2}\int_{\RR^d\backslash\{0\}}\left( f(\Exp_p(-vx))-f(p)+\ind(\abs{x}<1)x^iH_i (f\circ \pi)(v)\right) \nu(\d x)\nonumber \\
			&=\frac{1}{2}\int_{T_pM}\left(f(\Exp_p(y))-2f(p)+f(\Exp_p(-y))\right)\nu_p(\d y), \label{eq:expressJumpGenerator}
		\end{align}
	where the push-forward measure $\nu_p:=v_*\nu$ is independent of $v$ by~\eqref{cond:invariance}, implying the expression for the jump part of the generator obtained in~\cite{ApplebaumRosieL2}.
	There are many examples of Riemannian manifolds with a Levi-Civita connection where $-I_d\in\Hol(u),$ e.g.\ when $d$ is even and $\Hol(u)=\SO(d)$ is the special orthogonal group.  
	There also exist manifolds with a connection not induced by the Riemannian metric such that $-I_d\in\Hol(u),$ see Section~\ref{section:cylinder}.
	\item In the case the stochastic anti-development \levy{} process $Y$ is $\OO(d)$-invariant
	in the sense of~\eqref{cond:invariance}, the generating triplet equals $(\alpha I_d,\nu,0)$ for some $\alpha\ge 0$ and an $\OO(d)$-invariant \levy{} measure $\nu$. 
	If $M$ is a Riemannian manifold equipped with the Levi-Civita connection, the holonomy bundle is a subbundle of the orthonormal frame bundle and the horizontal
	Laplace operator satisfies $\sum_{i=1}^dH_i^2(f\circ\pi)(v)=\Delta_Mf(p)$ for any orthonormal frame $v$ in the fibre above $p$,
	where $\Delta_M$ is the Laplace-Beltrami operator on $M$. Thus~\eqref{eq:generatorX} and~\eqref{eq:expressJumpGenerator} imply
	$$\mathscr{L}_X(f)(p)=\frac{\alpha}{2}\Delta_Mf(p)+\frac{1}{2}\int_{T_pM}\left(f(\Exp_p(y))-2f(p)+f(\Exp_p(-y))\right)\nu_p(\d y),$$
	which equals the generator of an isotropic \levy{} process in~\cite{isotropiclevy}, where the improper integrals in the jump part were used implicitly in \cite[Eq.~(3.6)]{isotropiclevy}.
	\end{enumerate}
\end{remark}

For the formula~\eqref{eq:generatorX} to be of use, 
it is crucial to prove that any Markov process $X$ on $M$ with the generator 
$\mathscr{L}_X$ is indeed a \levy{} process on $M$ satisfying Definition~\ref{def:levyonman}.  
Put differently,
we need to construct an $\RR^d$-valued \levy{} process $Y$ such that $X$ can be represented via the Marcus SDE in Definition~\ref{def:levyonman}.
Typically, we will not be able to construct $Y$ on the same probability space on which $X$ is defined and we will need to consider an extended probability space.
Additionally, the filtration
$(\mathcal{H}_t)_{t\in \RR_+}$, to which $X$ is adapted, will also have to be extended to a larger filtration
$(\mathcal{G}_t)_{t\in \RR_+}$ and this will constitute a strong Markov extension (see~\cite[Def.~2.47]{CJrepr}).
Existence of the appropriate process $Y$ on the extended probability space is the content of the following theorem.

\begin{thm} \label{thm:charLevyManifolds}
	Let $X$ be a Markov process on $M$ adapted to $(\mathcal{H}_t)_{t\in \RR_+}$ with a generator $\mathscr{L}_X$ given by~\eqref{eq:generatorX}. Then there exists an extended probability space and
	on it an $\RR^d$-valued \levy{} process $Y$ adapted to a an extended filtration $(\mathcal{G}_t)_{t\in \RR_+}$ which
	forms a strong Markov extension, such that $X=\pi(U)$ where $U$ is a horizontal \levy{}
	process solving Marcus SDE~\eqref{eq:horlevy}.
\end{thm}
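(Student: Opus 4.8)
The plan is to reverse the construction in Definition~\ref{def:levyonman}: starting from the Markov process $X$ with generator $\mathscr{L}_X$, we must manufacture a horizontal \levy{} process $U$ (hence its \levy{} anti-development $Y$) whose projection under $\pi$ recovers $X$. The generating triplet $(a,\nu,b)$ appearing in \eqref{eq:generatorX} is the natural candidate triplet for $Y$, and one checks directly from \eqref{eq:generatorX} that it satisfies the invariance conditions \eqref{cond:invariance} for the holonomy group $\Hol(u)$ — this is exactly what makes $\mathscr{L}_X$ well-defined as an operator on $M$ (Proposition~\ref{prop:genX}). So fix a \levy{} process $\widehat Y$ on $\RR^d$ with this triplet (possibly on an enlarged probability space), and let $\widehat U$, $\widehat X=\pi(\widehat U)$ be the associated horizontal \levy{} process and its projection. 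By Lemma~\ref{lemma:genhorlevy} and Proposition~\ref{prop:genX}, $\widehat X$ has generator $\mathscr{L}_X$, so $X$ and $\widehat X$ agree in law; but this only gives a \levy{} process on a \emph{different} space, not the representation $X=\pi(U)$ on the \emph{given} space. The real work, flagged in the introduction, is the pathwise construction of a stochastic horizontal lift of the \emph{given} $X$.

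The strategy, following the discussion around Theorem~\ref{thm:uniqueLift}, is to append ``jump data'' to $X$. A jump $X_{s-}\to X_s$ of the prescribed form $X_s=\Exp_{X_{s-}}(v\,x)$ does not determine $x\in\RR^d$ (nor the frame-path), because geodesics between two points are not unique; the jump measure $\nu$ on $\RR^d$ only pushes forward to the geometric jump measure on $M$. I would therefore enrich $X$ to a marked point process on $M\times(\RR^d\setminus\{0\})$ whose compensator, in the second coordinate, is $\nu(\d x)$, and such that a mark $x$ at time $s$ is compatible with the jump of $X$ in the sense $X_s=\Exp_{X_{s-}}(U_{s-}x)$ for the lift $U$ being built. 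Concretely: solve the horizontal Marcus ODE \eqref{marcusDefProp}/\eqref{bothExponentials} pathwise between jumps (the continuous, Stratonovich part lifts canonically along the holonomy bundle since the continuous martingale part of $X$ has a well-defined anti-development — this is the classical stochastic development, cf.\ \cite{horizontalLift}), and at each jump time $s$ use the mark $x$ to set $U_s=\exp(H_i x^i)(U_{s-})$, which by \eqref{bothExponentials} projects to the correct jump of $X$. One then reads off $Y$ as the anti-development: $\d Y_t$ equals the anti-developed continuous part plus $\sum_{s\le t}x(s)\delta_s$ in the jump part. Theorem~\ref{thm:uniqueLift} guarantees that $(X,\text{jump data})$ determines $U$ and $Y$ uniquely, so the construction is consistent.

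It then remains to verify that the $Y$ so obtained is genuinely a \levy{} process with triplet $(a,\nu,b)$ — i.e.\ has stationary independent increments. The continuous-martingale part of $Y$ has deterministic quadratic variation $a\,t$ by the form of the diffusion term in $\mathscr{L}_X$; the jump part is a Poisson random measure with intensity $\d t\otimes\nu(\d x)$ provided the jump data were chosen with the right compensator; and the drift is pinned down by the first-order term in \eqref{eq:generatorX} together with \eqref{cond:invariance}. Independence and stationarity of increments then follow from the Markov property of $X$ (equivalently, from the fact that the generator of $U$, computed via the lift, is the time-homogeneous operator \eqref{eq:generatorHorLevy}), using a martingale-problem / uniqueness argument: $(U,Y)$ solves a well-posed martingale problem whose solution is, by Proposition~\ref{prop:MarcusWellDef} and \cite[Thm.~5.1]{genStratonovich}, exactly a horizontal \levy{} process driven by a \levy{} process with the given triplet.

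The main obstacle is the pathwise lift at the jumps: showing that one can \emph{measurably and adaptedly} select marks $x(s)\in\RR^d$ realizing the prescribed geometric jumps of $X$ while simultaneously arranging that the resulting marked point process has compensator $\d t\otimes\nu(\d x)$ — this is where non-uniqueness of geodesics bites, and where Theorem~\ref{thm:uniqueLift} (the ``jump data'' formalism) does the heavy lifting. Handling the accumulation of small jumps (the $\ind(|x|<1)$ compensation) uniformly along the lift, so that $U$ remains a semimartingale on the maximal interval $[0,\tau)$ and $Y$ is a bona fide \levy{} process rather than merely having \levy{}-type generator, is the delicate quantitative point.
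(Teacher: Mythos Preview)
Your outline follows the paper's route: construct jump data for $X$, invoke Theorem~\ref{thm:uniqueLift} to obtain the horizontal lift $U$ and anti-development $Y$, then identify the semimartingale characteristics of $Y$ as $(a,\nu,b)$. Two points, however, are not handled and the paper treats both with some care.

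First, your jump marks are specified by the relation $X_s=\Exp_{X_{s-}}(U_{s-}x)$, which is circular: $U_{s-}$ depends on all previous marks, including the accumulating small ones, so you cannot ``measurably and adaptedly select'' $x(s)$ before $U$ exists. The paper breaks this circularity by fixing a measurable section $q\colon M\to P(u)$ and recording jumps intrinsically as $J_s\in\RR^d$ with $X_s=\Exp_{X_{s-}}(q(X_{s-})J_s)$; the conversion $\Delta W_s=U_{s-}^{-1}q(X_{s-})J_s$ then happens \emph{inside} the SDE of Theorem~\ref{thm:uniqueLift}, where $U_{s-}$ is already available. Moreover, to force the compensator of $J$ to be exactly $\d t\otimes\nu(\d x)$ one must (i)~randomize over the fibre $\{z:\Exp_{X_{s-}}(q(X_{s-})z)=X_s\}$ at visible jumps (this you mention) and (ii)~\emph{add} atoms at times where $\Exp_{X_{s-}}(q(X_{s-})z)=X_{s-}$ for some $z\neq 0$ --- ``invisible'' jumps of $J$ that produce no jump of $X$. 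Step~(ii) requires an independent Poisson source on an enlarged space and is essential; without it the compensator falls short of $\d t\otimes\nu$ on the set $\{z:k(X_{t-},z)=X_{t-}\}$.

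Second, and more seriously, you omit the extension of $Y$ beyond the lifetime $\tau$ of $X$. Theorem~\ref{thm:uniqueLift} produces $Y$ only on $[0,\tau)$, and having the right characteristics on $[0,\tau)$ does not automatically give a \levy{} process on $\RR_+$: one must show that the continuous local martingale part of $Y$ has an a.s.\ limit as $t\uparrow\tau$ (so that it can be spliced with an independent Brownian motion) and similarly patch the jump measure with an independent Poisson random measure after $\tau$. The paper isolates this as a separate lemma; your martingale-problem argument does not address it.
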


\subsection{Construction of horizontal lifts and anti-developments for discontinuous processes} \label{generalHorLift}

As mentioned above, the proof of Theorem~\ref{thm:charLevyManifolds} requires
us to construct an integrator of Marcus SDE~\eqref{eq:horlevy}, such that the
solution of this Marcus SDE is projected into a given process. In order to highlight the difficulties arising due
to the presence of jumps, it is natural to consider such
a construction in a more general setting
of horizontal lifts and stochastic anti-developments of general
semimartingales. 
Theorem~\ref{thm:uniqueLift} below, which resolves these problems and we hope is of independent interest, 
is a key step in the proof of Theorem~\ref{thm:charLevyManifolds}.

\begin{defn} \label{def:horprocess}
	An $F(M)$-valued semimartingale $U$ on $[0,\tau)$ is said to be \emph{horizontal} if there exists an
	$\RR^d$-valued semimartingale $W=(W_t)_{t\in\RR_+}$ with $W_0=0,$ such that $U$ is a solution of a Marcus SDE 
	\begin{equation}\label{def:horDisCont}
	\d U_t=H_i(U_{t-})\diamond \d W^i_t
	\end{equation}
	on the stochastic interval $[0,\tau).$
	The semimartingale $W$ is then called a \emph{stochastic anti-development} of $U$ (or of its projection $X=\pi(U)$).  Additionally, if $X$ is an $M$-valued semimartingale on $[0,\tau)$, then a horizontal
	semimartingale $U$ is called a \emph{stochastic horizontal lift} of $X$ if $\pi(U)=X.$
\end{defn}

\begin{remark}
	\begin{enumerate}[(i)]
		\item If $W$ in Definition~\ref{def:horprocess} is in addition  a \levy{} process on $\RR^d$, we obtain Definition~\ref{def:horLevy}.
		\item Only times up to $\tau$ are used in Marcus SDE~\eqref{def:horDisCont} so it suffices for $W$ to be semimartingale defined only on $[0,\tau).$
	\end{enumerate}
\end{remark}

If $X$ is a continuous $M$-valued semimartingale on $[0,\tau),$ then 
the stochastic anti-development and the stochastic horizontal lift of $X$ always exist and are unique (on $[0,\tau)$) once the initial
frame $U_0$ satisfying $\pi(U_0)=X_0$ has been chosen \cite[Sec.~2.3]{hsumanifolds}.
However, in the discontinuous case, the uniqueness of the stochastic anti-development and the existence of the stochastic
horizontal lift may fail.  
To illustrate some potential issues 
posed by jumps, we first focus on the stochastic anti-development of a horizontal semimartingale $U$, which exists by definition. Uniqueness, however, is related to whether
at a jump time $s$ of $U$ there exists a unique fundamental horizontal vector field (and hence a unique jump of the integrator corresponding to it), whose flow exponential map connects
the two endpoints $U_{s-}$ and $U_s$ of the jump. A simple example where uniqueness fails is when $M=\Sp^1:=\set{z\in \RR^2}{\abs{z}=1}$ is a unit circle equipped with the Levi-Civita connection induced by the standard Riemannian metric of the ambient space $\RR^2$.
Then one easily sees that $F(\Sp^1)$ is actually diffeomorphic to $\Sp^1\times
(\RR\backslash\{0\})$ and the fundamental horizontal vector field is
$H_1=\left(\frac{d}{d \theta},0\right).$ Hence $\exp(2\pi k H_1)(u)=u$ for
$k\in\Z$, implying that  no jump occurs in the case where the
integrator has a jump of a size $2\pi k$. We can then take an arbitrary real
semimartingale $W$ and let $U$ be a horizontal semimartingale satisfying $\d
U_t=H_1(U_{t-})\diamond \d W_t$. But then $U$ is also a solution of a Marcus
SDE
$\d U_t=H_1(U_t)\diamond \d \widetilde{W}_t$, where $\widetilde{W}=W+2\pi N$ and $N$ is a Poisson process. In particular, this shows that a continuous horizontal process may
have a discontinuous stochastic anti-development (its continuous stochastic anti-development is unique). 

On the other hand, the stochastic horizontal lift of a semimartingale $X$ need
not exist. Jumps of a horizontal semimartingale occur only along integral
curves of fundamental horizontal vector fields, hence jumps of its projection
can only occur along geodesics i.e.\ the end-point of a jump has to be given as a
point in the image of a geodesic exponential map based at the start point of the
jump. But it could be the case that between the two endpoints $X_{t-}$ and
$X_t$ of the jump
there does not exist a geodesic. 
For example, we can take as our manifold a punctured plane
$\RR^2\backslash\{0\}$, which is an open subspace of the Euclidean plane, and
since there are no geodesics
(straight lines) between $x$ and $-x$, a process making such jumps cannot have
a stochastic horizontal lift.

Furthermore, even when suitable geodesics exist, they might not be unique,
implying the non-uniqueness of horizontal lifts. A typical example is given by
the two antipodal points on a sphere and there exists a whole one parameter
family of possible geodesics (along great circles). Even between two arbitrary
points on a sphere one could travel several times around the sphere along a
great circle before stopping at the endpoint,
thus breaking uniqueness. Another similar issue related to non-uniqueness of
geodesics is that even when we observe no jump of a semimartingale, we could
still sometimes force its horizontal lift to jump in a way that would not manifest itself
as a jump after projection. 

Under suitable assumptions, these issues can be dispensed with. In the setting
of Riemannian manifolds, \cite{horizontalLift} considered 
semimartingales such that there exists a unique geodesic between the two end
points of any jump. They were able to show that for such processes the
stochastic horizontal lift exists and is unique once the initial frame is
fixed, and the same is true for the stochastic anti-development.
As mentioned in the introduction, the assumption of unique geodesics is quite strict, excluding
compact Riemannian manifolds, such as spheres and projective spaces. 
In order to deal with general smooth manifolds we provide an alternative
approach based on Theorem~\ref{thm:uniqueLift} below. 

The main issues when constructing horizontal lift and anti-development of a
semimartingale $X$ concern the jumps of $X$. It is not sufficient to simply
know that a jump occurred at some time $s$, but we also need to know how it
happened, i.e.\ which specific geodesic was used to induce the jump. Geodesics
can be parametrized via a geodesic exponential map, hence we need to specify a
tangent vector $v\in T_{X_{s-}}M$ such that $X_s=\Exp_{X_{s-}}(v)$ holds.
Typically, we cannot uniquely determine such a vector from $X$ alone, hence
this ``jump data''
needs to be appended to the semimartingale $X.$
Note that such ``jump data'' will typically have to be defined on an extended probability space and will be adapted to an
extended filtration $(\mathcal{G}_t)_{t\in\RR_+}$ to which $X$ will still be adapted.

In order to record the necessary ``jump data'',  
fix a holonomy bundle $P(u)$ with a holonomy group $\Hol(u)$ and
consider some (global) section $q\colon M\to P(u)$,
i.e.\ a map $q$ satisfying $\pi\circ q=\mathrm{id}_M$. We require that $q$ is measurable and that $q$ maps any compact set in $M$ into a pre-compact set in $P(u)$.
These requirements would hold trivially if $q$ were smooth, but
in general it is not possible to construct even a continuous section. However, the fibre bundle structure
allows us to find smooth local sections. Combining countably many such local
sections we can always find a measurable (global) section $q$ satisfying compactness condition. Fixing one such section 
allows us to record the ``jump data'' as an $\RR^d$-valued process.
\begin{defn} \label{def:SemiJump}
	An $\RR^d$-valued process $J$ An $\RR^d$-valued process $J$ (possibly) on an extended probability space and adapted to filtration
	$(\mathcal{G}_t)_{t\in\RR_+}$ is said to \emph{represent the jumps of a semimartingale $X$ on $[0,\tau)$} if 
	\begin{equation} 
\label{jumpsemiBound}
		X_s=\Exp_{X_{s-}}(q(X_{s-})J_s)\>\>\text{for } s<\tau \quad 
	\text{		and}\quad
	\sum_{u\le t} \abs{J_u}^2 < \infty\>\> \text{for } t\in\RR_+,
	\end{equation} 
	for a measurable global section $q\colon M \to P(u)$ mapping compact sets in $M$ into pre-compact sets in $P(u)$. 
\end{defn}
 
\begin{remark} \label{remark:defineJ}
	\begin{enumerate}[(i)]
		\item It would suffice for a process $J$ to be defined only on $[0,\tau)$, but it turns out to be convenient in proofs to have it defined for all times in $[0,\infty).$
		\item 
		\label{cond:remAnti} If we start with an anti-development $W$ and use it to construct the processes $U$ and $X$, we may then represent the jumps of $X$ by a process $J_s :=q(X_{s-})^{-1}U_{s-}\Delta W_s.$ Processes $U$ and $X$ are \cadlag{}, hence locally
		bounded, meaning that for every $t\in\RR_+$ and a.e.\ $\omega$ the paths $X(\omega)$ and $U(\omega)$ are included in compact sets for all times in $[0,t]$, hence by compactness condition on $q$ also matrices
		$q(X_{s-}(\omega))^{-1}U_{s-}(\omega)$ are included in a compact set for $s\in[0,t]$, so that these matrices are bounded in operator norm. 
		Since jumps of a semimartingale $W$ are square summable we thus immediately obtain the summability condition in~\eqref{jumpsemiBound}.
		Hence it is natural to require this summability condition for any process $J$ in Definition~\ref{def:SemiJump}.
		Note also that any such $J$ may be different from $0$ for at most countably many times $s$.
		\item For a general semimartingale $X$ there might not exist any process $J$ which represents its jumps. However, \eqref{cond:remAnti} above shows that if $X$ has an anti-development, such
		a process always exists. Actually, Theorem~\ref{thm:uniqueLift} below shows that being able to represent jumps of $X$ by some process $J$ is equivalent to $X$ having a stochastic anti-development.
		The property of $X$ having a stochastic anti-development does not depend on the choice of a section.
		Moreover, this means that if a pair $(q,J)$ satisfies~\eqref{jumpsemiBound}, then for any other measurable section $\widetilde{q}$ satisfying compactness condition we can define a process
		$\widetilde{J}_s:=\widetilde{q}(X_{s-})^{-1}U_{s-}\Delta W_s$ (where $U$ and $W$ are as in \eqref{cond:remAnti}) so that $(\widetilde{q},\widetilde{J})$ satisfies~\eqref{jumpsemiBound}, including the summability condition.
		This shows that for a semimartingale $X$ the property of being able to represent its jumps as in Definition~\ref{def:SemiJump}
		does not depend on the choice of a measurable section satisfying compactness condition.
		Furthermore, the results of Theorem~\ref{thm:uniqueLift} can be obtained for any choice of such section.
	\end{enumerate}
\end{remark}

When constructing a horizontal lift $U$ and anti-development $W$ of $X=(X_t)_{t\in[0,\tau)}$, the process $J$ will enable us to uniquely determine the corresponding jumps of $U$ and $W$. More precisely, by
Definition~\ref{def:SemiJump}, Remark~\ref{remark:defineJ} and equation~\eqref{bothExponentials} we should have

\begin{equation} \label{jumpsforlift}
U_s=\exp\left(H_{U^{-1}_{s-}q(X_{s-})J_s}\right)(U_{s-})
\end{equation} and

\begin{equation} \label{jumpsforanti}
	\Delta W_s=U_{s-}^{-1}q(X_{s-})J_s,
\end{equation} 
for all (jump) times $s<\tau$. 

Thus, we are able to start with $X$ and $J$ and construct a unique horizontal lift and anti-development.

\begin{thm} \label{thm:uniqueLift}
	Let $X$ be a $M$-valued semimartingale defined on $[0,\tau)$ along with a process $J$ which represents its jumps.
	Let $u_0\in\pi^{-1}(\{X_0\})\cap P(u)$.
	Then there exists a unique horizontal lift $U=(U_t)_{t\in[0,\tau)}$ of $X$ with $U_0=u_0$ and values in
	$P(u)$ satisfying \eqref{jumpsforlift}. It has a unique anti-development $W=(W_t)_{t\in[0,\tau)}$ satisfying \eqref{jumpsforanti}.
\end{thm}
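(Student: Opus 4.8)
The plan is to build the horizontal lift $U$ and the anti-development $W$ by interlacing: first solve on the complement of the (countably many) jump times, then insert the jumps prescribed by $J$, and finally glue the pieces together while controlling accumulation of jump times. Concretely, I would proceed as follows. \emph{Step 1 (decompose $X$).} Since $X$ is a semimartingale on $[0,\tau)$, its jumps are summable in the sense $\sum_{s\le t}\abs{\Delta(f(X_s))}^2<\infty$ for each $f\in C^\infty(M)$; together with the second condition in \eqref{jumpsemiBound} this lets me enumerate the ``large'' jumps and treat the ``small'' jumps via a limiting argument. \emph{Step 2 (continuous case as a black box).} On any open interval free of large jumps, $X$ behaves like a semimartingale whose jumps are controlled, and I would invoke the continuous-case construction of \cite[Sec.~2.3]{hsumanifolds} — extended to allow the small-jump part — to produce the horizontal lift and anti-development there, uniquely given the initial frame. \emph{Step 3 (insert a jump).} At a jump time $s$ I am forced by \eqref{jumpsforlift} to set $U_s=\exp(H_{U_{s-}^{-1}q(X_{s-})J_s})(U_{s-})$; I must check this is a well-defined element of $P(u)$ (it is, because $H_x$ is tangent to $P(u)$ and $P(u)$ is geodesically complete, so the flow exponential is defined), that it projects correctly, i.e. $\pi(U_s)=\Exp_{X_{s-}}(q(X_{s-})J_s)=X_s$ by \eqref{bothExponentials} and \eqref{jumpsemiBound}, and that $\Delta W_s:=U_{s-}^{-1}q(X_{s-})J_s$ is square-summable, which is exactly the hypothesis on $J$.

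\emph{Step 4 (assembly and the Marcus property).} I would then define $U$ globally by concatenating the continuous pieces and the inserted jumps, define $W$ by adding up the jump increments $\Delta W_s$ to the continuous anti-development, and verify that the resulting $(U,W)$ satisfies the Marcus SDE \eqref{def:horDisCont} on $[0,\tau)$. The verification amounts to checking the defining identity \eqref{marcusDefProp}: on the continuous part this is the classical Stratonovich-lift identity, and the jump sum in \eqref{marcusDefProp} reproduces precisely the jumps inserted in Step 3, by construction. Here one must be careful that the jump times do not accumulate before $\tau$ in a way that breaks the semimartingale property; this is handled by the announcing sequence $\tau_n\uparrow\tau$ and the square-summability of $\{\abs{J_s}^2\}$ and of the small jumps of $X$, which guarantee that on each $[0,\tau_n]$ only finitely many ``large'' jumps occur and the small ones are absorbed into the continuous construction via a convergence argument.

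\emph{Step 5 (uniqueness).} For uniqueness, suppose $U'$ is another horizontal lift of $X$ with $U'_0=u_0$ satisfying \eqref{jumpsforlift}. Its anti-development $W'$ (which exists by Definition~\ref{def:horprocess}) has jumps forced to be $\Delta W'_s=(U'_{s-})^{-1}\,\big(\text{endpoint data}\big)$; using \eqref{jumpsforlift} and the injectivity of $x\mapsto\exp(H_x)(v)$ \emph{on the relevant domain} — which is where $J$ does its job, pinning down \emph{which} geodesic/tangent vector realizes the jump — one shows $\Delta W'_s=\Delta W_s$ at every jump time, and then on the continuous parts $U'$ and $U$ solve the same (continuous) horizontal SDE with the same initial condition, so they agree by the uniqueness in the continuous case. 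Propagating this across the pieces (again using the announcing sequence) gives $U'=U$ on $[0,\tau)$, and then $W'=W$ because $W,W'$ start at $0$, have the same jumps, and the same continuous parts.

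The main obstacle I expect is \emph{not} the jump bookkeeping per se but controlling the small-jump part: one cannot simply insert all jumps ``by hand'' since they may be infinitely many on a finite interval, so Step 2 must genuinely incorporate the compensated small jumps into a continuous-type construction, and one needs a stability/convergence estimate showing that truncating the small jumps of $X$ at level $\varepsilon$ yields horizontal lifts converging (locally uniformly in probability, say) as $\varepsilon\to0$ to the claimed $U$. Establishing that the limit still satisfies \eqref{marcusDefProp} — i.e. that the Marcus correction term passes to the limit — is the technical heart of the argument; once that is in place, the finitely-many-large-jumps interlacing in Steps 3--4 and the uniqueness in Step 5 are comparatively routine, relying only on the completeness of the horizontal vector fields and the defining property \eqref{jumpsforlift} of the appended jump data $J$.
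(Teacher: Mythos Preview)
Your proposal has a genuine gap at Step~2, which you yourself flag as ``the technical heart'' but do not fill. You propose to lift $X$ on intervals between large jumps by invoking the continuous-case construction ``extended to allow the small-jump part'', and later suggest doing this via a truncation $\varepsilon\to 0$. The problem is that there is no coordinate-free way to \emph{remove} the small jumps from a manifold-valued semimartingale: you cannot subtract jumps on $M$, and if you zero out the small entries of $J$ the remaining $J^\varepsilon$ no longer represents the jumps of $X$ on those intervals (since $X$ still jumps there), so the hypothesis \eqref{jumpsemiBound} fails and there is nothing to feed into an inductive construction. Conversely, if you try to define an approximant $X^\varepsilon$ with only the large jumps, you are already constructing a development of a piecewise process on $M$, which presupposes the very lifting machinery you are trying to build. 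A stability theorem for Marcus-SDE solutions on manifolds under perturbation of the driver could in principle rescue the scheme, but that is itself a substantial result you would have to prove, and your sketch gives no indication of how.

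The paper's proof takes a different route that sidesteps interlacing entirely. In a local chart it writes down a single explicit SDE for the frame component $R^k_m$ of the lift --- a Stratonovich integral against $\varphi(X)$ plus one sum over \emph{all} jump times encoding the Marcus corrections --- and uses a second-order Taylor expansion of the flow exponential of $H_c$ in coordinates to show that this jump sum is absolutely convergent (this is where the square-summability of $J$ is used). The coefficients are linear in $R$, hence Lipschitz, so standard semimartingale-SDE theory gives existence and uniqueness directly, with no $\varepsilon$-limit needed; the anti-development $W$ is then written down by an explicit formula and the Marcus identity is verified on coordinate functions. Uniqueness is also argued differently: a competing lift is written as $V=Ug$ for a $\GL(d)$-valued process $g$, the jump constraint \eqref{jumpsforlift} forces $g_s=g_{s-}$ so $g$ is continuous, and an It\^o-formula comparison then yields $\d g\equiv 0$. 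Note that your own uniqueness sketch is circular as stated: deducing $\Delta W'_s=\Delta W_s$ from \eqref{jumpsforanti} requires $U'_{s-}=U_{s-}$, which is precisely what is at issue on the interval preceding $s$.
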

\begin{remark}
	Both $U$ and $W$ are adapted to filtration $(\mathcal{G}_t)_{t\in\RR_+}$ to which $X$ and $J$ were adapted to start with.
\end{remark}

As a corollary we may recover the results of \cite[Thm.~3.2 and Prop.~4.3]{horizontalLift}. When there exists a unique geodesic between any two endpoints of a jump of a semimartingale $X$, then there exists a
unique\footnote{Process $J$ is uniquely determined only on $[0,\tau)$, but those are the only values used in
	construction of horizontal lift and anti-development in Theorem~\ref{thm:uniqueLift}} process $J$ which represents its
jumps and hence we obtain a unique horizontal lift and a unique anti-development.

\section{Examples} \label{examples}
In this section we show how Definition~\ref{def:levyonman} relates to previous definitions of \levy{} processes on Riemannian manifolds and on Lie groups. We also provide some concrete examples which illustrate different definitions and (slight) differences between them.

\subsection{Riemannian manifolds} \label{sec:LevyOnRiem}

On a Riemannian manifold $M$ there exists a canonical Levi-Civita connection uniquely determined by the metric.
It is easy to see that the associated fundamental horizontal vector fields are then also
vector fields on the orthonormal frame bundle $O(M)$ which consists of orthonormal frames $u\colon\RR^d\to T_{\pi(u)}M$, i.e.\ frames which are isometries. Thus, the horizontal \levy{} process -- a solution of a Marcus
SDE~\eqref{eq:horlevy} -- stays on the orthonormal frame bundle if it is started there. In \cite{isotropiclevy} they consider such horizontal \levy{}
processes and then their projections to $M$ are Markov if the integrator $Y$ is a \levy{} process invariant by the full orthogonal group $\OO(d)$. In this case they call the projections isotropic \levy{} processes on the Riemannian manifold $M$. 
 
However, it is often possible to reduce the orthonormal frame bundle even further, and hence require less invariance. In the Riemannian case such a reduction  was first considered in \cite[Sec.~3]{mohariReduction}. Namely, compatibility of the Levi-Civita connection with the metric means 
that if $u$ is some (any) fixed orthonormal frame, then the holonomy bundle $P(u)$ is a subbundle of $O(M)$ and the holonomy group $\Hol(u)$ is a Lie subgroup of $\OO(d)$. Thus, for a fixed orthonormal frame $u$ the
\levy{} processes from Definition~\ref{def:levyonman} require less invariance of the integrator than isotropic \levy{} processes, and hence the latter are just a special case of the former. 

Nevertheless,
there are still examples where using a holonomy bundle reduction does not extend the class of processes. For a simple example we may consider a $d$-dimensional sphere $\Sp^d:=\set{x\in\RR^{d+1}}{\abs{x}=1}$ equipped with a standard round
metric, i.e.\ the metric inherited by the standard Euclidean metric of the ambient space $\RR^{d+1}.$ Then it is not hard to see that the holonomy group is the special orthogonal group $\SO(d)$, hence isotropic \levy{} processes exhaust all \levy{} processes on a sphere. 

Still, holonomy bundle reduction often extends the class of \levy{} process and to see the difference between only isotropic \levy{} processes and all possible \levy{} processes from
Definition~\ref{def:levyonman} we now analyse a couple of examples on flat Riemannian manifolds (meaning the
metric is locally isometric to the standard Euclidean one) where isotropic \levy{} processes form a strictly smaller class of processes than all the possible processes from Definition~\ref{def:levyonman}.

\subsubsection{Tori} 
Let $n\in\N$ and consider the action of a discrete
group $\Z^n$ on $\RR^n$ given by 
$$\varphi\colon \RR^n\times \Z^n \to \RR^n, \, \varphi(x,z)=x+z.$$
The action is free and proper so the Quotient manifold theorem \cite[Thm.~21.10]{leeIntro} implies that the quotient $\T^n:=\RR^n/\Z^n$ has a unique
smooth structure making it into a smooth manifold of dimension $n$, such that the quotient projection $\pi\colon \RR^n \to \T^n$ is a smooth submersion. The smooth manifold $\T^n$ is known as an $n$-dimensional torus. Furthermore, the projection $\pi$
is a local diffeomorphism and $\RR^n$ is a universal covering space of $\T^n.$ We consider a standard Euclidean metric $g$ on $\RR^n$ and since $\varphi$ acts by isometries, there is a canonical metric $\widetilde{g}$ on $\T^n$ defined by
\begin{equation}\label{inducedmetric}
	\widetilde{g}_p(u,v):=g_x((d\pi_x)^{-1}u,(d\pi_x)^{-1}v),
\end{equation}
 where $x\in \RR^n$ is any point with $\pi(x)=p\in \T^n$ and $u,v\in T_p\T^n.$

We now compute the holonomy group. Let $\widetilde{\gamma}$ be a loop based at $p\in \T^n.$ Covering space theory tells us that for any $x\in \pi^{-1}(p)$ there exists a unique curve $\gamma$ in $\RR^n$ with
$\gamma_0=x$, such that $\pi\circ\gamma=\widetilde{\gamma}.$ The endpoint of $\gamma$ is not necessarily equal to $x$, but it is always some element of the fibre $\pi^{-1}(p),$ so it can be written as $\varphi(x,z)$ for a unique
$z\in \Z^n.$ It is now clear that the parallel
transport $\widetilde{\tau}_{\widetilde{\gamma}}$ along $\widetilde{\gamma}$ of tangent vectors in $T_p\T^n$ can be expressed in term of parallel transport $\tau_\gamma$ along $\gamma$ in $\RR^n$ and it is given by
$$\widetilde{\tau}_{\widetilde{\gamma}}=d \pi_{\varphi(x,z)}\circ\tau_\gamma \circ(d\pi_x)^{-1}=d \pi_x\circ (d\varphi(\,\cdot\,,z)_x)^{-1}\circ\tau_\gamma \circ(d\pi_x)^{-1}=\mathrm{id}_{T_p\T^n}.$$ 
In the last equality we used that the parallel transport of a flat Euclidean metric, when we identify $T\RR^n$ by $\RR^n\times \RR^n,$ is given exactly by the translation $\varphi(\,\cdot\,,z).$ Hence, the holonomy
group is trivial and the holonomy bundle is diffeomorphic to $\T^n$. It also follows that each horizontal
vector field $H_i$ on the holonomy bundle is $\pi$-related (see~\eqref{eq:relatedVF} below for the definition) to the standard vector field $\partial_i$ on
$\RR^d$ for $i=1,\ldots,d.$ This implies the geodesic completeness of the torus and shows that every \levy{} process on $\T^n$ is given as $\pi(X)$, where $X$ is
an arbitrary \levy{} process on $\RR^n.$ On the other hand isotropic \levy{} processes on $\T^n$ are exactly those given as $\pi(X)$ for an isotropic
\levy{} process $X$ on $\RR^n$ and thus form a strictly smaller class. Actually, $\Z^n$ is a closed subgroup of $\RR^n$ and the action we considered is given via the group operation, so torus $\T^n$ can actually be given a structure of a Lie group
and we can alternatively proceed as in Section~\ref{levyonlie} below to recover the same results.
\subsubsection{Generalized Klein bottles} 
The second example generalizes the Klein bottle. Let $n\in\N$ and let $\RR^{n+1}=\RR^n\times \RR$. The group we consider is a semi-direct product $\Z^n\rtimes\Z$, so that the group multiplication is given by
$(a,b)\cdot(z,w):=((-1)^w a+z,b+w)$. Note that this is not a subgroup of $\RR^{n+1}$ with group operation of addition. The group action we consider is 
$$\varphi\colon (\RR^{n}\times \RR)\times (\Z^n\rtimes\Z) \to \RR^n\times \RR, \, \varphi((x,y),(z,w))=((-1)^w x+z,y+w).$$
Again, the action is free and proper so the quotient $\K^{n+1}:=(\RR^{n}\times \RR)/(\Z^n\rtimes\Z)$ has a unique
smooth structure making it into a smooth manifold of dimension $n$. We call the manifold $\K^{n+1}$ an $(n+1)$-dimensional Klein bottle. The projection $\pi\colon \RR^{n+1}\to \K^{n+1}$ is a local diffeomorphism and
$\RR^{n+1}$ is a universal covering space of $\K^{n+1}.$ As in~\eqref{inducedmetric} we can endow $\K^{n+1}$ with a flat Riemannian metric turning in into a geodesically complete manifold. 

To compute the holonomy group we proceed as above. Let $\widetilde{\gamma}$ be a loop based at $p\in \K^{n+1}$ and $\gamma$ a unique curve in $\RR^{n+1}$ starting at $(x,y)\in\pi^{-1}(p)$, such that $\pi\circ
\gamma=\widetilde{\gamma}.$ The endpoint of $\gamma$ can be written as $\varphi((x,y),(z,w))$ for a unique element $(z,w)\in \Z^n\rtimes\Z$. Again, we may compute the parallel transport as
$$\widetilde{\tau}_{\widetilde{\gamma}}=d \pi_{\varphi((x,y),(z,w))}\circ\tau_\gamma \circ(d\pi_{(x,y)})^{-1}=d \pi_{(x,y)}\circ (d\varphi(\,\cdot\,,(z,w))_{(x,y)})^{-1}\circ\tau_\gamma \circ(d\pi_{(x,y)})^{-1}.$$
We then easily see that the map $(d\varphi(\,\cdot\, ,(z,w))_{(x,y)})^{-1}\circ\tau_\gamma$ can be expressed in terms of standard coordinates on $\RR^{n+1}$ as 
$$\begin{bmatrix}
	(-1)^wI_n&0\\
	0^\top&1
\end{bmatrix},$$
where $I_n$ is an identity matrix of dimension $n$ and $0\in \RR^n$ is a zero column vector. Thus, the holonomy group is isomorphic to $\Z_2$ and all the \levy{} processes on $\K^{n+1}$ are given by $\pi(X)$, where $X$ is a \levy{}
process on $\RR^{n+1}$ with law invariant under the multiplication by the matrix
$$\begin{bmatrix}
-I_n&0\\
0^\top&1
\end{bmatrix}.$$ Again, this is less restrictive then only using isotropic \levy{} processes on $\RR^{n+1}$ which induce isotropic \levy{} processes on $\K^{n+1}$
 
\subsection{Lie groups} \label{levyonlie}

We will use Definition~\ref{def:levyonman} to construct \levy{} processes on a connected $d$-dimensional Lie group $G$ and compare them to a classical definition of (left) \levy{} processes on Lie groups defined via independent and stationary increments (see \cite{liao_levy} or \cite{huntgroups}).

 First, we need to select a suitable connection on $G$. Lie groups are parallelizable since they possess a global frame. To
 get a global frame we consider a basis $V_1,\ldots,V_d$ of a Lie algebra $\mathfrak{g}:=T_eG$ of $G$, where $e$ denotes the identity element. Given an element
 $V\in\mathfrak{g}$ we define a left-invariant vector field $V^L$ by $V^L|_g:=d(L_g)_e V$, where $d(L_g)_e$ is a differential of left multiplication $L_g\colon h\mapsto gh$. Then a global frame is given by $V^L_1,\ldots,V^L_d$. Analogously, we could define right-invariant vector fields $V_1^R,\ldots,V^R_d,$ which also form a global frame. Having a global frame
 allows us to define a connection (covariant derivative) on elements of the global frame and then extending it to all vector fields by $C^\infty(G)$-linearity in the first argument and by the product rule in the second argument. 
 
 We want to select an appropriate connection so that the parallel transport induced by any curve $\gamma$ is given
 exactly by the differential of the left multiplication $d(L_{\gamma_1\gamma_0^{-1}})_{\gamma_0}$ where the left multiplication takes the initial point $\gamma_0$
 of the the curve to its end point $\gamma_1$. Since we want that this equality holds for any curve $\gamma$, we quickly see that the only possible such connection is given via 
 $$\nabla_{V^L_i}V^L_j=0 \quad \text{for all} \quad i,j=1,\ldots,d$$
  and we call this connection a left-invariant connection. It is easily seen that this connection turns $G$ into a geodesically complete manifold. If we wanted that the parallel transport of a connection is given by the differential of right translations, we could
 use the same prescription, but the global frame would have to consist of right-invariant vector fields $V^R_1,\ldots,V^R_d$.
 
 It is obvious that the holonomy group associated with the left-invariant connection is trivial, so for any frame $u\in F(G)$ the restricted projection $\pi\colon P(u)\to G$ is actually a diffeomorphism between a holonomy bundle $P(u)$ and a Lie
 group $G$.
 For definiteness, we fix $u$ to be the frame  with $\pi(u)=e$ for which $u(e_i)=V_i$ holds for every $i=1,\ldots,d$.
 
 In order to get
 a \levy{} process $X$ on the Lie group $G$ we first construct a horizontal \levy{} process $U$ on $P(u)$ as a solution of a Marcus SDE~\eqref{eq:horlevy}, where $H_i$ are fundamental horizontal vector fields of the left-invariant connection and then
 $X=\pi(U).$ Since $\pi\colon P(u)\to G$ is a diffeomorphism, Lemma~\ref{lemma:transformMarcusSDE} shows that the process $X$ is actually a solution of a Marcus SDE 
 $$ X_t=\pi_*H_i(X_{t-})\diamond \d Y^i_t,$$
  and due to
 our particular choice of frame $u$ and the associated holonomy bundle $P(u)$, the push-forward vector fields $\pi_*H_i$ are equal to $V^L_i$ for each $i=1,\ldots,d$, so $X$ is a solution of a Marcus SDE
\begin{equation}\label{sde:levyX}
\d X_t=V^L_i(X_{t-})\diamond \d Y^i_t.
\end{equation}

Since $V^L_i$ are left-invariant vector fields and since $Y$ is a \levy{} process it seems obvious the process $X$ has independent and stationary increments, making it a (left) \levy{}
process on a Lie group $G$ as per classical definition. However, we would also like to relate some characteristics of the \levy{}
process $Y$ to those of the process $X$. In particular, we would like to relate their \levy{} measures, which govern their jumps and are therefore a measures on $\RR^d\backslash\{0\}$ and $G\backslash \{e\}$, respectively, which satisfy certain integrability conditions.

Process $X$ is a solution of~\eqref{sde:levyX} and by the definition of the Marcus SDE when $Y$ jumps at time $t$, then
$X$ also jumps at the same time and $X_t=X_{t-}\exp(V_i\Delta Y^i_t),$ where $\exp\colon \mathfrak{g}\to G$ is an exponential map of a Lie group.
Lie algebra $\mathfrak{g}$ is isomorphic to $\RR^d$ so given a \levy{} measure $\nu$ of
$Y$ we can construct a push-forward measure $\widetilde{\mu}:=\widetilde{\exp}_*\nu$ on $G$ where $\widetilde{\exp}\colon \RR^d \ni x=x^ie_i \mapsto \exp(x^iV_i)\in G$ and then the restricted measure
$\mu:=\widetilde{\mu}|_{G\backslash\{e\}}$ should be a \levy{} measure of the process $X$. We summarize these facts in the next proposition and provide the proof in Section~\ref{proofs:levyonLie}.
\begin{prop} \label{prop:sol_is_levy}
Let $X$ be a solution of a Marcus SDE~\eqref{sde:levyX}, where $Y$ is an $\RR^d$-valued \levy{} process with a \levy{} measure $\nu.$ Then $X$ is a (left) \levy{} process on $G$ with a \levy{} measure $\mu:=\widetilde{\exp}_*\nu|_{G\backslash\{e\}}$.
\end{prop}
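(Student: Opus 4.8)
The plan is to verify separately that $X$ has the (left) \levy{} property on $G$ and that its \levy{} measure is $\widetilde{\exp}_*\nu$ restricted away from the identity. The one computation I would do at the outset is the flow of a left-invariant vector field: for $a=a^ie_i\in\RR^d$ the curve $t\mapsto g\,\widetilde{\exp}(ta)$ is the integral curve of $a^iV^L_i$ through $g$, so the flow exponential satisfies $\exp(V^L_ia^i)(g)=g\,\widetilde{\exp}(a)$. Feeding this into the defining equation~\eqref{marcusDefProp} of the Marcus SDE~\eqref{sde:levyX} gives that $X$ jumps precisely when $Y$ does, with $X_{t-}^{-1}X_t=\widetilde{\exp}(\Delta Y_t)$ at each jump time; in particular $X$ has no fixed jump times and is stochastically continuous, and since $G$ is boundaryless and the jumps act by right translation no explosion occurs, so $\tau\equiv\infty$ a.s.\ and $X$ is an honest $G$-valued \cadlag{} process.

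For the \levy{} property I would argue through the transition semigroup. By Proposition~\ref{prop:wellDefLevyonman}, $X$ is a time-homogeneous Markov process with some semigroup $(P_t)$. Since $(L_g)_*V^L_i=V^L_i$, Lemma~\ref{lemma:transformMarcusSDE} shows that $L_g(X)$ solves the same Marcus SDE~\eqref{sde:levyX} driven by the same $Y$ but started from $gX_0$; uniqueness of solutions (Proposition~\ref{prop:MarcusWellDef}) then forces $L_g(X^x)=X^{gx}$, whence $P_t(f\circ L_g)=(P_tf)\circ L_g$ for all $g\in G$ and $t\ge0$. This is equivalent to $p_t(x,\cdot)=\delta_x*\rho_t$ with $\rho_t$ the law of $X^e_t$, so $(\rho_t)_{t\ge0}$ is a weakly continuous convolution semigroup of probability measures on $G$ — i.e.\ $X$ is a (left) \levy{} process on $G$ (see~\cite{huntgroups,liao_levy}).

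To identify the \levy{} measure I would specialise Proposition~\ref{prop:genX}: the holonomy group of the left-invariant connection is trivial, so every \levy{} $Y$ is $\Hol(u)$-invariant and~\eqref{eq:generatorX} applies. With the fixed frame $u$ satisfying $u(e_i)=V_i$, the (unique) frame $v$ above $g$ in $P(u)$ has $v(x)=x^iV^L_i|_g$, hence $H_i(f\circ\pi)(v)=V^L_if(g)$ and $\Exp_g(vx)=g\,\widetilde{\exp}(x)$ (the geodesic of the left-invariant connection from $g$ with velocity $x^iV^L_i|_g$ is $t\mapsto g\,\widetilde{\exp}(tx)$). Evaluating~\eqref{eq:generatorX} at $p=e$ on a compactly supported $f\in C^\infty(G)$ with $e\notin\supp f$, the drift and Gaussian terms as well as $f(e)$ and $V^L_if(e)$ all vanish, leaving
\begin{equation*}
\mathscr{L}_Xf(e)=\int_{\RR^d\backslash\{0\}}f\bigl(\widetilde{\exp}(x)\bigr)\,\nu(\d x)=\int_{G\backslash\{e\}}f\,\d\bigl(\widetilde{\exp}_*\nu|_{G\backslash\{e\}}\bigr),
\end{equation*}
the integral being finite since $\widetilde{\exp}^{-1}(\supp f)$ is bounded away from $0$ by continuity of $\widetilde{\exp}$ at $0$. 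Comparing with Hunt's formula (\cite{huntgroups,liao_levy}), the generator of a left \levy{} process acts on such $f$ by integration against its \levy{} measure; since these $f$ separate Radon measures on $G\backslash\{e\}$, we conclude that the \levy{} measure of $X$ equals $\mu=\widetilde{\exp}_*\nu|_{G\backslash\{e\}}$.

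This proposition is a synthesis of Propositions~\ref{prop:MarcusWellDef},~\ref{prop:wellDefLevyonman} and~\ref{prop:genX}, so I do not expect a serious obstacle. The two points needing care are the left-equivariance argument that makes $(\rho_t)$ a genuine convolution semigroup, and the fact that $\widetilde{\exp}$ need not be injective — so $\widetilde{\exp}_*\nu$ may charge $e$, corresponding to jumps $\Delta Y_t\in\widetilde{\exp}^{-1}(e)$ of $Y$ that leave $X$ fixed, and the restriction to $G\backslash\{e\}$ discards exactly this phantom mass (a set of possibly positive $\nu$-measure, which also shifts the drift of $X$ relative to $b$ but not the \levy{} measure).
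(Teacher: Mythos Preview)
Your proof is correct but takes a genuinely different route from the paper's. The paper proceeds by direct computation: it substitutes the \levy{}--It\^o decomposition of $Y$ into the defining property~\eqref{marcusDefProp} of the Marcus SDE, introduces smooth coordinate functions $\xi^i$ near $e$ to manage the small-jump compensation, and manipulates the resulting stochastic integral equation until it matches the Lie-group \levy{}--It\^o decomposition of~\cite[Thm.~1.2]{liao_levy}; along the way it verifies explicitly that $\mu=\widetilde{\exp}_*\nu|_{G\backslash\{e\}}$ satisfies the \levy{}-measure integrability conditions on $G$. You instead leverage the machinery of Section~\ref{ConsAndResults}: left-invariance of the driving fields together with uniqueness (Proposition~\ref{prop:MarcusWellDef}, Lemma~\ref{lemma:transformMarcusSDE}) gives a left-translation-equivariant Markov semigroup, hence a convolution semigroup and thus a left \levy{} process; then the generator~\eqref{eq:generatorX} evaluated at $e$ on test functions supported away from $e$ pins down $\mu$ by comparison with Hunt's formula. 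Your argument is more conceptual and exhibits the proposition as a direct consequence of the paper's general framework; the paper's computation is self-contained and yields as a by-product the full \levy{}--It\^o decomposition of $X$ on $G$, not merely its \levy{} measure. Two places in your write-up are a bit brisk: the non-explosion claim (standard for left-invariant SDEs on Lie groups, but worth a line or a citation), and the verification that $\widetilde{\exp}_*\nu|_{G\backslash\{e\}}$ is itself a \levy{} measure on $G$ --- though the latter becomes automatic once you have established that $X$ is a \levy{} process and that $\mu$ coincides with its \levy{} measure.
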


Proposition~\ref{prop:sol_is_levy} shows that every process $X$ constructed via Definition~\ref{def:levyonman} is a classical (left) \levy{} process in $G$, but the converse need not be true. Being able to represent a \levy{} process $X$ in $G$ as a solution of a Marcus
SDE~\eqref{sde:levyX} is equivalent to an existence of a stochastic horizontal lift $U$ of $X$, such that its anti-development $Y$ is a Euclidean \levy{} process.
By Proposition~\ref{prop:sol_is_levy} we know that \levy{} measures of processes $X$
and $Y$ then need to be related by $\mu=\widetilde{\exp}_*\nu|_{G\backslash\{e\}}.$ But the exponential map need not be surjective, so there can
exist \levy{} measures on $G$, and hence (left) \levy{} processes on $G$, which cannot be represented by this construction.
Obviously, to even have a chance of finding an anti-development $Y$, we must be able to represent all the jumps of $X$ via the exponential map. 
Actually, as soon as this is the case, we are able to construct the process $Y$.

\begin{prop} \label{prop:reprLevyOnLieViaHor}
	Let $X$ be a left \levy{} process in $G$ with a \levy{} measure $\mu$. Then there exists an $\RR^d$-valued \levy{}
	process $Y$ such that $X$ satisfies Marcus SDE~\eqref{sde:levyX} if and only if $\mu(\mathrm{Im}(\exp)^c)=0.$
\end{prop}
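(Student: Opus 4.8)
My plan is to treat the two implications separately, the nontrivial one being the ``if'' part. The ``only if'' direction follows at once from Proposition~\ref{prop:sol_is_levy}: if $X$ solves Marcus SDE~\eqref{sde:levyX} driven by an $\RR^d$-valued \levy{} process $Y$ with \levy{} measure $\nu$, then the \levy{} measure of $X$ is $\mu=\widetilde{\exp}_*\nu|_{G\backslash\{e\}}$, which is concentrated on $\widetilde{\exp}(\RR^d)=\mathrm{Im}(\exp)$, so that $\mu(\mathrm{Im}(\exp)^c)=0$. For the converse the overall strategy is: assuming $\mu(\mathrm{Im}(\exp)^c)=0$, I would exhibit a generating triplet $(a,\nu,b)$ on $\RR^d$ (associated with the cut-off $\chi(x)=\ind(\abs{x}<1)$) for which formula~\eqref{eq:generatorX} (with $M=G$ and the left-invariant connection) reproduces the generator of $X$; Theorem~\ref{thm:charLevyManifolds} then supplies an $\RR^d$-valued \levy{} process $Y$ on the same probability space with $X=\pi(U)$, where $U$ solves~\eqref{eq:horlevy}, and since $\pi\colon P(u)\to G$ is a diffeomorphism for the left-invariant connection, pushing~\eqref{eq:horlevy} forward through $\pi$ via Lemma~\ref{lemma:transformMarcusSDE} identifies $X$ as a solution of~\eqref{sde:levyX} (exactly as in the derivation of~\eqref{sde:levyX} in Section~\ref{levyonlie}). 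The input on the $G$ side is Hunt's formula (see~\cite{liao_levy}): on $f\in C^\infty_c(G)$ the generator of $X$ acts by $c^iV^L_if+\tfrac12 a'^{ij}V^L_iV^L_jf+\int_{G\backslash\{e\}}\big(f(gh)-f(g)-\phi^i(h)V^L_if(g)\big)\,\mu(\d h)$ for a symmetric nonnegative $a'$, a vector $c\in\RR^d$ and fixed coordinate functions $\phi^1,\dots,\phi^d\in C^\infty_c(G)$ at $e$ (so $\phi^i(e)=0$, $V^L_j\phi^i(e)=\delta^i_j$).

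The first genuine step is the construction of a \levy{} measure $\nu$ on $\RR^d\backslash\{0\}$ with $\widetilde{\exp}_*\nu|_{G\backslash\{e\}}=\mu$. Since $d\widetilde{\exp}_0$ is the identity, $\widetilde{\exp}$ restricts to a diffeomorphism of a small ball $B_r\subset\RR^d$ onto a relatively compact neighbourhood $V_e=\widetilde{\exp}(B_r)$ of $e$; for the ``small jumps'' I would set $\nu_0:=(\widetilde{\exp}|_{B_r})^{-1}_*(\mu|_{V_e\backslash\{e\}})$ and use that $(\widetilde{\exp}|_{B_r})^{-1}$ is a diffeomorphism, hence bi-Lipschitz, to transfer the integrability condition on $\mu$ near $e$ into $\int_{B_r}\abs{x}^2\,\nu_0(\d x)<\infty$. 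For the ``large jumps'', $\mathrm{Im}(\exp)=\bigcup_{R\in\N}\widetilde{\exp}(\overline{B_R})$ is a countable union of compacts, hence Borel, and a measurable-selection argument (e.g.\ Kuratowski--Ryll-Nardzewski applied to the compact-valued maps $h\mapsto\widetilde{\exp}^{-1}(h)\cap\overline{B_R}$ and patched over $R$) yields a Borel section $s\colon\mathrm{Im}(\exp)\to\RR^d$ of $\widetilde{\exp}$; I would then take $\nu_\infty:=s_*(\mu|_{G\backslash V_e})$, a finite measure (hence a \levy{} measure) that charges neither $\{0\}$ nor $B_r$, since $s(h)\in B_r$ would force $h=\widetilde{\exp}(s(h))\in V_e$. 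Using $\mu(\mathrm{Im}(\exp)^c)=0$ one checks $\widetilde{\exp}_*\nu_\infty=\mu|_{G\backslash V_e}$, so $\nu:=\nu_0+\nu_\infty$ is a \levy{} measure on $\RR^d\backslash\{0\}$ with $\widetilde{\exp}_*\nu|_{G\backslash\{e\}}=\mu$ and with no mass on $\widetilde{\exp}^{-1}(e)$.

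The second step is to match the generators. Evaluating~\eqref{eq:generatorX} at $p$ using the frame $v$ with $v(e_i)=V^L_i|_p$ — for which $H_i(f\circ\pi)(v)=V^L_if(p)$ and $\Exp_p(vx)=p\,\widetilde{\exp}(x)$, as recorded in Section~\ref{levyonlie} — turns it into $b^iV^L_if(p)+\tfrac12 a^{ij}V^L_iV^L_jf(p)+\int_{\RR^d\backslash\{0\}}\big(f(p\,\widetilde{\exp}(x))-f(p)-\ind(\abs{x}<1)x^iV^L_if(p)\big)\,\nu(\d x)$. I would rewrite the cut-off $\ind(\abs{x}<1)x^i$ inside the integral as $\phi^i(\widetilde{\exp}(x))$ minus a remainder; since $\phi^i(\widetilde{\exp}(x))=x^i+O(\abs{x}^2)$ near $0$, both the remainder integral $\beta^i:=\int_{\RR^d\backslash\{0\}}\big(\phi^i(\widetilde{\exp}(x))-\ind(\abs{x}<1)x^i\big)\,\nu(\d x)$ and the integrand $f(p\,\widetilde{\exp}(x))-f(p)-\phi^i(\widetilde{\exp}(x))V^L_if(p)$ (which is also $O(\abs{x}^2)$ near $0$) are $\nu$-integrable, so a change of variables through $\widetilde{\exp}_*\nu|_{G\backslash\{e\}}=\mu$ turns the jump term into $\int_{G\backslash\{e\}}\big(f(ph)-f(p)-\phi^i(h)V^L_if(p)\big)\,\mu(\d h)+\beta^iV^L_if(p)$. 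Taking $a:=a'$ and $b:=c-\beta$ then makes~\eqref{eq:generatorX} for the triplet $(a,\nu,b)$ agree with Hunt's formula for $\mathscr{L}_X$ on $C^\infty_c(G)$; since the holonomy group of the left-invariant connection is trivial, $(a,\nu,b)$ satisfies~\eqref{cond:invariance} vacuously, so $X$ is a Markov process with generator as in~\eqref{eq:generatorX}, and Theorem~\ref{thm:charLevyManifolds} concludes the proof as outlined above.

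The main obstacle is the construction of the measure $\nu$: because $\widetilde{\exp}$ is in general neither injective nor a homeomorphism onto its image away from $e$, one cannot simply pull $\mu$ back and must split into small and large jumps, invoking a measurable selection for the latter; the reconciliation of the cut-off in~\eqref{eq:generatorX} with the coordinate functions in Hunt's formula, encoded in the drift correction $\beta$, is a secondary and purely computational point.
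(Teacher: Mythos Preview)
Your proof is correct and follows the same overall architecture as the paper: the ``only if'' direction is dispatched via Proposition~\ref{prop:sol_is_levy}, and the ``if'' direction proceeds by (i) constructing a \levy{} measure $\nu$ on $\RR^d$ with $\widetilde{\exp}_*\nu|_{G\backslash\{e\}}=\mu$, (ii) matching the generator of $X$ with the form in~\eqref{eq:generatorX}, and (iii) invoking Theorem~\ref{thm:charLevyManifolds}. The paper's proof is terser on step~(ii), simply saying one may ``reverse the computations in the proof of Proposition~\ref{prop:sol_is_levy}''; your explicit drift correction $\beta$ is precisely what that reversal produces.

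The one genuine point of divergence is the tool used to lift the large-jump part of $\mu$ in step~(i). You construct a Borel section $s\colon\mathrm{Im}(\exp)\to\RR^d$ of $\widetilde{\exp}$ via measurable selection and then push $\mu|_{G\backslash V_e}$ forward through $s$. The paper instead appeals to an abstract result on the existence of pre-image measures under surjections (Doberkat, \cite[Prop.~1.101]{doberkat2007stochastic}): given the surjection $\widetilde{\exp}\colon F\to\mathrm{Im}(\exp)\cap U^c$ and the finite measure $\mu|_{\mathrm{Im}(\exp)\cap U^c}$, it produces a measure $\rho_F$ on $F$ with $\widetilde{\exp}_*\rho_F=\mu|_{\mathrm{Im}(\exp)\cap U^c}$ directly, without first building a section. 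Your route is slightly more concrete and makes the structure of $\nu_\infty$ transparent (it is supported on the graph of $s$), at the cost of a mild care point: Kuratowski--Ryll-Nardzewski requires the multifunction to be weakly measurable with nonempty closed values, so the patching over $R$ should be made precise (e.g.\ work on the Borel sets $\widetilde{\exp}(\overline{B_R})\setminus\widetilde{\exp}(\overline{B_{R-1}})$); alternatively one may simply cite that any continuous surjection between Polish spaces admits a Borel section. The paper's route is shorter but relies on a less commonly cited reference. Either device suffices.
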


The proof of the proposition is given in Section~\ref{proofs:levyonLie} and we can also deduce the following corollary.

\begin{cor} \label{cor:SurjectiveLIe}
	Let $G$ be a Lie group with a surjective exponential map. Then a process $X$ is a (left) \levy{} process in $G$ if and
	only if it is a solution of a Marcus SDE~\eqref{sde:levyX} for some $\RR^d$-valued \levy{} process $Y.$
\end{cor}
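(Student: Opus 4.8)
The plan is to obtain the corollary as an immediate consequence of the two preceding propositions, so the argument will be very short. First I would dispose of the ``if'' direction: it is exactly Proposition~\ref{prop:sol_is_levy}. Indeed, if $X$ is a solution of Marcus SDE~\eqref{sde:levyX} driven by some $\RR^d$-valued \levy{} process $Y$ with \levy{} measure $\nu$, then Proposition~\ref{prop:sol_is_levy} already says that $X$ is a (left) \levy{} process on $G$ (with \levy{} measure $\widetilde{\exp}_*\nu|_{G\backslash\{e\}}$). Note that surjectivity of the exponential map is not used for this implication.

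For the ``only if'' direction, I would start from an arbitrary (left) \levy{} process $X$ on $G$ with \levy{} measure $\mu$ and apply Proposition~\ref{prop:reprLevyOnLieViaHor}, which provides an $\RR^d$-valued \levy{} process $Y$ such that $X$ solves Marcus SDE~\eqref{sde:levyX} precisely when $\mu(\mathrm{Im}(\exp)^c)=0$. Under the standing hypothesis that $\exp\colon\mathfrak{g}\to G$ is surjective we have $\mathrm{Im}(\exp)=G$, hence $\mathrm{Im}(\exp)^c=\emptyset$ and the condition $\mu(\mathrm{Im}(\exp)^c)=0$ holds vacuously. This yields the required $Y$ and completes the proof.

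I do not anticipate any genuine obstacle: all the substance has been isolated in Propositions~\ref{prop:sol_is_levy} and~\ref{prop:reprLevyOnLieViaHor}, and surjectivity of $\exp$ enters only to verify their hypothesis trivially. The sole minor point worth recording is the bookkeeping around the identity element: $\mu$ is a measure on $G\backslash\{e\}$ while $\mathrm{Im}(\exp)$ contains $e=\exp(0)$, so reading the complement inside $G\backslash\{e\}$ it is still empty and no issue arises.
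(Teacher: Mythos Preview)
Your proposal is correct and matches the paper's intended derivation: the corollary is deduced directly from Propositions~\ref{prop:sol_is_levy} and~\ref{prop:reprLevyOnLieViaHor}, with surjectivity of $\exp$ making $\mathrm{Im}(\exp)^c=\emptyset$ so that the hypothesis $\mu(\mathrm{Im}(\exp)^c)=0$ holds trivially. The paper does not spell out a separate proof for the corollary, and the remark following it confirms that the \emph{if} direction comes from Proposition~\ref{prop:sol_is_levy} (without needing surjectivity), exactly as you note.
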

\begin{remark}\begin{enumerate}[(i)]
		\item A statement analogous to the \emph{if} part of Corollary~\ref{cor:SurjectiveLIe} is stated in the review article~\cite[p.~8]{ApplebaumLiaoMarkovJumps}. In fact, Proposition~\ref{prop:sol_is_levy} shows that the
		\emph{if} part holds without the assumption on the surjectivity of the exponential map.
		\item In the \emph{only if} part, for a process $X$ to satisfy Marcus
		SDE~\eqref{sde:levyX}, one needs to construct a \levy{} process $Y$ on the extended probability space such
		that~\eqref{sde:levyX} holds. It is not sufficient
		to construct a \levy{} process $Y$ on some other unrelated probability space, such that a
		different solution $\widetilde{X}$ of~\eqref{sde:levyX} driven by $Y$ has the same law as $X$.
		The construction of the
		\levy{} process $Y$ from $X$ is given in the proof of our main result, Theorem~\ref{thm:charLevyManifolds}.
	\end{enumerate}

\end{remark}

Conditions of the corollary often hold, for example if the group is connected and compact or if it is simply connected and
nilpotent. On the other hand, we trivially observe non-surjectivity of the exponential map when a Lie group is not connected,
since then the image of the (connected) Lie algebra under the (continuous) exponential map obviously cannot be the whole
(disconnected) group. A non-trivial example of a connected (and non-compact) Lie group
with a non-surjective exponential map is given by the group $\GL(2)_+$, i.e.\ the group of matrices in $\RR^2\otimes \RR^2$ with a strictly positive determinant.

\subsection{Further examples}
Definition~\ref{def:levyonman} allows us to define \levy{} processes on an arbitrary smooth manifold as soon as we specify a connection on it.
On a general smooth manifold there are (infinitely) many possible choices of a connection and in some cases, as is seen in previous sections, there is a canonical choice. 
The particular connection we use is intimately related to the Definition~\ref{def:levyonman} and leads to often non-trivial conditions~\eqref{cond:invariance}. We will now show how these conditions
restrict and may sometimes even severely limit the class of \levy{} processes on a manifold.

Recall that the holonomy group $\Hol(u)$ of the holonomy bundle $P(u)$ (associated with a frame $u\in F(M)$) is a Lie subgroup of the general linear group $\GL(d)$, where $d$ is the dimension of the
underlying manifold $M.$ The holonomy group is determined by the connection and it is natural to consider the question of whether any Lie subgroup of the general linear group $\GL(d)$ can be realized as a holonomy
group of some manifold with the connection. A partial answer has been known for over half a century and \cite{HanoOzekiLinearConnections} shows that for any connected Lie subgroup of $\GL(d)$ there exist
some manifold with a connection, such that the induced holonomy group is exactly the selected subgroup.

Hence it is possible to find a manifold with a connection, such that its holonomy group is equal to $\GL(d)_+,$ i.e.\ invertible matrices with a strictly positive determinant. In this case the \levy{} integrator 
in the definition of a horizontal \levy{} process has to satisfy conditions~\eqref{cond:invariance} and it is quickly seen that having such a large holonomy group forces the \levy{} integrator to be a
trivial process constantly equal to $0$. Hence all \levy{} processes on such a manifold have to be constant as well. This is quite an extreme example, but it shows that the choice of the connection can have a profound effect on the size of the class of \levy{} processes. 

For a less extreme example let us consider a group consisting of matrices of the form 
$$\begin{bmatrix}
	I_{d-1}&0\\
	0^\top&\beta
\end{bmatrix},$$
where $I_{d-1}$ is an identity matrix of dimension $d-1$, $0\in \RR^{d-1}$ is a zero column vector and $\beta>0.$ Such matrices for strictly positive values of parameter $\beta$ form a connected Lie
subgroup of $\GL(d)$, hence it may once again be realized as a holonomy group of some manifold with a connection. In this case conditions~\eqref{cond:invariance} force the \levy{} integrator to stay in the
$(d-1)$-dimensional hyperplane perpendicular to $e_d$. There are no further restrictions, so we may take any \levy{} integrator, which is given as an arbitrary
$(d-1)$-dimensional Euclidean \levy{} process in the first $d-1$ components and with vanishing last component. 
This essentially means that the \levy{} process on the manifold ``loses one direction'' in which it may move, but this is a geometric restriction required to keep our definition well-posed.

\subsubsection{Cylinder with a family of connections} \label{section:cylinder}
We now present an example of a 2-dimensional manifold with a 1-parameter family of connections on it. Depending on the value of the parameter, the holonomy group turns out to be a cyclic subgroup of
$\SO(2)$, which is either finite or an infinite dense subgroup isomorphic to $\Z.$ In particular, if the rank of the holonomy group is even, it contains the element $-I_2$, providing
a class of examples where we may simplify the jump part of the generator of \levy{} processes on the manifold as in Remark~\ref{remark:simplifyGenerator}(i).
Moreover, the connection introduced below is not torsion-free (for $\alpha \neq 0$) and thus cannot be the Levi-Civita connection of a Riemannian manifold.

The $2$-dimensional manifold $M=\Sp^1\times \RR$ is paralellizable with a global frame given by the vector fields $V_\theta,\, V_z$ defined as follows: under the natural inclusion
$\Sp^1\times \RR \to \RR^2\times \RR$ let $V_\theta=(-y\frac{\partial}{\partial x} + x\frac{\partial}{\partial y})\big|_M$ and $V_z =(\frac{\partial}{\partial z})\big|_M$.
We can now specify a connection (covariant derivative) via the global frame:
$$\nabla_{V_z}V_\theta = \nabla_{V_z}V_z = 0, \quad \nabla_{V_\theta}V_\theta = - \alpha V_z,\quad \nabla_{V_\theta}V_z=\alpha V_\theta,$$ where $\alpha\in\RR$ is a real parameter.
By using~\eqref{eq:geodesic} we can bound the derivatives of geodesics (in the ambient space) which shows that $M$ equipped with this connection is geodesically complete.
Furthermore, it is easy to describe the parallel transport of such a connection as we may use a global frame to identify different tangent spaces. If we are moving along $V_z$, the tangent
vectors remain unchanged, while moving along $V_\theta$ rotates the tangent vectors
by an angle expressed in radians as $\alpha$ times the directed length of the horizontal displacement.
Hence the holonomy group is generated by the rotation around the origin through angle $2\pi\alpha$. 
The holonomy group is thus isomorphic to $\Z$ if $\alpha$ is irrational and to $(\Z_q,+)$ (for some $q\in\N$) if $\alpha$ is a rational. In particular, by taking
$\alpha = \frac{1}{2},$ the holonomy group equals $\{I_2,-I_2\}.$

\section{Proofs} \label{proofsandTechnicalresults}

\subsection{Marcus SDEs on manifolds} \label{sec:MarcusSDEResults}
We start with the proof of our main existence and uniqueness result for Marcus SDEs.

\begin{proof}[Proof of Proposition~\ref{prop:MarcusWellDef}]
	In~\cite{genStratonovich} Marcus SDE was studied in Euclidean spac\-es. They use a component-wise definition of a solution; however, the chain rule~\cite[Prop.~4.2]{genStratonovich} shows that it is
	equivalent to Definition~\ref{def:MarcusSDE} when the smooth manifold $M$ is taken to be the Euclidean space and the vector fields are Lipschitz continuous, so that the solution is defined for all times in $\RR_+$. 
	To prove the proposition for a general smooth manifold $M$, we use Whitney's embedding theorem to embed $M$ as a closed submanifold of the Euclidean space $\RR^n$ for a sufficiently large\footnote{Twice the dimension of
		the manifold $M$ suffices.} $n$. In particular, this allows us to identify the point $\partial$ of compactification $\widehat{M}$ with the point $\infty$ of compactification of $\RR^d.$
	We can then smoothly extend  vector fields $V_i$ on $M$ to vector fields $V_i^*$ on the whole $\RR^n$.
	Being smooth, these vector fields are locally Lipschitz continuous. We consider closed balls $\B_k:=\set{x\in\RR^n}{\abs{x}\le k}$ for $k\in \N$ and for each $k\in\N$ we take smooth functions $\varphi_k$ such
	that $\varphi_k=1$ on $\B_k$ and $\supp\varphi_k\subseteq \B_{k+1}$. By denoting $V^{(k)}_i:=\varphi_kV^*_i$ for $i=1,\ldots,\ell$ we see that vector fields $V^{(k)}_i$ are compactly supported, hence they are globally Lipschitz continuous and any linear combination of them is a complete vector field. 
	
	We consider a Marcus SDE 
	\begin{equation} \label{eq:restrictedMarcusSDE}
		\d X^{(k)}_t= V^{(k)}_i(X^{(k)}_{t-})\diamond \d W^i_t
	\end{equation}
	on $\RR^n$ with the initial condition $X^{(k)}_0\in M$ and without loss of generality (by translation) we may assume that $X^{(k)}_0=0\in \B_k$. Then~\cite[Thm.~3.2]{genStratonovich} shows that there exists a unique
	solution $(X^{(k)}_t)_{t\in\RR_+}$ of~\eqref{eq:restrictedMarcusSDE} and since vector
	fields $V^{(k)}_i$ are tangent to $M$, \cite[Prop~4.2]{genStratonovich} shows that the solution $X^{(k)}$ stays on $M$. We define
	$$\tau_k:=\inf\set{s\in\RR_+}{\exists \theta \in [0,1] \text{ such that } \exp(\theta V_i\Delta W^i_s)(X^{(k)}_{s-})\notin \B_k},$$ i.e.\ $\tau_k$
	is the first time the process $X^{(k)}$ or the ``hidden'' trajectory of its jump exits $\B_k.$
	For $k\le \ell$ consider the processes $X^{(k)}$ and $X^{(\ell)}$. Restricted to the interval $[0,\tau_k),$ they depend on their corresponding vector fields $V_i^{(k)}$ and $V_i^{(\ell)}$
	restricted to $\B_n$, but the vector fields coincide there. 
	Therefore, uniqueness of solutions from~\cite[Thm.~3.2]{genStratonovich} means that the processes $X^{(k)}$ and $X^{(\ell)}$ coincide on $[0,\tau_k),$ so we may unambiguously define a process
	$(X_t)_{t\in[0,\tau)}$ where $\tau=\lim_{k\to\infty}\tau_k$ so that  $X_t=X^{(k)}_t$ on $[0,\tau_k).$ It easily follows that the process $X$ is exactly the unique solution of Marcus SDE~\eqref{def:solSDEgen} on
	$[0,\tau)$. Finally, we have to prove that $\tau$ is an explosion time and that $\set{\tau_n}{n\in \N}$ form its announcing sequence. This immediately follows once we show that $\tau_n < \tau$ a.s.\ for every $n\in \N.$
	
	In general, it is possible that $X^{(k)}$ exits $\B_k$ at a time $\tau_k$ by a jump -- such a jump being induced by the jump $\Delta W_{\tau_k}$ of the integrator. If the vector field $V_i\Delta W^i_{\tau_k}$ were not complete
	then $\exp(V_i\Delta W^i_{\tau_k})(X_{\tau_k-})$ might not be defined and in this case $\tau_\ell=\tau_k$ for all $\ell\ge k$, and hence
	$\tau_k=\tau.$ However, this cannot happen if as assumed all the vector fields $a^iV_i$ are complete, since then the flow exponential map is always defined, thus the norm of $\exp(V_i\Delta W^i_{\tau_k})(X_{\tau_k-})$ is
	smaller than $L$ for some $L\in\N$ larger than $k$ and we have $\tau_k<\tau_L\le\tau$ for any $k\in \N$ and this finishes the proof.
\end{proof}

We now prove a couple of technical lemmas which elucidate the transformation rules of Marcus SDEs. First, we may consider a Marcus SDE and linearly transform the vector fields appearing in the SDE.
Next lemma shows that this is the same as appropriately linearly transforming the integrator.

\begin{lemma}\label{lemma:linearchangeMarcusSDE}
	Let $\ell,m\in \N$ and let $V_1,\ldots,V_\ell\in \Gamma(TM)$ be smooth vector fields on $M$ and $W$ be an $\RR^m$-valued \cadlag{} semimartingale. Suppose that $\widetilde{V}_1,\ldots,\widetilde{V}_m \in
	\Gamma(TM)$ are smooth vector fields given as $\widetilde{V}_i:=A^j_i V_j$ for some matrix 
	$A\in \RR^{\ell}\otimes\RR^m.$ Then an $M$-valued semimartingale $X$ defined on $[0,\tau)$ is a solution of Marcus SDE
	\begin{equation*}
		\d X_t = \widetilde{V}_i(X_{t-})\diamond \d W^i_t,
	\end{equation*} on $[0,\tau)$ if and only if it is a solution of Marcus SDE 
	$$\d X_t = V_j(X_{t-})\diamond \d \widetilde{W}^j_t$$
	on $[0,\tau),$ where an $\RR^\ell$-valued semimartingale $\widetilde{W}$ is given by $\widetilde{W}:=AW.$ 
\end{lemma}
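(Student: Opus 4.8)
The plan is to show that the defining identity~\eqref{marcusDefProp} of Definition~\ref{def:MarcusSDE} is invariant under the substitution $(\widetilde V_i,W)\rightsquigarrow(V_j,\widetilde W)$ with $\widetilde W=AW$, by matching the Stratonovich part and the jump sum separately. Since the very notion of an $M$-valued semimartingale on $[0,\tau)$ (with $\tau$ predictable and a fixed announcing sequence $\set{\tau_n}{n\in\N}$) makes no reference to the driving vector fields, it is the same for both SDEs; so it suffices to fix $f\in C^\infty(M)$ and $n\in\N$ and to verify that the right-hand side of~\eqref{marcusDefProp}, written for $\d X_t=\widetilde V_i(X_{t-})\diamond\d W^i_t$ with $X$ replaced by $X^{\tau_n}$, agrees with the right-hand side written for $\d X_t=V_j(X_{t-})\diamond\d\widetilde W^j_t$. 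The equivalence of the two ``solution'' properties then follows immediately.

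First I would handle the jump sums. At any time $s$ we have $\Delta\widetilde W^j_s=A^j_i\,\Delta W^i_s$, because $A$ is a constant (deterministic) matrix, and hence the two vector fields
$\widetilde V_i\,\Delta W^i_s=A^j_i V_j\,\Delta W^i_s$ and $V_j\,\Delta\widetilde W^j_s$ are literally the \emph{same} element of $\Gamma(TM)$. Therefore $\exp\big(\widetilde V_i\Delta W^i_s\big)(X_{s-})=\exp\big(V_j\Delta\widetilde W^j_s\big)(X_{s-})$ for every jump time $s<\tau$ (both sides being defined, and equal to $X_s$, whenever $X$ solves either SDE), and, using that a vector field acts on $C^\infty(M)$ as an $\RR$-linear derivation, $\widetilde V_if(X_{s-})\Delta W^i_s=A^j_i(V_jf)(X_{s-})\Delta W^i_s=V_jf(X_{s-})\Delta\widetilde W^j_s$. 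So the jump sums in the two versions of~\eqref{marcusDefProp} coincide term by term (over the at most countably many jump times), not merely in law.

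Next I would handle the Stratonovich integrals. Again because $A$ is constant, $\widetilde W^j=A^j_i W^i$ has Stratonovich differential $\d\widetilde W^j_s=A^j_i\,\d W^i_s$, so linearity of the Stratonovich integral together with $\widetilde V_if=A^j_iV_jf$ gives
\[
\int_0^t V_jf(X_{s-})\circ\d\widetilde W^j_s=A^j_i\int_0^t V_jf(X_{s-})\circ\d W^i_s=\int_0^t \widetilde V_if(X_{s-})\circ\d W^i_s,
\]
all integrals being taken along $X^{\tau_n}$, and the identity persists on $[0,\tau)$ by letting $n\to\infty$ along the announcing sequence. Combining this with the previous paragraph shows that, for each $f$ and $n$, equation~\eqref{marcusDefProp} for the first SDE holds iff it holds for the second, which is exactly the asserted equivalence.

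I do not expect a real obstacle here: the argument is a termwise comparison of~\eqref{marcusDefProp}. The only point needing a little care is that the Stratonovich integral commutes with left multiplication by the constant matrix $A$ on the stochastic interval $[0,\tau)$, which is the standard linearity of the Stratonovich integral combined with localisation along $\set{\tau_n}{n\in\N}$; the jump contribution reduces to the pointwise algebraic identity $\widetilde V_i\,\Delta W^i_s=V_j\,\Delta\widetilde W^j_s$ at each jump time.
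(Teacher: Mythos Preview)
Your proposal is correct and follows exactly the paper's approach: write out the defining property~\eqref{marcusDefProp} for the first SDE with $\widetilde V_i=A^j_iV_j$, and observe that by linearity of the Stratonovich integral and the identity $\widetilde W^j=A^j_iW^i$ it coincides termwise with the defining property for the second SDE. The paper compresses all of this into the phrase ``simple algebra using $\widetilde W^j=A^j_iW^i$,'' whereas you spell out the jump sum and Stratonovich parts separately, but the argument is the same.
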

\begin{proof}
	The defining property~\eqref{marcusDefProp} for the first Marcus SDE is the equation
	\begin{equation*}
		f(X_t)-f(X_0)=\int_0^tA^j_iV_j f(X_{s-})\circ\d W^i_s + \sum_{0<s\le t} \left( f\big(\exp(A^j_iV_j\Delta W^i_s)(X_{s-})\big)-f(X_{s-})-A^j_iV_jf(X_{s-})\Delta W^i_s\right)
	\end{equation*}
	for any $f\in C^\infty(M),$ and simple algebra using $\widetilde{W}^j=A^j_iW^i$ shows that this is also exactly the defining property for the second Marcus SDE.
\end{proof}

Another important property of the Marcus SDE is that it behaves nicely under diffeomorphisms. We will first prove a slightly more general result. Let $\Phi\colon M \to N$ be a smooth map. We say that vector fields $V\in\Gamma(TM)$ and
$V'\in\Gamma(TN)$ are \emph{$\Phi$-related} if 
\begin{equation}\label{eq:relatedVF}
	d\Phi_pV_p=V'_{\Phi(p)}
\end{equation}
for every $p\in M,$ where $d\Phi_p$ represents the differential of smooth map $\Phi$ at a point $p.$ 

\begin{lemma} \label{lemma:MarcusSDEandRelatedVF}
	Let $M$ and $N$ be smooth manifolds and let  $\Phi\colon M \to N$ be a smooth map. Let $V_1,\ldots,V_\ell\in\Gamma(TM)$ be vector fields and let semimartingale $X$ defined on $[0,\tau)$ be an
	$M$-valued solution of a Marcus SDE 
	\begin{equation} \label{eq:pretransform}
		\d X_t= V_i(X_{t-})\diamond \d W^i_t
	\end{equation}
	on $[0,\tau)$.
	Suppose that $V'_1,\ldots,V'_\ell\in \Gamma(TN)$ are vector fields, such that $V_i$ and $V'_i$ are $\Phi$-related for every $i=1,\ldots,\ell.$ 
	Then the $N$-valued semimartingale $\widetilde{X}:=\Phi(X)$ is a solution of a
	Marcus SDE 
	\begin{equation} \label{eq:posttransform}
		\d \widetilde{X}_t= V'_i(\widetilde{X}_{t-})\diamond \d W^i_t
	\end{equation}
	on $[0,\tau)$.
\end{lemma}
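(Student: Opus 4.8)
The plan is to verify the defining property~\eqref{marcusDefProp} for $\widetilde X := \Phi(X)$ directly, by pulling test functions on $N$ back through $\Phi$ and invoking two elementary consequences of $\Phi$-relatedness.

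First I would reduce the statement to checking~\eqref{marcusDefProp}. Since $X$ is a semimartingale on $[0,\tau)$, there is an announcing sequence $\set{\tau_n}{n\in\N}$ of $\tau$ such that $h(X^{\tau_n})$ is a real semimartingale for every $h\in C^\infty(M)$ and $n\in\N$. For $g\in C^\infty(N)$ we have $g(\widetilde X) = (g\circ\Phi)(X)$ with $g\circ\Phi\in C^\infty(M)$, so $g(\widetilde X^{\tau_n})$ is a real semimartingale for each $n$; hence $\widetilde X$ is an $N$-valued semimartingale on $[0,\tau)$ with the same announcing sequence. It therefore suffices to fix $g\in C^\infty(N)$, set $f:=g\circ\Phi\in C^\infty(M)$, and deduce the Marcus identity for $(g,\widetilde X,V'_i)$ from the Marcus identity for $(f,X,V_i)$ (applied to the stopped processes $X^{\tau_n}$, for each $n$).

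Next I would record two consequences of the $\Phi$-relatedness~\eqref{eq:relatedVF}. \emph{(a) Chain rule for directional derivatives:} for $p\in M$,
$V_if(p)=df_p(V_i(p))=dg_{\Phi(p)}\big(d\Phi_p V_i(p)\big)=dg_{\Phi(p)}\big(V'_i(\Phi(p))\big)=V'_ig(\Phi(p))$,
so $V_if=(V'_ig)\circ\Phi$ on $M$; in particular $V_if(X_{s-})=V'_ig(\widetilde X_{s-})$ as processes, so the Stratonovich integrals in the two identities coincide term by term. \emph{(b) Intertwining of flows:} relatedness is linear, so for each $a=(a^1,\dots,a^\ell)\in\RR^\ell$ the vector fields $a^iV_i$ and $a^iV'_i$ are $\Phi$-related; hence $\Phi$ carries integral curves of $a^iV_i$ to integral curves of $a^iV'_i$, and consequently, whenever $\exp(a^iV_i)(p)$ is defined, $\exp(a^iV'_i)(\Phi(p))$ is also defined and $\Phi\big(\exp(a^iV_i)(p)\big)=\exp(a^iV'_i)(\Phi(p))$. (This is the standard fact that $\Phi$-related vector fields have $\Phi$-related flows; see e.g.~\cite{leeIntro}.) Since $X$ solves~\eqref{eq:pretransform} on $M$, $\exp(V_i\Delta W^i_s)(X_{s-})$ is defined for every jump time $s<\tau$, so (b) guarantees $\exp(V'_i\Delta W^i_s)(\widetilde X_{s-})$ is defined with $f\big(\exp(V_i\Delta W^i_s)(X_{s-})\big)=g\big(\exp(V'_i\Delta W^i_s)(\widetilde X_{s-})\big)$.

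Finally I would substitute. Writing~\eqref{marcusDefProp} for $f$ and $X$ and replacing, in every term, $f(X_{\bullet})$ by $g(\widetilde X_{\bullet})$ via $f=g\circ\Phi$, $V_if(X_{s-})$ by $V'_ig(\widetilde X_{s-})$ via (a), and $f\big(\exp(V_i\Delta W^i_s)(X_{s-})\big)$ by $g\big(\exp(V'_i\Delta W^i_s)(\widetilde X_{s-})\big)$ via (b), yields exactly~\eqref{marcusDefProp} for $g$ and $\widetilde X$ with vector fields $V'_i$; the jump series on the $N$-side converges because it is term-by-term equal to the (convergent) jump series on the $M$-side. Since $g\in C^\infty(N)$ was arbitrary and the argument applies to each $X^{\tau_n}$, $\widetilde X$ is a solution of~\eqref{eq:posttransform} on $[0,\tau)$. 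The only genuine point in the argument is (b), and in particular the possibility that the $V_i$ are not complete; this causes no difficulty because we only ever evaluate the flow exponential maps at arguments at which they are already known to be defined on $M$, so the intertwining transports this to $N$. Everything else is bookkeeping.
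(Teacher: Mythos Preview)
Your proof is correct and follows essentially the same approach as the paper: pull back a test function $g\in C^\infty(N)$ to $f=g\circ\Phi$, use the two standard consequences of $\Phi$-relatedness (the chain rule $V_if=(V'_ig)\circ\Phi$ and the flow intertwining $\Phi\circ\exp(a^iV_i)=\exp(a^iV'_i)\circ\Phi$), and substitute into~\eqref{marcusDefProp}. Your write-up is in fact slightly more careful than the paper's in that you explicitly verify that $\widetilde X$ is a semimartingale on $[0,\tau)$ and remark on the well-definedness of the flow exponentials when the $V_i$ need not be complete.
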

\begin{proof}
	We need to show that the defining property~\eqref{marcusDefProp} of Marcus SDE~\eqref{eq:posttransform} holds for $\widetilde{X}$ when we take an arbitrary smooth function $f\in C^\infty(N)$. We can take a function
	$f\circ\Phi\in C^\infty(M)$ and use the defining property~\eqref{marcusDefProp} of Marcus SDE~\eqref{eq:pretransform} for $X$ to get
	\begin{align} \label{eq:substituteAllInMarcus}
		f(\widetilde{X}_t)-f(\widetilde{X}_0)&=\int_0^tV_i(f\circ\Phi)(X_{s-})\circ\d W^i_s\\
		& + \sum_{0<s\le t} \left( f\circ \Phi\big(\exp(V_i\Delta W^i_s)(X_{s-})\big)-f(\widetilde{X}_{s-})-V_i(f\circ\Phi)(X_{s-})\Delta W^i_s\right) \nonumber.
	\end{align}
	
	Equation~\eqref{eq:relatedVF} implies that $V'_i f(\Phi(p))=V_i(f\circ\Phi)(p)$ 
	holds for any $f\in C^\infty(N)$ and $p\in M,$ so we have
	\begin{equation} \label{relatedVFtransformed}
		V'_i f(\widetilde{X}_{t-})=V_i(f\circ\Phi)(X_{t-})
	\end{equation} 
	for any $i=1,\ldots,\ell$.
	
	Using linearity in \eqref{eq:relatedVF} shows that vector fields $V_x:=x^iV_i$ and $V'_x:=x^iV'_i$ are $\Phi$-related for any $x=x^ie_i\in \RR^d.$ By \eqref{eq:relatedVF}, if a tangent vector of a curve $\gamma$
	is equal to $V_x|_p$ at $p$, then the tangent vector of a curve $\Phi\circ \gamma$ is equal to $V'_x|_{\Phi(p)}$ at $\Phi(p).$  In particular,
	$\Phi$ maps integral curves of $V_x$ to integral curves of $V'_x$, hence
	\begin{equation} \label{transformdiffeoexponential}
		\Phi\big(\exp(V_x)(p)\big)=\exp(V'_x)(\Phi(p))
	\end{equation} holds for all $p\in M.$ 
	
	Inserting~\eqref{relatedVFtransformed} and~\eqref{transformdiffeoexponential} into  equation~\eqref{eq:substituteAllInMarcus} yields exactly the defining property for Marcus SDE~\eqref{eq:posttransform} and $\widetilde{X}$ is its solution.
\end{proof} 

When a map $\Phi\colon M\to N$ is a diffeomorphism, we can define a \emph{push-forward $\Phi_*\colon \Gamma(TM)\to\Gamma(TN)$} so that for a vector field $V\in\Gamma(TM)$ the \emph{push-forward vector field $\Phi_*V\in \Gamma(TN)$} is defined by the prescription
\begin{equation} \label{def:pushforwardVF}
	(\Phi_*V)_q:=d\Phi_{\Phi^{-1}(q)}V_{\Phi^{-1}(q)}
\end{equation}
for any $q\in N$. It is clear from this definition that the vector fields $V$ and $\Phi_*V$ are $\Phi$-related, so the following lemma is an immediate corollary of Lemma~\ref{lemma:MarcusSDEandRelatedVF}. 
\begin{lemma} \label{lemma:transformMarcusSDE}
	Let $\Phi\colon M \to N$ be a diffeomorphism and let semimartingale $X$ defined on $[0,\tau)$ be an $M$-valued solution of a Marcus SDE 
	\begin{equation*} 
		\d X_t= V_i(X_{t-})\diamond \d W^i_t
	\end{equation*}
	on $[0,\tau).$
	Then the $N$-valued semimartingale $\widetilde{X}:=\Phi(X)$ is a solution of a
	Marcus SDE 
	\begin{equation*} 
		\d \widetilde{X}_t= \Phi_*V_i(\widetilde{X}_{t-})\diamond \d W^i_t
	\end{equation*}
	on $[0,\tau).$
\end{lemma}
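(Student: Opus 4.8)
The plan is to obtain this statement immediately from Lemma~\ref{lemma:MarcusSDEandRelatedVF}, so the "proof" is really just the verification that the hypotheses of that lemma are met. First I would record that, since $\Phi\colon M\to N$ is a diffeomorphism, the push-forward $\Phi_*V_i\in\Gamma(TN)$ is well defined for each $i=1,\ldots,\ell$ via~\eqref{def:pushforwardVF}, and that this defining formula, evaluated at $q=\Phi(p)$, reads $d\Phi_p(V_i)_p=(\Phi_*V_i)_{\Phi(p)}$ for every $p\in M$. This is precisely the assertion that $V_i$ and $\Phi_*V_i$ are $\Phi$-related in the sense of~\eqref{eq:relatedVF}.

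Then I would apply Lemma~\ref{lemma:MarcusSDEandRelatedVF} with the given smooth manifolds $M$ and $N$, the map $\Phi$, the vector fields $V_1,\ldots,V_\ell$ on $M$, the $\RR^\ell$-valued semimartingale integrator $W$, and the choice $V'_i:=\Phi_*V_i$ of $\Phi$-related vector fields on $N$. Since $X$ is by assumption an $M$-valued solution of $\d X_t=V_i(X_{t-})\diamond\d W^i_t$ on $[0,\tau)$, the lemma delivers exactly that $\widetilde X:=\Phi(X)$ is an $N$-valued semimartingale on $[0,\tau)$ solving $\d\widetilde X_t=\Phi_*V_i(\widetilde X_{t-})\diamond\d W^i_t$ on $[0,\tau)$, which is the claim.

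I do not expect any genuine obstacle here: the only nontrivial input is Lemma~\ref{lemma:MarcusSDEandRelatedVF} itself (whose proof rephrases the defining property~\eqref{marcusDefProp} through $f\mapsto f\circ\Phi$, using that $\Phi$ carries integral curves of $V_x=x^iV_i$ to integral curves of $V'_x=x^iV'_i$ and hence intertwines the two flow exponential maps), and the present lemma is a one-line corollary of it. The point of isolating it separately is practical rather than logical — it is the diffeomorphism-invariance of Marcus SDEs in this form that will be invoked repeatedly later, for instance to transport the SDE~\eqref{eq:horlevy} under $R_g$ on the frame bundle and under the bundle projection $\pi$ restricted to a holonomy bundle.
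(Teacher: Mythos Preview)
Your proposal is correct and matches the paper's own argument exactly: the paper observes that $V_i$ and $\Phi_*V_i$ are $\Phi$-related by the defining formula~\eqref{def:pushforwardVF}, and then states the lemma as an immediate corollary of Lemma~\ref{lemma:MarcusSDEandRelatedVF}.
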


\subsection{Proofs of Proposition~\ref{prop:wellDefLevyonman}, Lemma~\ref{lemma:genhorlevy} and Proposition~\ref{prop:genX}} \label{easierproofs}

\begin{proof}[Proof of Proposition~\ref{prop:wellDefLevyonman}]
	In the discussion prior to the Definition~\ref{def:levyonman} it was already shown that the invariance of the integrator under the holonomy group implies Markovianity of the projected process.
	We now show that changing the frame and the associated holonomy bundle does not alter the class of \levy{} processes on a connected smooth manifold $M$. 
	
	Consider a different frame $v\in F(M).$ Connectedness of $M$ implies that $\pi\colon P(v)\to M$ is surjective and if $v'\in P(v)$ is another frame, then $P(v')=P(v)$ and $\Hol(v')=\Hol(v)$, hence we may
	assume that $v=ug$ for some $g\in\GL(d)$ so that $\Hol(v)=g^{-1}\Hol(u)g$ holds \cite[Prop.~II.4.1]{kobayashinomizu}. We take a \levy{} process $X=\pi(U),$ where $U$ is a
	horizontal \levy{} process and a solution of a Marcus SDE~\eqref{eq:horlevy} with a $\Hol(u)$-invariant integrator $Y$ and $U_0\in P(u).$ Hence, if we denote $\widetilde{U}:=Ug$ and $\widetilde{Y}:={g^{-1}Y}$,
	then Lemmas~\ref{lemma:transformMarcusSDE},~\ref{lemma:changeHorizontal} and~\ref{lemma:linearchangeMarcusSDE} imply that $\widetilde{U}$ is a
	$P(v)$-valued solution of the Marcus SDE
	$$\d \widetilde{U}_t=H_i(\widetilde{U}_{t-})\diamond \d \widetilde{Y}^i_t$$
	with an integrator $\widetilde{Y},$ which is $\Hol(v)$-invariant. Since $X=\pi(U)=\pi(\widetilde{U}),$ $X$
	is also a \levy{} process when defined through the holonomy bundle $P(v)$. This shows that the same processes are called \levy{} processes on $M$ no matter which holonomy bundle we use for their definition. 
\end{proof}

\begin{proof}[Proof of Lemma~\ref{lemma:genhorlevy}]
	It is a well-known fact that a Euclidean \levy{} process $Y$ with the generating triplet $(a,\nu,b)$  can be expressed via the \levy{}-It\^o decomposition \cite[Thm.~2.4.16]{AppLevyBook} 
	\begin{equation} \label{levyitodecomp}
	Y^i_t=b^i t + \sigma^i_j B^j_t + \int_0^t\int_{0<\abs{x}<1}x^i\widetilde{N}(\d s,\d x) +  \int_0^t\int_{\abs{x}\ge 1}x^i N(\d s,\d x), 
	\end{equation}
	where $\sum_{k=1}^d\sigma^i_k \sigma^j_k=a^{ij},$
	$B$ is a standard $d$-dimensional Brownian motion,  $N$ is an independent Poisson random measure on $\RR_+\times \RR^d$ with mean measure $\leb\otimes\nu$, i.e.\ $\E[N([0,t],A)]=t\nu(A)$,  and $\widetilde{N}$ is a compensated random measure defined by $\widetilde{N}([0,t],A)=N([0,t],A)-t\nu(A).$
	Since $U$ is a solution of Marcus SDE~\eqref{eq:horlevy}, we can take $f\in C^{\infty}(P(u))$ and substitute~\eqref{levyitodecomp} into the defining property of Marcus SDEs to get
	\begin{align*}
	f(U_t)&=f(U_0)+b^i\int_0^t H_i f(U_{s-})\d s +\sigma_j^i 
	\int_0^t H_i f(U_{s-})\circ \d B^j_s\\
	&+\int_0^t\int_{0<\abs{x}<1}x^iH_i f(U_{s-})\widetilde{N}(\d s,\d x) 
	+\int_0^t\int_{\abs{x}\ge 1}x^iH_i f(U_{s-})N(\d s,\d x)\\
	&+ \int_0^t\int_{\RR^d\backslash\{0\}}\left( f(\exp(H_x)(U_{s-}))-f(U_{s-})-x^iH_i f(U_{s-})\right)N(\d s,\d x).
	\end{align*}
	After recalling the definition of Stratonovich integral and some algebraic manipulation using $\widetilde{N}(\d s, \d x)=N(\d s, \d x) - \d s\, \nu(\d x)$ we can rewrite the equation as
	\begin{align}
	f(U_t)&=f(U_0) +\sigma_j^i\int_0^t H_i f(U_{s-})\d B^j_s +\int_0^t\int_{\RR^d\backslash\{0\}}\left( f(\exp(H_x)(U_{s-}))-f(U_{s-})\right)\widetilde{N}(\d s,\d x)  \label{eq:horsubstituted}\\
	&+\int_0^t\mathscr{L}_U(f)(U_{s})\d s, \nonumber
	\end{align}
	where $\mathscr{L}_U$ is given by \eqref{eq:generatorHorLevy}. Since the first two integrals in the right hand side of equation~\eqref{eq:horsubstituted} are (local) martingales, standard arguments show that
	$\mathscr{L}_U$ is the (extended) generator (see\cite{DynkinBook,CJPSMarkovSemi}) of the horizontal \levy{} process $U$, at least on smooth functions.
\end{proof}

\begin{proof}[Proof of Proposition~\ref{prop:genX}]
	Since $f(X)=f\circ \pi(U),$ the only possible prescription for the generator of $X$ is given by
	\begin{equation} \label{eq:generatorPush}
		\mathscr{L}_X(f)(p)= \mathscr{L}_U(f\circ \pi)(v),
	\end{equation}
	where $v$ is any frame in $P(u)$ with $\pi(v)=p$ and $\mathscr{L}_U$ is a generator of a horizontal \levy{} process obtained in Lemma~\ref{lemma:genhorlevy}.
	We use invariance conditions~\eqref{cond:invariance} to apply \cite[Thm.~10.13]{DynkinBook} from which it follows
	that right hand side of \eqref{eq:generatorPush} is indeed well-defined and equal to the generator of $X$. The theorem also identifies the domain of the generator.
	
	In order to finish the proof and get the explicit expression~\eqref{eq:generatorX} for the generator we
	simply use in the above expression the equality~\eqref{bothExponentials} relating flow and geodesic exponential maps.
\end{proof}

\subsection{Proof of Theorem~\ref{thm:uniqueLift}} \label{proofHorLift}

For the proof of the theorem we will need the following estimate for the flow exponential map of the fundamental horizontal vector fields in local coordinates on the frame bundle.

\begin{lemma} \label{TaylorEstimateLemma}
	Consider local coordinates on $O\subseteq M$ and the associated local coordinates on $\pi^{-1}(O)\subseteq F(M).$ Let $u=(x^i,r^k_m)=(x,r)\in\pi^{-1}(O),\,c=c^ne_n\in \RR^d$ and consider
	$\exp(H_c)(u)=\psi(1),$ where $\psi(t)=\psi(t,u,c)$ is the integral curve of $H_c$ starting at $\psi(0)=u$, which we assume stays in $\pi^{-1}(O)$ for all $t\in[0,1].$ Then we have 
	\begin{equation} \label{taylorEstimateFlow}
	\exp(H_c)(u)= (x^i,r^k_m) + \left(c^n r^i_n,-c^n r^j_n r^l_m\Gamma^k_{jl}(x)\right)+ c^n c^{n'}F_{nn'}(\psi(\theta))
	\end{equation} for some smooth functions $F_{nn'}$ with values in $\RR^{d+d^2}$ and some $\theta\in(0,1).$ 
\end{lemma}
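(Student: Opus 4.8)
The plan is to apply Taylor's theorem with Lagrange remainder to the integral curve $\psi(t) = \psi(t,u,c)$ of $H_c$, evaluated at $t=1$, and to identify the zeroth- and first-order coefficients by computing $\psi'(0)$ explicitly in the chosen local coordinates. First I would recall the ODE satisfied by $\psi$: by definition of an integral curve, $\psi'(t) = H_c(\psi(t))$. In the local coordinates $(x^i, r^k_m)$ on $\pi^{-1}(O)$, the fundamental horizontal vector field $H_c$ has a well-known coordinate expression (see e.g.\ \cite[Ch.~III]{kobayashinomizu} or Appendix~\ref{app:connections}): writing $\psi(t) = (x(t), r(t))$, one has
\begin{equation*}
	\dot x^i(t) = c^n r^i_n(t), \qquad \dot r^k_m(t) = -\,c^n\, r^j_n(t)\, r^l_m(t)\, \Gamma^k_{jl}(x(t)),
\end{equation*}
where $\Gamma^k_{jl}$ are the Christoffel symbols of the connection in the coordinates on $O$. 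Since $\psi$ stays in $\pi^{-1}(O)$ for $t\in[0,1]$, the right-hand side is a smooth (indeed polynomial in $r$, smooth in $x$) function of $\psi(t)$, so $\psi$ is $C^\infty$ on $[0,1]$ and we may Taylor-expand each component.

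Next I would evaluate the first derivative at $t=0$, where $\psi(0) = u = (x,r)$: this gives $\dot x^i(0) = c^n r^i_n$ and $\dot r^k_m(0) = -c^n r^j_n r^l_m \Gamma^k_{jl}(x)$, which are precisely the components of the first-order term in \eqref{taylorEstimateFlow}. Taylor's theorem with Lagrange remainder, applied to each of the $d + d^2$ component functions $t \mapsto \psi^\alpha(t)$ on $[0,1]$, yields
\begin{equation*}
	\psi^\alpha(1) = \psi^\alpha(0) + \dot\psi^\alpha(0) + \tfrac12 \ddot\psi^\alpha(\theta_\alpha)
\end{equation*}
for some $\theta_\alpha \in (0,1)$. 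The second derivative $\ddot\psi^\alpha(t)$ is obtained by differentiating the ODE once more; by the chain rule it is a smooth function of $\psi(t)$ alone, and crucially it is bilinear in $c$ — each component of $\dot\psi$ is linear in $c$, so differentiating in $t$ and substituting $\dot\psi$ again produces exactly one more factor of $c$. Hence $\ddot\psi^\alpha(t) = c^n c^{n'} G^\alpha_{nn'}(\psi(t))$ for smooth functions $G^\alpha_{nn'}$. To get a single evaluation point $\theta$ rather than one $\theta_\alpha$ per coordinate, I would instead apply Taylor's theorem to the scalar function $t \mapsto \langle \lambda, \psi(t)\rangle$ for an arbitrary fixed covector, or more simply absorb the discrepancy: since the remainder is already of the stated form $c^n c^{n'} F_{nn'}(\psi(\theta))$ with $F_{nn'}$ allowed to be an arbitrary $\RR^{d+d^2}$-valued smooth function, one can just set $F_{nn'} := \tfrac12 G_{nn'}$ and note that the mean-value form of the integral remainder $\int_0^1 (1-t)\ddot\psi^\alpha(t)\,\d t$ equals $\ddot\psi^\alpha(\theta_\alpha)/2$; choosing $F_{nn'}$ to be the full vector of integral remainders $\int_0^1(1-t) G^\alpha_{nn'}(\psi(t))\,\d t$ avoids the issue of multiple points entirely, but if a single $\theta$ is genuinely wanted one invokes the intermediate value theorem on the continuous path $\theta \mapsto \ddot\psi(\theta)$ componentwise after reducing to one scalar remainder. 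Either route closes the argument.

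The only mildly delicate point is bookkeeping: confirming that the coordinate form of $H_c$ is exactly as written (this is standard and is recorded in the appendix), and confirming that "one more $t$-derivative = one more factor of $c$", which follows because the vector field $H_c$ depends linearly on $c$ and its components are polynomial in the fibre coordinates $r$ with smooth-in-$x$ coefficients, so all higher derivatives of $\psi$ along its own flow remain smooth on the compact interval $[0,1]$ and pick up powers of $c$ in the obvious way. There is no real analytic obstacle here; the lemma is essentially a second-order Taylor expansion of a smooth flow, and the content is just identifying the linear term with the explicit geometric expression involving the frame components and Christoffel symbols.
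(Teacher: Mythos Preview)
Your approach is essentially the same as the paper's: write $\psi(1)=\psi(0)+\dot\psi(0)+\tfrac12\ddot\psi(\theta)$, read off $\dot\psi(0)$ from the local-coordinate expression of $H_c$ (Lemma~\ref{lemma:horInlocal}), and then differentiate the ODE once more to see that $\ddot\psi(t)=c^nc^{n'}F_{nn'}(\psi(t))$ for explicit smooth $F_{nn'}$. You are in fact more careful than the paper about the single-$\theta$ issue for vector-valued Taylor remainders; the paper simply writes a single $\theta$ without comment, which is harmless here since the lemma is only ever used to bound the remainder.
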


\begin{proof}
	Via local coordinates we may consider $\psi$ as a smooth curve in $\RR^{d+d^2}$ and we can use Taylor approximation up to order 2 to write:
	$$\exp(H_c)(u)=\psi(1)=\psi(0)+\dot{\psi}(0)+\frac{1}{2}\ddot{\psi}(\theta)$$ for some $\theta\in(0,1).$
	We obviously have $\psi(0)=u=(x^i,r^k_m).$ Since $\psi$ is an integral curve of a fundamental horizontal vector field, we use the representation in local coordinates from Lemma~\ref{lemma:horInlocal} to write
	\begin{equation} \label{firstderivativePsi}
	\dot{\psi}(t)=H_c(\psi(t))=\left(c^n\psi^i_n(t),-c^n\psi^j_n(t)\psi^l_m(t)\Gamma^k_{jl}\left(\pi \circ\psi(t)\right)\right),
	\end{equation}
	from which $\dot{\psi}(0)=\left(c^n r^i_n,-c^n r^j_n r^l_m\Gamma^k_{jl}(x)\right)$ easily follows. Hence, we have already obtained the first two terms in~\eqref{taylorEstimateFlow} and we only need
	to obtain the final one. We compute further time derivative of~\eqref{firstderivativePsi} in which we can use~\eqref{firstderivativePsi} once more to get
	\begin{align*}
	\ddot{\psi}^i(t)&=c^n\dot{\psi}^i_n(t)=-c^nc^{n'} \psi^j_{n'}(t)\psi^l_n(t)\Gamma^i_{jl}\left(\pi \circ\psi(t)\right)\\
	\ddot{\psi}^k_m(t)&=c^nc^{n'}\psi^\alpha_{n'}(t)\psi^\beta_n(t)\Gamma^j_{\alpha \beta}\left(\pi \circ\psi(t)\right)\psi^l_m(t)\Gamma^k_{jl}\left(\pi \circ\psi(t)\right)\\
	&+c^nc^{n'}\psi^j_n(t)\psi^\alpha_{n'}(t)\psi^{\beta}_m(t)\Gamma^l_{\alpha\beta}\left(\pi \circ\psi(t)\right)\Gamma^k_{jl}\left(\pi \circ\psi(t)\right)\\
	&-c^nc^{n'}\psi^j_n(t)\psi^l_m(t)\partial_\alpha\Gamma^k_{jl}\left(\pi \circ\psi(t)\right)\psi^{\alpha}_{n'}(t),
	\end{align*} 
	so we can indeed write $\frac{1}{2}\ddot{\psi}(\theta)=c^nc^{n'}F_{nn'}(\psi(\theta))$ for some smooth functions $F_{nn'}$, and thus establish the results of the lemma. 
\end{proof}

\begin{proof}[Proof of Theorem~\ref{thm:uniqueLift}]	
	We will construct a horizontal lift and an anti-development using local charts and then patch the local solutions together.
	We consider local coordinates $\varphi^\alpha$ on a precompact set $O\subseteq M$ and the associated local coordinates $\phi^\alpha,\phi^k_m$ on $\pi^{-1}(O)\subseteq F(M)$. Inspired by
	the deterministic horizontal lift equation~\eqref{horliftlocal} and condition~\eqref{jumpsforlift}
	we claim that the horizontal lift $U=(\varphi^i(X),R^k_m)$ solves the following SDE written in local coordinates:
	\begin{align} \label{cons:horlift}
	{R}^k_m(t)&=R^k_m(0)-\int_0^tR^l_m(s-)\Gamma^k_{jl}(X_{s-}) \circ \d \varphi^j(X_s)\\
	&+\sum_{0<s\le t}\left(\exp\left(H_{U^{-1}_{s-}q(X_{s-})J_s}\right)(U_{s-})^k_m-R^k_m(s-)+R^l_m(s-)\Gamma^k_{jl}(X_{s-}) \Delta \varphi^j(X_s)\right),\nonumber
	\end{align} for every $t<\widetilde{\tau},$ where $\widetilde{\tau}=\inf\set{s\in \RR_+}{X_s\notin O}$.
	Its anti-development $W$ is then defined by 
	\begin{equation} \label{cons:antidevelopment}
	W^i_t=\int_0^t \overline{R}^i_j(s-)\circ\d \varphi^j(X_s) + \sum_{0<s\le t}\left((U^{-1}_{s-}q(X_{s-}) J_s)^i-\overline{R}^i_j(s-)\Delta \varphi^j(X_s) \right)
	\end{equation}
	for $t< \widetilde{\tau}$, where $\overline{R}$ is an inverse of $R$, i.e.\ $\overline{R}^i_jR^k_l=\delta^i_l\delta^k_j.$ 
	Note that such $U$ and $W$ will be adapted to $(\mathcal{G}_t)_{t\in \RR_+}.$

	We first analyse the equality~\eqref{cons:horlift}. For it to even make sense, we need to show that the sum on the right hand side is a.s.\ finite. We assume that we already have such a process $U$  which is \cadlag{},
	hence $U_-$ is locally bounded and we denote its trajectory up to time $t$ by $\kappa_t(\omega)$, which is a bounded set a.s. Hence there
	exists $\varepsilon(\omega)>0$ and an a.s.\ bounded set $D_t(\omega)\subseteq \pi^{-1}(O)$ such that every integral curve of a horizontal vector fields $H_c$ with initial point in $\kappa_t$ and $\abs{c}<\varepsilon$
	stays\footnote{This follows from results about systems of ODEs in Euclidean setting and uses only that the
	set of initial points is bounded and that the vector field is locally Lipschitz in an argument as well as in parameter.} in $D_t$ up to time $1$.
	We denote $c_s=U^{-1}_{s-}q(X_{s-})J_s,$ where $U^{-1}_{s-}q(X_{s-})$ is a locally bounded holonomy group valued process. This local boundedness together with condition~\eqref{jumpsforlift} implies that there are at most finitely many times $s\in[0,t]$
	with $\abs{c_s}\ge\varepsilon$, so the sum of absolute values of terms corresponding to such times is a finite random variable which we denote by $C_t$. Since $X_{s-}=\pi(U_{s-})$ and $X_s=\pi\left(\exp\left(H_{c_s}\right)(U_{s-})\right),$ we
	can use Lemma~\ref{TaylorEstimateLemma} and write each of the remaining terms corresponding to $\abs{c_s}<\varepsilon$ as
	\begin{align*}
	&R^k_m(s-)-c_s^n R^j_n(s-) R^l_m(s-)\Gamma^k_{jl}(X_{s-})+ c_s^nc_s^{n'}(F_{nn'})^k_m(\psi(\theta,U_{s-}))-R^k_m(s-)\\
	+&R^l_m(s-)\Gamma^k_{jl}(X_{s-}) \left(\varphi^j(X_{s-}) + c_s^n R^j_n(s-) + c_s^nc_s^{n'}(F_{nn'})^j(\psi(\theta,U_{s-}))-\varphi^j(X_{s-})\right)\\
	=&c_s^nc_s^{n'}\left(\widetilde{F}_{nn'}(\psi(\theta,U_{s-}))\right)^k_m,
	\end{align*} for some smooth functions $\widetilde{F}_{nn'}$ and some $\theta=\theta(s,\omega)\in (0,1).$ Triangle and Cauchy-Schwarz inequalities then imply that the absolute value of the whole sum is smaller or equal to
	\begin{align*}
	&C_t+\sum_{0<s\le t,\abs{c_s}<\varepsilon}  \abs{c_s^n c_s^{n'}\left(\widetilde{F}_{nn'}(\psi(\theta,U_{s-}))\right)^k_m}\\
	\le& C_t +d^{-1}\sup_{1\le n,n'\le d,\theta\in [0,1],s\in[0,t],\abs{c_s}<\varepsilon}\abs{\left(\widetilde{F}_{nn'}(\psi(\theta,U_{s-}))\right)^k_m}\sum_{0<s\le t,\abs{c_s}<\varepsilon} \abs{c_s}^2\\
	\le&C_t+ K_tL_t^2d^{-1}\sum_{s\le t}\abs{J_s}^2<\infty,
	\end{align*}
	where $L_t$ is the (local) bound for the holonomy group valued process $U^{-1}_{s-}q(X_{s-})$ and we have used that the final sum is finite by~\eqref{jumpsemiBound}.  We obtain the bound $K_t(\omega)$ for the
	supremum as a consequence of taking a supremum of finitely many continuous functions over a subset of a bounded set $D_t$.

	Next, we need to show that there exists a solution of SDE~\eqref{cons:horlift} which is a semimartingale.
	We wish to use the general theory of SDEs against semimartingales, so we need
	to rewrite the equation into a correct
	form. We start by considering SDE~\eqref{cons:horlift} in the matrix form so that the first integral can be written as 
	$$\int_0^t F(R(s-))\circ\d \varphi(X_s)=\int_0^t F_j(R(s-))\circ\d \varphi^j(X_s),$$
	where the random coefficients $F_j\colon (\RR^d\otimes\RR^d)\times \Omega\to \RR^d\otimes\RR^d$ are given by
	$$F_j(r,\omega)^k_{mj}:=r^l_m\Gamma^k_{jl}(X_{s-}(\omega)) $$
	for $j=1,\ldots,d$
	and $\varphi(X)$ is an $\RR^d$-valued semimartingale $\varphi(X):=(\varphi^1(X),\ldots,\varphi^d(X))^\top.$
	Note that as a function of $r,$ the coefficient $F$ is linear.
	Next, we need to appropriately transform the sum. 
	We view the sum as an $\RR^d\otimes\RR^d$-valued processes and we may use Lemma~\ref{lemma:changeexponential} to express the sum as a component-wise integral
	$$\int_0^tA(X_{s-},J_s)R(s-)\d Z_s,$$
	where $Z_t=\sum_{0<s\le t}\abs{J_s}^2$ is a (finite variation) real semimartingale. The $\RR^d\otimes\RR^d$-valued process $A(X_{t-},J_t)$ has components given by
	$$A(X_{t-},J_t)^k_m:=\left(\exp\left(H_{\widetilde{U}^{-1}_{t-}q(X_{t-})J_t}\right)(\widetilde{U}_{t-})^k_m-\delta^k_m+B(X_{t-},J_t)^k_m\right)/\abs{J_t}^2,$$
	where $\delta^k_m=\ind(k=m)$ is Dirac delta, the process $\widetilde{U}$ is given by $\widetilde{U}:=UR^{-1},$
	and the $\RR^d\otimes\RR^d$-valued process $B(X_{t-},J_t)$ has components given by
	$$B(X_{t-},J_t)^k_m:=\Gamma^k_{jm}(X_{t-}) \Delta \varphi^j(X_t)=\Gamma^k_{jm}(X_{t-}) \left( \varphi^j(\Exp_{X_{t-}}(q(X_{t-})J_t))-\varphi^j(X_{t-})\right).$$
	Notice that $\widetilde{U}$ has a simple representation in local coordinates as $\widetilde{U}_t=(X_t,I_d),$ so that $A(X_{t-},J_t)$ is indeed just a function of $X_{t-}$ and $J_t$ and there is no dependence on the process $R.$
	Therefore, we can rewrite SDE~\eqref{cons:horlift} in matrix form as
	\begin{equation} \label{eq:matrixhorlift}
		R(t)=R(0)+\int_0^t F(R(s-))\circ\d \varphi(X_s)+ \int_0^tA(X_{s-},J_s)R(s-)\d Z_s.
	\end{equation}
	Note that the integrands in~\eqref{eq:matrixhorlift} are linear in $R(s-)$ with random linear coefficients given by $\Gamma^k_{jl}(X_{s-})$ and $A(X_{s-},J_s)$. 
	Thus~\eqref{eq:matrixhorlift} is a standard form of an SDE against semimartingales with the integrands being linear, hence functional Lipschitz
	(in the terminology of~\cite[Ch.~V]{ProtterIntegration}). More precisely, 
	the integrands are obtained via an action of a functional Lipschitz operator on the process $R$. 
	Functional Lipschitzness is a consequence of linearity and a.s.\ boundedness of linear coefficients on finite intervals. 
	Indeed, for a.e.\ $\omega$ and every $t>0$ we have $\sup_{s\le t}\abs{\Gamma^k_{jl}(X_{s-}(\omega))}<\infty,$ which follows from the \cadlag{} property of $X$ and the smoothness of Christoffel symbols.
	Moreover, it also holds that $\sup_{s\le t}\abs{A(X_{s-}(\omega),J_s(\omega))}<\infty.$ Observe first that $A$ is non-zero only when $J$ is non-zero, which holds for at most countably many times, and further for every $\varepsilon>0$ we  have
	$\abs{J_s(\omega)}>\varepsilon$ for at most finitely times $s$. Thus, the finiteness of the supremum holds trivially at finitely many large-jump times. We may assume that all of the (possibly infinitely many)
	remaining small jumps are bounded by a constant $\varepsilon$. By the boundedness of
	$\widetilde{U}^{-1}_{s-}(\omega)q(X_{s-}(\omega))$ by some constant $K$ (see Remark~\ref{remark:defineJ}~(ii)) we can therefore consider $\varepsilon$ small enough, such that the transformed jumps
	$\widetilde{U}^{-1}_{s-}(\omega)q(X_{s-}(\omega))J_s(\omega)$, which are bounded by $\varepsilon K$, are small enough, so that we may apply Lemma~\ref{TaylorEstimateLemma} with
	$c=\widetilde{U}^{-1}_{s-}(\omega)q(X_{s-}(\omega))J_s(\omega)$ and $u=\widetilde{U}_{s-}(\omega)=(x^j,r^k_m)=(\varphi^j(X_{s-}(\omega)),\delta^k_m)$.
	Using~\eqref{taylorEstimateFlow} we compute
	\begin{align*}
		A(X_{s-},J_s)^k_m&=\left(\delta^k_m -c^j\Gamma^k_{jm}(X_{s-}(\omega))+c^nc^{n'}F_{nn'}(\psi(\theta))^k_m-\delta^k_m  \right)/\abs{J_s(\omega)}^2 \\
		&+\Gamma^k_{jm}(X_{s-}(\omega))\left( \varphi^j(X_{s-}(\omega))+ c^j+c^nc^{n'}F_{nn'}(\psi(\theta))^j- \varphi^j(X_{s-}(\omega))\right)/\abs{J_s(\omega)}^2\\
		&= c^nc^{n'} \left( F_{nn'}(\psi(\theta))^k_m + \Gamma^k_{jm}(X_{s-}(\omega))F_{nn'}(\psi(\theta))^j \right)/\abs{J_s(\omega)}^2\\
		&\le K^2 \sum_{n,n'=1}^d\left(F_{nn'}(\psi(\theta))^k_m + \Gamma^k_{jm}(X_{s-}(\omega))F_{nn'}(\psi(\theta))^j\right),
	\end{align*}
	from which boundedness of supremum of $A$ follows by using the \cadlag{} property of $X$, smoothness of maps
	$\Gamma^k_{jm}$ and $F_{nn'},$ and the fact that integral curves $\psi$ stay inside a compact set due to small sizes of jumps and the \cadlag{} property of $X$. 
	
	Thus, the coefficients of the SDE are indeed functional Lipschitz and \cite[Thm.~7 in~Ch.~V]{ProtterIntegration} implies that there exists a unique solution $R$ of the
	SDE~\eqref{eq:matrixhorlift} and the linearity of coefficients implies non-explosion of the solution,
	i.e.\ process $R$ is defined until the stopping time $\widetilde{\tau}$. 
	Actually, there is a potential issue as the coefficient $A(X_{s-},J_s)R(s-)$ is not predictable, but it is still optionally measurable w.r.t. $(\mathcal{G}_t)_{t\in \RR_+}$
	and the integral is against an increasing finite variation process so we may 
	still use a trivial extension of \cite[Thm.~7 in~Ch.~V]{ProtterIntegration} (see also remarks in the proof of \cite[Thm.~3.2]{genStratonovich}).
	
	We claim that the frame bundle valued process $U$ given in local coordinates as $U=(X,R)$ is exactly the horizontal lift of $X$.		
	It is clear that $U$ is a lift of $X$, i.e.\ it projects to $X$, but we still need to check that it is horizontal, more precisely, we need to identify its anti-development. The candidate for the
	anti-development is given by equation~\eqref{cons:antidevelopment}, initially only for $t<\widetilde{\tau}$, and similar calculation as above shows that the sum in~\eqref{cons:antidevelopment} is almost
	surely convergent so that $W$ is a semimartingale. We need to check that $W$ is an anti-development of $U$, i.e.\ we need to show that $U$ solves Marcus SDE~\eqref{def:solSDEgen} when the integrator $W$ is given
	by~\eqref{cons:antidevelopment}. It is enough to check the defining property of Marcus SDE~\eqref{def:solSDEgen} for functions which constitute a local chart on $\pi^{-1}(O)$.
	
	We first take $f=\phi^\alpha$, so that $\phi^\alpha(U_t)=\varphi^\alpha(X_t)$ and note that
	by the representation in local coordinates of fundamental horizontal vector fields from
	Lemma~\ref{lemma:horInlocal} we have $H_i\phi^\alpha(u)=r^\alpha_i$ for $u=(x^j,r^k_m)$. Therefore,
	\begin{align*}
	&\int_0^tH_i \phi^\alpha(U_{s-})\circ\d W^i_s + \sum_{0<s\le t} \left( \phi^\alpha(\exp(H_i\Delta W^i_s)(U_{s-}))-\phi^\alpha(U_{s-})-H_i \phi^\alpha(U_{s-})\Delta W^i_s\right)\\
	&=\int_0^tR^\alpha_i(s-)\overline{R}^i_j(s-)\circ \d \varphi^j(X_s)+ \sum_{0<s\le t} \left( R^\alpha_i(s-)(U^{-1}_{s-}q(X_{s-}) J_s)^i-R^\alpha_i(s-)\overline{R}^i_j(s-)\Delta \varphi^j(X_s)\right)\\
	&+\sum_{0<s\le t} \left( \varphi^\alpha(X_{s})-\varphi^\alpha(X_{s-})-R^\alpha_i(s-)\Delta W^i_s\right)\\
	&=\int_0^t1\circ \d \varphi^\alpha(X_s)+\sum_{0<s\le t} \left( R^\alpha_i(s-)\Delta W^i_s-\Delta \varphi^\alpha(X_s)+\Delta \varphi^\alpha(X_s)-R^\alpha_i(s-)\Delta W^i_s\right)\\
	&=\varphi^\alpha(X_s)-\varphi^\alpha(X_0)=\phi^\alpha(U_t)-\phi^\alpha(U_0)
	\end{align*}
	holds, where we used~\eqref{cons:antidevelopment} in the first equality and~\eqref{jumpsforanti}
	in the second inequality. This establishes the defining property of Marcus SDE~\eqref{def:solSDEgen} for $f=\phi^\alpha.$ 
	
	We now take $f=\phi^k_m$, so that $\phi^k_m(U_{s})=R^k_m(s)$ and note that Lemma~\ref{lemma:horInlocal} implies the equality $H_i\phi^k_m(u)=-r^\beta_i r^l_m\Gamma^k_{\beta l}(p)$ for $u=(p^i,r^k_m).$ Then	
	\begin{align*}
	&\int_0^tH_i \phi^k_m(U_{s-})\circ\d W^i_s + \sum_{0<s\le t} \left( \phi^k_m(\exp(H_i\Delta W^i_s)(U_{s-}))-\phi^k_m(U_{s-})-H_i \phi^k_m(U_{s-})\Delta W^i_s)\right)\\
	&=-\int_0^t R^\beta_i(s-) R^l_m(s-)\Gamma^k_{\beta l}(X_{s-})\overline{R}^i_j(s-)\circ \d \varphi^j(X_s)\\
	&+ \sum_{0<s\le t} \left(- R^\beta_i(s-) R^l_m(s-)\Gamma^k_{\beta l}(X_{s-})\Delta W_s^i+R^\beta_i(s-) R^l_m(s-)\Gamma^k_{\beta l}(X_{s-})\overline{R}^i_j(s-)\Delta \varphi^j(X_s)\right)\\
	&+\sum_{0<s\le t} \left( \phi^k_m(\exp(H_{U^{-1}_{s-}q(X_{s-})J_s})(U_{s-}))-\phi^k_m(U_{s-})+R^\beta_i(s-) R^l_m(s-)\Gamma^k_{\beta l}(X_{s-})\Delta W^i_s)\right)\\
	&=-\int_0^t R^l_m(s-)\Gamma^k_{j l}(X_{s-})\circ \d \varphi^j(X_s) \\
	&+ \sum_{0<s\le t}\left( \phi^k_m(\exp(H_{U^{-1}_{s-}q(X_{s-})J_s})(U_{s-}))-\phi^k_m(U_{s-})+R^l_m(s-)\Gamma^k_{j l}(X_{s-})\Delta \varphi^j(X_s)\right)\\
	&=R^k_m(t)-R^k_m(0)=\phi^k_m(U_t)-\phi^k_m(U_0)
	\end{align*}
	holds, where we used~\eqref{cons:antidevelopment} in the first equality and~\eqref{cons:horlift} in the penultimate equality and this establishes the defining property of Marcus SDE~\eqref{def:solSDEgen} for $f=\phi^k_m.$
	Thus, we have established that $W$ is an anti-development of $U$ (and $X$), hence $U$ is indeed a horizontal process and a horizontal lift of $X$. 
	
	We want to extend processes $U$ and $W$ to a larger time interval. 
	Note that we can find a (random) sequence of precompact charts $O_n$ such that $X_{\tau_{n-1}}\in O_n$ for $n\in \N$ where $\tau_0=0$ and $\tau_{n}=\inf\set{s>\tau_{n-1}}{X_{s}\notin O_n}$ for $n\in\N$ and such that $\tau_n\to \tau$ (the lifetime of $X$) as $n\to \infty.$
	Construction from above first defines horizontal lift and anti-development on $[0,\tau_1).$ At time $\tau_1$ we then define
	$U_{\tau_1}=\exp\left(H_{U^{-1}_{\tau_1-}q(X_{\tau_1-})J_{\tau_1}}\right)(U_{\tau_1-})$
	and  $W_{\tau_1}=W_{\tau_1-}+U^{-1}_{\tau_1-}q(X_{\tau_1-}) J_{\tau_1}$ and this makes $U$ horizontal lift and $W$ an anti-development of $X$ on $[0,\tau_1]$. Since $U_{\tau_1}\in\pi^{-1}(O_2)$ we can reuse the construction
	from above, where now the initial point is $U_{\tau_1}$ and this defines processes $U$ and $W$ also on $[\tau_1,\tau_2).$ Iteratively, this defines the horizontal lift and anti-development on
	$[0,\tau_n)$ so by sending $n\to \infty,$ the horizontal lift and anti-development are semimartingales defined until the lifetime of $X$.
	
	We still need to prove the uniqueness of horizontal lift and anti-development and hence
	independence of selection of local charts used in the construction. We will proceed similarly to \cite[Thm.~3.2]{horizontalLift}. Suppose that $U,W$
	are a horizontal lift and anti-development, respectively, induced by $X,J$ and suppose $V,\widetilde{W}$ are another such pair with $U_0=V_0.$ Since
	$\pi(U)=\pi(V)=X,$ there is a unique \cadlag{} process $g_s$ with values in $\GL(d)$ such that
	$V_s=U_sg_s=\rho(U_s,g_s),$ where $\rho\colon F(M)\times \GL(d)\to M$ is a right group action of $\GL(d)$ on the frame bundle. In particular, $g_0=e$ and the equality 
	$$\Delta \widetilde{W}_t=V^{-1}_{t-}q(X_{t-}) J_t=g_{t-}^{-1}U^{-1}_{t-}q(X_{t-}) J_t=g_{t-}^{-1}\Delta W_t$$ holds. Actually, it is not hard to see that
	\begin{equation} \label{relateAntiDevelopment}
	\widetilde{W}_t=\int_0^tg_s^{-1}\circ\d W_t
	\end{equation}
	holds. Since $W$ (resp.\ $\widetilde{W})$ is an anti-development of $U$ (resp.\ $V$), we can use the appropriate versions of \eqref{jumpsforlift} to compute
	$$U_tg_t=V_t=\exp(H_{\Delta \widetilde{W}_t})(V_{t-})= \exp(H_{g_{t-}^{-1}\Delta W_t})(U_{t-}g_{t-})=\exp(H_{\Delta W_t})(U_{t-})g_{t-}=U_tg_{t-},$$ 
	where we used Lemma~\ref{lemma:changeexponential} in the penultimate equality. It follows that $g_t=g_{t-}$ and hence the process $g_t$ is continuous.
	
	To prove uniqueness it is equivalent to show that the process $g_t$ from above is constantly equal to the identity. We consider local coordinates $u^\beta$ on $F(M)$ (these are all $\phi^\alpha$ and $\phi^k_m$ from
	above combined) and local coordinates $h^k$ on $\GL(d).$ For ease of notation we denote $V^\alpha_t=u^\alpha(V_t)$, $U^\beta_t=u^\beta(U_t)$ and $g^k_t=h^k(g_t)$. By the defining property of Marcus SDEs we have 
	\begin{align} \label{defForLocalOfLlift}
	V^\alpha_t&=V^\alpha_0+\int_0^t H_iu^\alpha(V_{s-})\circ\d \widetilde{W}^i_s+\sum_{0<s\le t} \left( u^\alpha\left(\exp(H_i\Delta \widetilde{W}^i_s)(V_{s-})\right)-V^\alpha_{s-}-H_i u^\alpha(U_{s-})\Delta \widetilde{W}^i_s\right)\\
	&=V^\alpha_0+\int_0^t H_i(u^\alpha\circ R_{g_s})(U_{s-})\circ \d W^i_s+\sum_{0<s\le t} \left(V^\alpha_{s}-V^\alpha_{s-}-H_i (u^\alpha\circ R_{g_s})(U_{s-})\Delta W^i_s\right), \nonumber
	\end{align}
	where we have used~\eqref{relateAntiDevelopment} and Lemma~\ref{lemma:changeiterated}. We also define $\rho^\alpha=u^\alpha\circ\rho$ so that we can alternatively write $V_t^\alpha=\rho^\alpha(U_t,g_t)$ and we use It\^o's formula to compute
	\begin{align*}
	V^\alpha_t&=V^\alpha_0+\int_0^t\frac{\partial\rho^\alpha}{\partial u^\beta}(U_{s-},g_s)\circ\d U^\beta_s+\int_0^t\frac{\partial \phi^\alpha}{\partial h^k}(U_{s-},g_s)\circ\d g^k_s\\
	&+\sum_{0<s\le t} \left( \rho^\alpha(U_s,g_s)-\rho^\alpha(U_{s-},g_s)-\frac{\partial\rho^\alpha}{\partial u^\beta}(U_{s-},g_s)\Delta U^\beta_s\right) \\
	&=V^\alpha_0+\int_0^t\frac{\partial\rho^\alpha}{\partial u^\beta}(U_{s-},g_s)H_iu^\beta(U_{s-})\circ W^i_s + \sum_{0<s\le t} \frac{\partial\rho^\alpha}{\partial u^\beta}(U_{s-},g_s) \left( U^\beta_s-U^\beta_{s-}-H_i u^\beta(U_{s-})\Delta W^i_s\right) \\
	&+ \int_0^t\frac{\partial \phi^\alpha}{\partial h^k}(U_{s-},g_s)\circ\d g^k_s +\sum_{0<s\le t} \left( V^\alpha_s-V^\alpha_{s-}-\frac{\partial\rho^\alpha}{\partial u^\beta}(U_{s-},g_s)\Delta U^\beta_s\right),
	\end{align*} where we used that $g_t$ is continuous and an analogous version of~\eqref{defForLocalOfLlift} for $U$. 
	Since $$\frac{\partial\rho^\alpha}{\partial u^\beta}(U_{s-},g_s)H_iu^\beta(U_{s-})=H_i(u^\alpha\circ R_{g_s})(U_{s-})$$
	holds by the chain rule, we see by comparing the above expressions for $V^\alpha_t$ that 
	$$\int_0^t\frac{\partial \phi^\alpha}{\partial h^k}(U_{s-},g_s)\circ\d g^k_s=0$$
	holds for all $\alpha.$ Since the matrix $[\frac{\partial \phi^\alpha}{\partial h^k}]^\alpha_k$ is of full rank (this is true since $g\mapsto \rho(u,g)$ is an embedding), we can conclude that 
	$$\int_0^t1\circ\d g^k_s=0$$ holds for all $k$. Therefore, the process $g_t$ is constant and hence constantly equal to identity. This proves that the horizontal lift and anti-development are unique and this completes the proof of the theorem.
\end{proof}

\subsection{Proof of Theorem~\ref{thm:charLevyManifolds}}
\label{proofofchar}

We already know from Theorem~\ref{thm:uniqueLift} that the anti-development (when it exists) is uniquely determined on $[0,\tau)$ by the pair of processes $X$ and $J$. Due to a particular jump structure of $X$ inferred from its generator, we will see that there will
always exist a process $J$ satisfying Definition~\ref{def:SemiJump}, but we need to find an appropriate one such that the anti-development will be a Euclidean \levy{} process. 

Hence, for the proof of Theorem~\ref{thm:charLevyManifolds} we need to construct a suitable jump process $J$ (on the extended probability space) and show that the associated anti-development is a \levy{} process with
given characteristics. Since the anti-development is a priori only defined on $[0,\tau)$, we also need to show that it can be extended past $\tau$.   

We will first focus on the construction of the appropriate jump process $J$, or more precisely, on the construction of the associated random measure on $\RR^d\times \RR_+$. This simple random measure, which we
will also denote by $J$ has (random) unit atoms at $(s,J_s)$ when $J_s\neq0.$ Before constructing it, we want to characterize it. The following lemma will be useful.
\begin{lemma} \label{lemma:transPoissMeasure}
	Let $G$ be a subgroup of $\GL(d)$ and let $N$ be a Poisson random measure on $\RR_+\times\RR^d$ with a mean measure $\leb \otimes\nu,$ where $\nu$ is a $G$-invariant \levy{} measure. Further, let $(g_t)_{t\in\RR_+}$
	be a predictable process with values in $G$. Denote by $\Phi$ a random transformation of $\RR_+\times\RR^d$ given by $$\Phi(\omega,t,z)=(t,g_t(\omega)z). $$
	Then the transformed (push-forward) measure $\Phi_*N:=N\Phi^{-1}$ is also a Poisson random measure on $\RR_+\times\RR^d$ with a mean measure $\leb \otimes\nu.$
\end{lemma}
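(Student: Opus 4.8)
The plan is to identify $\Phi_*N$ as an integer‑valued random measure, compute its predictable compensator, and then invoke a classical characterization of Poisson random measures. First I would observe that for each fixed $\omega$ the map $(t,z)\mapsto(t,g_t(\omega)z)$ is a Borel bijection of $\RR_+\times\RR^d$, with Borel inverse $(t,w)\mapsto(t,g_t(\omega)^{-1}w)$, since every $g_t(\omega)\in G\subseteq\GL(d)$ is invertible. Consequently $\mu:=\Phi_*N$ is again an integer‑valued random measure, whose atoms are the (distinct) images of the atoms of $N$; and since $g$ is predictable, $\mu([0,t]\times B)=\int_{[0,t]\times\RR^d}\ind_B(g_sz)\,N(\d s,\d z)$ is $\F_t$‑measurable, so $\mu$ is adapted to the underlying filtration (to which $N$ is a Poisson random measure and $g$ is predictable).

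The core computation is the compensator of $\mu$. For a nonnegative predictable function $H$ on $\Omega\times\RR_+\times\RR^d$, the change‑of‑variables formula for push‑forward measures gives $\int H\,\d\mu=\int (H\circ\Phi)\,\d N$, and $(\omega,s,z)\mapsto H(\omega,s,g_s(\omega)z)$ is again nonnegative and $\mathcal{P}\otimes\mathfrak{B}(\RR^d)$‑measurable, being the composition of the predictable map $(\omega,s)\mapsto g_s(\omega)$, the continuous action $(g,z)\mapsto gz$, and $H$. Applying the compensation formula for $N$ and then the $G$‑invariance of $\nu$ (i.e.\ $(g_s)_*\nu=\nu$, so that $\int_{\RR^d}H(s,g_sz)\,\nu(\d z)=\int_{\RR^d}H(s,z)\,\nu(\d z)$ for every fixed $\omega$ and $s$) yields
\[
\E\Big[\int H\,\d\mu\Big]=\E\Big[\int_{\RR_+}\!\!\int_{\RR^d}H(s,g_sz)\,\nu(\d z)\,\d s\Big]=\E\Big[\int H\,\d(\leb\otimes\nu)\Big].
\]
Since this holds for every nonnegative predictable $H$ and $\leb\otimes\nu$ is a deterministic (hence predictable) random measure, the predictable compensator of $\mu$ equals $\leb\otimes\nu$. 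Taking $H=\ind_{[0,t]\times\{|z|>\varepsilon\}}$ in the identity above and using $G$‑invariance once more gives $\E[\mu([0,t]\times\{|z|>\varepsilon\})]=t\,\nu(\{|z|>\varepsilon\})<\infty$, so $\mu$ does not explode.

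Finally I would invoke Watanabe's characterization of Poisson random measures \cite{JacodShiryaevLimit}: an adapted integer‑valued random measure on $\RR_+\times\RR^d$ whose compensator is a deterministic measure of the form $\leb\otimes\nu$ with $\nu$ $\sigma$‑finite is a Poisson random measure with mean measure $\leb\otimes\nu$ (with increments over disjoint time intervals independent). This gives the claim. The points to get right are the measurability of $H\circ\Phi$ and the non‑explosion of $\mu$, both routine and handled above, while the one substantive ingredient — the cancellation $\int H(s,g_sz)\,\nu(\d z)=\int H(s,z)\,\nu(\d z)$ — is immediate from $G$‑invariance, so I do not anticipate a serious obstacle. (Alternatively, feeding the same cancellation into the stochastic‑exponential formula for $N$ shows directly that the Laplace functional of $\mu$ is that of a Poisson random measure with intensity $\leb\otimes\nu$, bypassing Watanabe's theorem.)
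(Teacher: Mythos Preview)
Your proposal is correct and takes essentially the same approach as the paper: show that the compensator of the transformed measure is the deterministic measure $\leb\otimes\nu$ (using $G$-invariance of $\nu$) and then invoke a characterization theorem for Poisson random measures. The paper's proof is slightly terser---it directly verifies $\Phi_*(\leb\otimes\nu)=\leb\otimes\nu$ on product sets and cites \cite[Thm.~2.1]{KallenbergMarked1990} rather than Watanabe's theorem via \cite{JacodShiryaevLimit}---but the substance is the same.
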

\begin{proof}
	The compensator measure of $N$ is its mean measure $\mu:=\leb\otimes\nu$. It is enough to show that $\Phi_*\mu=\mu$, as we can then use \cite[Thm.~2.1]{KallenbergMarked1990} to deduce the claim.
	Let us take an arbitrary $I\in \mathfrak{B}(\RR_+), \, A\in \mathfrak{B}(\RR^d)$ and then
	\begin{align*}
	\Phi_*\mu(\omega)(I\times A)=\mu(\Phi^{-1}(\omega)(I	\times A))=\int_I\nu(g_t(\omega)A)\d t=\int_I\nu(A)\d t=\leb(I)\nu(A)
	\end{align*}
	holds for almost every $\omega$, where we used Fubini-Tonelli theorem in the second equality and a $G$-invariance of the measure $\nu$ in the third equality. Since sets $I\times A$ generate
	$\mathfrak{B}(\RR_+\times \RR^d)$, we have shown that $\Phi_*\mu$ is non-random and equal to $\mu$, so the claim of the lemma follows.
\end{proof}

Therefore, if $Y$ is a \levy{} process, with jumps given by a Poisson random measure $N$, which is an anti-development of $X$, then we know that $J$ needs to be given as a transformation of $N$ where each atom
at $(t,z)$ with $t<\tau$ is transformed to an atom at $(t,q^{-1}(X_{t-})U_{t-}z),$ where $U$ is solution of
Marcus SDE~\eqref{eq:horlevy} and $q$ is a section of the holonomy bundle as in Section~\ref{generalHorLift}. We do not transform the atoms at $(t,z)$ with $t\ge\tau$.  But
$q^{-1}(X_{-})U_{-}$ is a predictable $\Hol(u)$-valued process (we trivially extend it past $\tau$) and \levy{} measure $\nu$ of $Y$ is $\Hol(u)$-invariant, hence Lemma~\ref{lemma:transPoissMeasure} implies that $J$ is also a Poisson random
measure with mean measure $\leb\otimes\nu.$ Note that the condition equivalent to Definition~\ref{def:SemiJump} which relates the random measure $J$ and a process $X$ is 
\begin{equation} \label{XjumpsAndJ}
J(\{(s,z)\})=1 \implies X_s=\Exp_{X_{s-}}(q(X_{s-})z),\quad J\left(\{s\}\times \RR^d\right)=0\implies X_s=X_{s-} \quad \text{for } s<\tau.
\end{equation} 

Hence the goal is to find suitable $J$ which is a Poisson random measure and we have the following result.

\begin{prop} \label{prop:constructPoissonRandomMeasure}
	Let $X$ be a Markov process on $M$ adapted to $(\mathcal{H}_t)_{t\in \RR_+}$ with the generator $\mathscr{L}_X$  given by~\eqref{eq:generatorX}. Then on the extended probability space there exists
	a Poisson random measure $J$ on $\RR_+\times\RR^d$ with a mean measure $\leb\otimes\nu$ and adapted to an extended filtration $(\mathcal{G}_t)_{t\in \RR_+}$ such that~\eqref{XjumpsAndJ} holds.
\end{prop}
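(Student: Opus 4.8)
The plan is to recover a driving point process on $\RR^d$ by choosing, at each jump time of $X$, one of the (possibly many) geodesic directions that could have produced that jump, and then superimposing independent Poisson noise to account for the directions that induce no jump and for times after $\tau$. Fix the measurable section $q$ and set $\Psi_p(z):=\Exp_p(q(p)z)$ and $D_p:=\Psi_p^{-1}(\{p\})\subseteq\RR^d$ for $p\in M$; the map $(p,z)\mapsto\Psi_p(z)$ is jointly measurable since $q$ is measurable and $\Exp$ is smooth. Evaluating \eqref{eq:generatorX} at the frame $v=q(p)$, the jump part of $\mathscr{L}_X$ at $p$ equals $\int_{\RR^d\setminus\{0\}}\big(f(\Psi_p(x))-f(p)-\ind(\abs{x}<1)x^iH_i(f\circ\pi)(q(p))\big)\,\nu(\d x)$, so, applying Dynkin's formula for the generator $\mathscr{L}_X$ on $C^\infty_c(M)$ and localising along an announcing sequence of $\tau$, one reads off the \levy{} system of $X$: the compensator on $[0,\tau)$ of the jump measure $\mu^X(\d t,\d y):=\sum_{s}\ind(X_s\neq X_{s-})\delta_{(s,X_s)}(\d t,\d y)$ on $\RR_+\times M$ is $\d t\,\eta(X_{t-},\d y)$, where $\eta(p,\cdot):=(\Psi_p)_*\big(\nu|_{\RR^d\setminus D_p}\big)$ (the $\ind(\abs{x}<1)x^iH_i(f\circ\pi)(q(p))$ contribution is first order and does not enter the jump compensator). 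In particular $X$ jumps only to points of the form $\Exp_{X_{t-}}(q(X_{t-})z)$, which is what makes a choice of direction possible.

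First I would disintegrate $\nu$ along the jump map: partitioning $\RR^d\setminus\{0\}$ into countably many Borel sets of finite $\nu$-measure and disintegrating each along $\Psi_p$, the disintegration theorem for $\sigma$-finite kernels between Polish spaces provides a measurable family of probability measures $\lambda_p^y$ on $\RR^d$ ($p,y\in M$), with $\lambda_p^y$ supported on $\Psi_p^{-1}(\{y\})$ and $\nu|_{\RR^d\setminus D_p}=\int_M\lambda_p^y\,\eta(p,\d y)$ for every $p$. This is the step that disposes of the non-uniqueness of geodesics between $X_{s-}$ and $X_s$: one fixes once and for all a measurable version of the conditional law of a geodesic direction given its endpoint.

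Then I would build $J$ on an enlargement of the probability space carrying, independently of $X$, the randomisers for the marking below and a Poisson random measure $N'$ on $\RR_+\times\RR^d$ with mean $\leb\otimes\nu$. First, mark each atom $(s,X_s)$ of $\mu^X$ with $s<\tau$ by an independent $Z_s$ with conditional law $\lambda_{X_{s-}}^{X_s}$, obtaining a point process $J_1$ on $\RR_+\times\RR^d$; by the predictable marking theorem for point processes and the disintegration identity above, $J_1$ has compensator $\d t\,\nu|_{\RR^d\setminus D_{X_{t-}}}(\d z)$ on $[0,\tau)$ and no atoms on $[\tau,\infty)$. Second, let $J_2$ be the restriction of $N'$ to the predictable set $\{(t,z):t<\tau,\ z\in D_{X_{t-}}\}$ (predictable because $X_-$ and $\tau$ are predictable and $\Psi$ is jointly measurable); as a thinning of an independent Poisson random measure by a predictable set, $J_2$ has compensator $\d t\,\nu|_{D_{X_{t-}}}(\d z)$ on $[0,\tau)$. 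Finally set $J:=J_1+J_2$ on $[0,\tau)\times\RR^d$ and $J:=N'$ on $[\tau,\infty)\times\RR^d$. The three compensators sum to $\d t\,\nu(\d z)$ on all of $\RR_+\times\RR^d$, which is deterministic, so by the characterisation of Poisson random measures via their compensators (the same tool used in the proof of Lemma~\ref{lemma:transPoissMeasure}; see \cite{KallenbergMarked1990}) the measure $J$ is Poisson with mean $\leb\otimes\nu$. Since such a $J$ is a.s.\ simple with all atoms at distinct times, and the atoms of $J_2$ and of $N'|_{[\tau,\infty)}$ fall at times a.s.\ disjoint from the jump times of $X$, property \eqref{XjumpsAndJ} is immediate: an atom $(s,z)$ of $J$ with $s<\tau$ is either a marked jump, in which case $X_s=\Psi_{X_{s-}}(z)=\Exp_{X_{s-}}(q(X_{s-})z)$ by construction, or a $J_2$-atom, in which case $z\in D_{X_{s-}}$ and $s$ is not a jump time, so $X_s=X_{s-}=\Exp_{X_{s-}}(q(X_{s-})z)$; and if $J(\{s\}\times\RR^d)=0$ for some $s<\tau$, then $s$ is not a jump time of $X$ (each of which carries a $J_1$-atom), so $X_s=X_{s-}$.

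The main obstacle is the measurable disintegration of Step~1 together with the bookkeeping in Step~2. Because $\nu(D_p)$ can be strictly positive — for instance when there are closed geodesics through $p$, or a conjugate locus of positive $\nu$-measure — one cannot produce $J$ merely by marking the jumps of $X$; the ``missing'' mass $\nu|_{D_{X_{t-}}}$ must be supplied by independent noise, and one has to check that the three compensators add up to exactly $\leb\otimes\nu$, which is what dictates the precise splitting above. The remaining points — joint measurability of $(p,z)\mapsto\Psi_p(z)$, predictability of $\{(t,z):t<\tau,\,z\in D_{X_{t-}}\}$, extraction of the \levy{} system from the generator, and invoking the disintegration and marking theorems in the $\sigma$-finite setting — are routine.
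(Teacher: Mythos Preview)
Your proposal is correct and follows essentially the same approach as the paper: extract the \levy{} system of $X$ from the generator, disintegrate $\nu$ along the map $(p,z)\mapsto\Exp_p(q(p)z)$ to randomly select a geodesic direction at each jump (the paper packages this via a measurable function $\widetilde{k}(x,y,u)$ with a uniform auxiliary randomiser in the style of \c{C}inlar--Jacod, while you use the equivalent formulation via conditional kernels $\lambda_p^y$), then supply the missing mass on $D_{X_{t-}}$ and on $[\tau,\infty)$ from an independent Poisson random measure, and conclude by checking that the total compensator is the deterministic measure $\leb\otimes\nu$. The three-piece decomposition $J_1+J_2+N'|_{[\tau,\infty)}$ you write down matches the paper's construction $\overline{J}+\text{(thinned }N\text{)}$ term for term.
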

Before proceeding with the proof, we discuss the phenomena that may occur when constructing $J$. These phenomena illustrate why typically the process $X$ alone does not possess 
sufficient information and randomness (even on $[0,\tau)$) to carry out the construction.
\begin{enumerate}[(i)]
	\item Let $X_s =y\neq X_{s-}=x$, such that $B_{xy}:=\set{z\in \RR^d\backslash\{0\}}{\Exp_{x}(q(x)z)=y}$ contains exactly one element $z'$. Then it is clear that we need to put $J(\{(s,z')\})=1.$  
	\item Again let $X_s =y\neq X_{s-}=x$, but now the set $B_{xy}$ contains more than one element\footnote{These sets are almost surely non-empty due to the form of a generator of $X$.}. Of course, it is clear that we should have
	$J(\{s\}\times B_{xy})=1,$ but it is not clear which element should be chosen as the atom. If we simply fixed one possible element of $B_{xy}$ for every pair $(x,y)$, this would correctly
	prescribe the jumps of $X$ and satisfy~\eqref{XjumpsAndJ}, but $J$ might not have the correct
	intensity measure $\leb\otimes\nu$. Therefore, in this case we need some additional randomness to appropriately randomly assign the atom on the set $B_{xy}.$
	\item Finally, it is possible that $X_s =X_{s-}=x$ and there can still be jumps (atoms) in $J$ (see also the discussion following Definition~\ref{def:horprocess}). This can happen since the set $B_x:=\set{z\in\RR^d\backslash\{0\}}{\Exp_{x}(q(x)z)=x}$ may be non-empty. Thus, even if $J(\{s\}\times B_x)=1$, 
	actually no jump of $X$ occurs at time $s$. Naively, we might simply ignore these jumps, but then again the intensity measure of $J$ might not be equal to $\leb\otimes\nu.$ Therefore, if we wish to get a correct Poisson
	random measure, we need to add some of these jumps (atoms) as well, even though they will not manifest themself as jumps of $X$. They will have to be taken from an additional (independent) Poisson random
	measure on the extended probability space which we will also have to use to define $J$ after time $\tau$.
\end{enumerate}

This illustrates that for a general process $X$ we will need to extend our probability space (and filtration) to allow for this additional randomness. This is done in~\cite[Section 4]{CJrepr}, albeit they only consider a
measure $\nu$ on a real line, but essentially every step generalizes to our case of a multidimensional measure. The only step which needs some adjustment due to a slightly different setup is the following.
For ease of notation, put $k(x,z)=\Exp_{x}(q(x)z)$ and note that we could have written the jumps in the integral part of generator $\mathscr{L}_X$ in~\eqref{eq:generatorX} via the kernel $K\colon M \times \mathcal{B}(M)\to \RR_+$ defined by
$$K(x,A)=\int_{\RR^d}\ind(k(x,z)\in A)\nu(\d z) \text{ for } A\in\mathcal{B}(M\backslash\{x\}),$$
and this means that the equivalent of \cite[Lemma~3.4]{CJrepr} holds. Furthermore, the following equivalent of \cite[Lemma~4.32]{CJrepr} holds.

\begin{lemma} \label{lemma:kernelDisintegration}
	Let $K$ be a kernel as above. Then there exists a measurable function $\widetilde{k}\colon M\times M\times(0,1]\to \RR^d,$ such that 
	$$\int_0^1\int_A\ind(\widetilde{k}(x,y,u)\in B) K(x,\d y)\d u=\int_B\ind(k(x,z)\in A)\nu(\d z)$$ for every $A\in\mathcal{B}(M),B\in \mathcal{B}(\RR^d)$ and $x\in M$. Moreover, for every $x\in M$ and $K(x,\d y)\d u$ almost every $(y,u)$ we have
	$$k(x,\widetilde{k}(x,y,u))=y. $$ 
\end{lemma}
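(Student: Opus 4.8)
The plan is to construct $\widetilde k$ by disintegrating the \levy{} measure $\nu$ along the map $z\mapsto k(x,z)$ and then randomising each conditional law with an auxiliary uniform variable, following the argument of \cite[Lemma~4.32]{CJrepr}; the only point requiring care is that every construction be jointly Borel in the parameter $x$. Throughout, $M$ (being a second countable manifold, hence Polish) and $\RR^d$ are regarded as standard Borel spaces, $k\colon M\times\RR^d\to M$ is jointly Borel, and we write $K(x,\cdot):=(k(x,\cdot))_*\nu$, which agrees off the diagonal with the kernel $K$ and is a Borel kernel by the analogue of \cite[Lemma~3.4]{CJrepr} mentioned above. As $\nu$ is only $\sigma$-finite, fix a Borel partition $\RR^d\setminus\{0\}=\bigsqcup_{n\in\N}C_n$ with $\nu(C_n)<\infty$ for all $n$.

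First I would produce a parametrised disintegration: a jointly Borel map $(x,y)\mapsto\rho^x_y$ into probability measures on $\RR^d$ such that $\rho^x_y$ is carried by $\{z:k(x,z)=y\}$ for $K(x,\cdot)$-a.e.\ $y$, and $\nu(B)=\int_M\rho^x_y(B)\,K(x,\d y)$ for every $B\in\mathcal B(\RR^d)$. On each piece $C_n$ the classical disintegration theorem applied to the finite measure $\nu|_{C_n}$ and the Borel map $k(x,\cdot)$, with reference measure the Borel kernel $K_n(x,\cdot):=(k(x,\cdot))_*(\nu|_{C_n})$, yields such a family $\rho^{x,n}_y$; the usual construction (conditional expectations along a fixed refining sequence of finite Borel partitions of $\RR^d$, followed by martingale convergence and regularisation) goes through with all objects jointly Borel in $x$. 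One then sets $\rho^x_y:=\sum_n\frac{\d K_n(x,\cdot)}{\d K(x,\cdot)}(y)\,\rho^{x,n}_y$, with the Radon--Nikodym densities chosen jointly Borel; since $\sum_n K_n(x,\cdot)=K(x,\cdot)$, this is $K(x,\cdot)$-a.e.\ a probability measure carried by $\{z:k(x,z)=y\}$, and summing the identities over $n$ gives the disintegration identity $\nu=\int_M\rho^x_y(\cdot)\,K(x,\d y)$.

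Next I would randomise. Fix a Borel isomorphism $\iota\colon\RR^d\to(0,1)$ (which exists since both are uncountable standard Borel spaces), let $F^x_y$ be the distribution function of $\iota_*\rho^x_y$, and set
\begin{equation*}
\widetilde k(x,y,u):=\iota^{-1}\bigl(\inf\{t\in(0,1):F^x_y(t)\ge u\}\bigr),\qquad (x,y,u)\in M\times M\times(0,1].
\end{equation*}
This is jointly Borel and, for each fixed $(x,y)$, transports Lebesgue measure on $(0,1]$ to $\rho^x_y$. Hence, for $A\in\mathcal B(M)$ and $B\in\mathcal B(\RR^d)$, Tonelli's theorem gives
\begin{equation*}
\int_0^1\int_A\ind(\widetilde k(x,y,u)\in B)\,K(x,\d y)\,\d u=\int_A\rho^x_y(B)\,K(x,\d y)=\int_B\ind(k(x,z)\in A)\,\nu(\d z),
\end{equation*}
where the last step uses that $\rho^x_y$ is carried by $\{k(x,\cdot)=y\}$ --- so $\ind(k(x,z)\in A)=\ind(y\in A)$ holds $\rho^x_y$-a.s.\ --- together with the disintegration identity. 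Finally, since $\rho^x_y(\{z:k(x,z)=y\})=1$ for $K(x,\cdot)$-a.e.\ $y$ and $\widetilde k(x,y,\cdot)$ has law $\rho^x_y$ under Lebesgue measure on $(0,1]$, the set $\{(y,u):k(x,\widetilde k(x,y,u))\ne y\}$ is $K(x,\d y)\,\d u$-null, which is the remaining assertion.

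The main obstacle is the \emph{parametrised} disintegration: the existence of a disintegration of a single $\sigma$-finite measure is classical, but here it must depend Borel-measurably on $x$. This is precisely the content of \cite[Lemma~4.32]{CJrepr}, whose proof uses only that the spaces involved are standard Borel and that $k$ and $K$ are Borel, so it transfers verbatim to the present multidimensional setting; the passage from finite to $\sigma$-finite $\nu$ is the routine exhaustion above, and the concluding randomisation is the standard quantile transform.
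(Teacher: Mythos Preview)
The paper does not actually prove this lemma: it is stated as the direct analogue of \cite[Lemma~4.32]{CJrepr} and then used without further argument. Your proposal therefore supplies a proof where the paper gives none; the route you take---parametrised disintegration of $\nu$ along $k(x,\cdot)$, reduction to the finite case via a Borel partition $\bigsqcup_n C_n$, and randomisation by the quantile transform through a Borel isomorphism $\iota\colon\RR^d\to(0,1)$---is exactly the standard argument behind the cited result, adapted to a multidimensional $\nu$ and with attention to joint Borel measurability in $x$. This is correct and is precisely what the authors are implicitly invoking, so there is nothing to compare against beyond noting that you have made explicit what the paper leaves to the reference.
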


\begin{proof}[Proof of Proposition~\ref{prop:constructPoissonRandomMeasure}]
	Lemma~\ref{lemma:kernelDisintegration} allows us to deduce as in \cite[\S 4c, Eq.~(4.41) and Prop.~4.42]{CJrepr} that we can construct a random measure $\overline{J}$ on the extended probability space, such that~\eqref{XjumpsAndJ} holds (with
	$\overline{J}$ taking the role of $J$) and such that the compensator measure of $\overline{J}$ is given by
	$ \ind(k(X_{t-},z)\neq X_{t-},\ t<\tau)\d t\nu(dz)$.
	Note that the indicator appears, since only observable jumps of $X$ are used during the construction of the random measure.
	Measure $\overline{J}$ is almost the required random measure. To add the missing part, consider a further extension of the probability space, which supports an independent Poisson random measure $N$ with an intensity measure $\leb\otimes\nu$, and define a random measure
	$$J(\d t, \d z)=\overline{J}(\d t, \d z)+ \left(\ind\big(k(X_{t-},z)=X_{t-},\ \overline{J}(\{t\}\times\RR^d)=0,\ t< \tau\big)+ \ind(t\ge \tau)\right)N(\d t, \d z).$$
	It is easy to see that its compensator measure is a non-random measure $\leb\otimes\nu$, which means that $J$ is a Poisson random
	measure with mean measure $\leb\otimes\nu.$ Furthermore, since added jumps (atoms) from the measure $N$ do not manifest themself as jumps of $X$, the condition~\eqref{XjumpsAndJ} still holds and this finishes the proof of proposition.
\end{proof}
\begin{proof}[Proof of Theorem~\ref{thm:charLevyManifolds}]
	Proposition~\ref{prop:constructPoissonRandomMeasure} allows us to construct a Poisson random measure $J$ and hence the associated jump process $J$ which represents jumps of $X$. Therefore, by
	Theorem~\ref{thm:uniqueLift} there exists a uniquely determined anti-development on $[0,\tau)$ induced by
	$X$ and $J$ and we denote it by $Y.$ The only thing left to do is to show that $Y$ can be extended past $\tau$ so that it is a \levy{} process with correct characteristics. 
	
	First, note that the jump measure of $Y$ is given as a transformation of $J$ where each atom at $(t,z)$ is transformed to an atom at $(t,U^{-1}_{t-}q(X_{t-})z),$ and again Lemma~\ref{lemma:transPoissMeasure}
	implies that the jump measure of $Y$ is exactly a Poisson random measure with mean measure $\leb\otimes\nu$ restricted to times smaller than $\tau.$ 
	
	To determine the remaining characteristics, we recall that $Y$ is a semimartingale on a stochastic interval $[0,\tau)$.
	We wish to use the notion of semimartingale characteristics which are classically defined for semimartingales
	with infinite lifetime. However, as the Appendix in \cite{SchnurrSemimartingaleKilling} shows, essentially every result about semimartingales can be extended to semimartingales defined up to a predictable stopping time.
	
	Recall that every semimartingale $Y$ has a representation $Y=Y_0 + M + A,$ where $M$ is a local martingale started at $0$ and $A$ is a process of locally finite
	variation, but this decomposition is not unique. To get uniqueness we define a process
	$Y^e_t=\sum_{s\le t}\Delta Y_s \ind(\abs{\Delta Y_s}\ge 1)$ and then a process $Y-Y^e$ with bounded jumps is a special semimartingale, meaning that it has a
	unique representation $Y-Y^e= M + A$, where $A$ is a predictable finite variation process and $M$ is a
	local martingale. Then the local characteristics $(A,C,\widetilde{\Gamma}$) of a semimartingale $Y$ as in \cite[Chapter~II]{JacodShiryaevLimit} are given by:
	\begin{enumerate}[(i)]
		\item $A=(A^j)_{j=1,\ldots,n}$ is a unique predictable finite variation process in the decomposition of a special semimartingale $Y-Y^e$,
		\item $C=(C^{ij})_{i,j=1,\ldots,n}$ is the quadratic variation of the continuous local martingale part $Y^c$ of $Y$ (or equivalently of $M$) and is given by $C^{ij}=[Y^{i,c},Y^{j,c}],$
		\item $\widetilde{\Gamma}$ is a predictable compensator of the jump measure  $\Gamma(\d t,\d y)=\sum_{s\le t}\ind(\abs{\Delta Y_s}>0)\delta_{(s,\Delta Y_s)}(\d t, \d y)$ associated to the process $Y$.
	\end{enumerate}
	
	We have already established, that a jump measure of $Y$ is a Poisson random measure, hence the third characteristic is equal to a non-random measure $\leb\otimes\nu.$
	
	The semimartingale $Y$ can be written explicitly via its characteristics as in \cite[Thm.~II.2.34]{JacodShiryaevLimit} and plugging this decomposition into the defining
	property~\eqref{marcusDefProp} of Marcus SDE~\eqref{eq:horlevy} for a function $f\circ\pi,$ where $f\in C^\infty(M),$ we see that
	\begin{align*}
	f(X_t)-f(X_0)&-\int_0^t\left(H_i (f\circ \pi)(U_{s-})\d A^i+\frac{1}{2}H_iH_j(f\circ \pi)(U_{s-})\d C^{ij}_s\right)\\
	&-\int_0^t\int_{\RR^d\backslash\{0\}}\left( f(\Exp_{X_{s-}}(U_{s-}x))-f(X_{s-})-\ind(\abs{x}<1)x^iH_i (f\circ \pi)(U_{s-})\right) \widetilde{\Gamma}(\d s,\d x)
	\end{align*}
	is a local martingale.
	We can compare the above expression to the generator $\mathscr{L}_X$ from \eqref{eq:generatorX} and we get
	\begin{equation} \label{eq:charateristics}
		A^j_t=b^jt, \quad C^{ij}_t=a^{ij}t, \quad \widetilde{\Gamma}(\d t, \d z)=\d t \nu(\d z),
	\end{equation}
	which are the characteristics of $Y$ for $t<\tau.$ Note that by observing the generator we can only uniquely determine
	$A$ and $C$, and there can be more than one measure $\widetilde{\Gamma}$ for which the above equality holds (this can happen since functions $z\mapsto k(x,z)$ from above are not necessarily injective).
	However, we have already identified the third characteristic
	without directly referring to the generator. 
	
	By \cite[Thm.~II.4.19]{JacodShiryaevLimit} we know that (``constant'') characteristics as in~\eqref{eq:charateristics} imply that the semimartingale is actually a \levy{} process.
	However, $Y$ is only defined for times in $[0,\tau)$ and we need to extend it to all times in $\RR_+.$ If we
	can find a \levy{} process $\widetilde{Y}=(\widetilde{Y}_t)_{t\in\RR_+}$ with characteristics~\eqref{eq:charateristics} such that $\widetilde{Y}_t=Y_t$ for $t\in[0,\tau)$, then
	$\widetilde{Y}$ is also an anti-development of $X$ and it is exactly the \levy{} process we needed to construct to prove the theorem. 
	Existence of such a process $\widetilde{Y}$ is guaranteed by the following lemma and this concludes the proof of the theorem.
\end{proof}

\begin{lemma}
	Let $Y$ be a semimartingale on $[0,\tau)$ with characteristics~\eqref{eq:charateristics} for $t<\tau$, where $\tau$ is a predictable stopping time. Then there exists a \levy{} process
	$\widetilde{Y}=(\widetilde{Y}_t)_{t\in{\RR_+}}$ with characteristics~\eqref{eq:charateristics} such that $\widetilde{Y}_t=Y_t$ for $t\in [0,\tau).$
\end{lemma}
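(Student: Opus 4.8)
The plan is to show first that $Y$ does not explode as $t\uparrow\tau$, then to concatenate an independent \levy{} process onto $Y$ at the predictable time $\tau$, and finally to identify the resulting process as a \levy{} process via the exponential martingale characterisation.

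For the non-explosion, fix $K\in\N$ and argue on $\{\tau\le K\}$ (this suffices, as $\{\tau<\infty\}=\bigcup_K\{\tau\le K\}$). For each $n$ the stopped process $Y^{\tau_n}$ is a genuine $\RR^d$-valued semimartingale, and the assumed characteristics~\eqref{eq:charateristics} give it the \levy{}--It\^o decomposition \cite[Ch.~II]{JacodShiryaevLimit} into the drift $b(t\wedge\tau_n)$, a continuous martingale part $Y^c$ with $[Y^{c,i},Y^{c,j}]_t=a^{ij}(t\wedge\tau_n)$, a compensated integral of the small jumps, and a sum of the large jumps, the compensator of the jump measure restricted to $[0,\tau_n]$ being $\d s\,\nu(\d x)$. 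Stopping in addition at $K$: the drift converges to $b(t\wedge\tau)$; the continuous martingale part is $L^2$-bounded ($[Y^c]\le\operatorname{tr}(a)K$) and hence converges a.s.\ with a left limit; the compensated small-jump integral is $L^2$-bounded (its predictable quadratic variation is $\le K\int_{\abs{x}<1}\abs{x}^2\nu(\d x)<\infty$) and likewise converges; and the large-jump part becomes a finite sum, since the number of atoms of the jump measure of $Y$ in $[0,\tau)\times\{\abs{x}\ge1\}$ has a compensator bounded by $\nu(\abs{x}\ge1)K<\infty$ and is therefore a.s.\ finite. Hence $Y_{\tau-}:=\lim_{t\uparrow\tau}Y_t$ exists a.s.\ on $\{\tau<\infty\}$. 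This is exactly where the deterministic, finite-rate form of~\eqref{eq:charateristics} is needed: a semimartingale on a stochastic interval need not converge at the endpoint in general.

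Next, enlarge the probability space to carry a \levy{} process $Y'=(Y'_t)_{t\in\RR_+}$ with generating triplet $(a,\nu,b)$, independent of everything describing the pair $(Y,\tau)$, and put $\widetilde Y_t:=Y_t$ for $t<\tau$ and $\widetilde Y_t:=Y_{\tau-}+Y'_t-Y'_\tau$ for $t\ge\tau$ (with $\widetilde Y:=Y$ on $\{\tau=\infty\}$), with respect to the filtration obtained from the original filtration of $Y$ stopped at $\tau$ by adjoining the post-$\tau$ increments of $Y'$. By the previous step $\widetilde Y$ is \cadlag{} with $\widetilde Y_0=0$, it agrees with $Y$ on $[0,\tau)$ and is continuous at $\tau$, the time $\tau$ remains predictable, $Y_{\tau-}$ is $\mathcal F_\tau$-measurable (being the a.s.\ limit of the $\mathcal F_{\tau_n}$-measurable variables $Y_{\tau_n}$), and — since $Y'$ is independent of the original $\sigma$-algebra and $\tau$ is $\mathcal F_\tau$-measurable — conditionally on $\mathcal F_\tau$ the process $(Y'_{\tau+u}-Y'_\tau)_{u\ge0}$ is a \levy{} process with triplet $(a,\nu,b)$ independent of $\mathcal F_\tau$.

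Finally, with $\eta(\xi):=\mathrm i\langle b,\xi\rangle-\tfrac12\langle\xi,a\xi\rangle+\int_{\RR^d}(\mathrm e^{\mathrm i\langle\xi,x\rangle}-1-\mathrm i\langle\xi,x\rangle\ind(\abs{x}<1))\,\nu(\d x)$ the \levy{} exponent of $(a,\nu,b)$, it is enough to prove that $M^\xi_t:=\exp(\mathrm i\langle\xi,\widetilde Y_t\rangle-t\,\eta(\xi))$ is a martingale for every $\xi\in\RR^d$, since then $\widetilde Y$ has stationary independent increments with characteristic function $\mathrm e^{t\eta(\xi)}$ and is thus a \levy{} process with triplet $(a,\nu,b)$ (equivalently, with characteristics~\eqref{eq:charateristics}). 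On $[0,\tau)$ the characteristics~\eqref{eq:charateristics} make $M^\xi$ a local martingale (the description of the characteristics of a semimartingale via the local martingale property of the exponentials $\mathrm e^{\mathrm i\langle\xi,\cdot\rangle}$, \cite[Ch.~II]{JacodShiryaevLimit}); on each $[0,\tau_n\wedge K]$ it is bounded, hence a true martingale, and together with the non-explosion and uniform integrability on $\{\tau\le K\}$ this makes $M^\xi$ a martingale on $[0,\tau]$. For $u\ge0$ one has $M^\xi_{\tau+u}=M^\xi_\tau\cdot\exp(\mathrm i\langle\xi,Y'_{\tau+u}-Y'_\tau\rangle-u\,\eta(\xi))$ with $M^\xi_\tau$ being $\mathcal F_\tau$-measurable and, by the previous paragraph, the second factor a martingale started at $1$; hence $(M^\xi_{\tau+u})_{u\ge0}$ is a martingale. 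Pasting the two pieces at $\tau$ (optional stopping and the tower property) shows $M^\xi$ is a martingale on $\RR_+$, which finishes the proof. The only real obstacle is the non-explosion step; the rest is bookkeeping around the enlarged filtration and the conditional independence of the concatenated increments.
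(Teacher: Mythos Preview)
Your argument is correct and follows a genuinely different route from the paper's. The paper works at the level of the \levy{}--It\^o components: it first uses an auxiliary independent \levy{} process $Z$ to produce, for each $n$, a bona fide \levy{} process $Z^{(n)}$ agreeing with $Y$ on $[0,\tau_n]$, extracts the consistent family of Brownian parts $B^{(n)}$, shows $B$ has a left limit at $\tau$ via BDG, extends it by an independent Brownian motion, and separately extends the jump measure by an independent Poisson random measure; $\widetilde Y$ is then \emph{defined} through its \levy{}--It\^o decomposition. You instead work directly with $Y$ itself: you prove non-explosion of the whole process from the canonical semimartingale representation (handling drift, continuous martingale part, compensated small jumps and large jumps by elementary $L^2$/counting bounds), concatenate a single independent \levy{} process at $\tau$, and verify the result via the exponential-martingale characterisation of \levy{} processes.

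Your approach is more economical --- one auxiliary process instead of three, and no need to disentangle the Brownian and Poisson pieces --- and the verification step via $M^\xi$ is clean. The paper's approach has the minor expository advantage that the \levy{}--It\^o structure of $\widetilde Y$ is explicit by construction, which fits the surrounding narrative (the proof of Theorem~\ref{thm:charLevyManifolds} already works with the jump measure of $Y$ as a Poisson random measure). Both arguments hinge on the same substantive point, namely that the deterministic, linearly-growing characteristics~\eqref{eq:charateristics} force $Y_{\tau-}$ to exist on $\{\tau<\infty\}$; after that, the rest is indeed bookkeeping. Your handling of the enlarged filtration and the pasting at $\tau$ is terse but standard, and no idea is missing.
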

\begin{proof}
	Let $\set{\tau_n}{n\in \N}$ be an announcing sequence of $\tau.$ Consider an additional \levy{} process $Z$ with characteristics~\eqref{eq:charateristics} which is independent of $Y$.
	For every $n\in \N$ we may define a process $Z^{(n)}=(Z^{(n)}_t)_{t\in\RR_+}$ by
	$$Z^{(n)}_t:= Y^{\tau_n}_{t} + \ind(t\ge \tau_n)(Z_t-Z_{\tau_n})$$ for $t\in\RR_+$. Since characteristics of $Z^{(n)}$
	are also given by~\eqref{eq:charateristics}, it is a \levy{} process and obviously $Z^{(n)}_t=Y_t$ holds for $t\in[0,\tau_n]$. 
	\levy{} process $Z^{(n)}$ has the \levy-It\^o decomposition
	$$Z^{(n),i}_t=b^i t + \sigma^i_j B^{(n),j}_t + \int_0^t\int_{0<\abs{x}<1}x^i\widetilde{N^{(n)}}(\d s,\d x) +  \int_0^t\int_{\abs{x}\ge 1}x^i N^{(n)}(\d s,\d x) $$
	for $i=1,
	\ldots, d, $ where $B^{(n)}$ is a standard Brownian motion and $N^{(n)}$ a Poisson random measure with mean
	measure $\leb \otimes \nu$. Note that $N^{(n)}$ is equal to the jump measure $\Gamma$ of $Y$ on $[0,\tau_n].$
	Furthermore, for $m<n$ we deduce from
	$Z^{(n)}_t=Z^{(m)}_t$ for $t\in[0,\tau_m]$ that also $B^{(n)}_t=B^{(m)}_t$ for $t\in[0,\tau_m]$, so that we
	may unambiguously define a continuous process $B=(B_t)_{t\in[0,\tau)}$ by $B_t=B^{(n)}_t$ for $t\in[0,\tau_n].$
	
	Let us assume that there exists 
	a Brownian motion $\widetilde{B}=(\widetilde{B}_t)_{t\in\RR_+}$ such that $\widetilde{B}_t=B_t$ for  $t\in[0,\tau).$ We may then
	consider a Poisson random measure $N'$ with mean measure $\leb \otimes \nu$ which is independent of $Y$ and	$\widetilde{B}$ and define a random measure
	$$N(\d t, \d x) := \ind(t<\tau)\Gamma(\d t, \d x) + \ind( t\ge \tau)N'(\d t, \d x),$$ which is also a
	Poisson random measure with mean measure $\leb \otimes \nu$ since $\leb \otimes \nu$ is its compensator measure.
	Hence we may define a process $\widetilde{Y}=(\widetilde{Y}_t)_{t\in\RR_+}$ by 
	$$\widetilde{Y}^i_t:=b^i t + \sigma^i_j \widetilde{B}^j_t + \int_0^t\int_{0<\abs{x}<1}x^i\widetilde{N}(\d s,\d x) +  \int_0^t\int_{\abs{x}\ge 1}x^i N(\d s,\d x) $$
	for $i=1,\ldots, d.$ It is clear that $\widetilde{Y}$ is a \levy{} process with
	characteristics~\eqref{eq:charateristics} and since $\widetilde{B_t}=B_t$ for $t\in [0,\tau)$ and
	$N=\Gamma$ on $[0,\tau)$ we also have $\widetilde{Y}_t=Y_t$ for $t\in[0,\tau),$ so $\widetilde{Y}$ is the required process.
	
	Therefore, in order to finish the proof we only have to show the existence of Brownian motion $\widetilde{B}$
	as above. Note that since $B_t=B^{(n)}_t$ for $t\in[0,\tau_n]$, the process $B$ is a continuous local martingale on
	$[0,\tau)$ (see the definition in \cite[Section~3]{GetoorSharpeConformalMartingales} or in
	\cite[Exercise~(1.48)~in~Ch.~IV]{revuzyor}) and has a quadratic covariation $\langle B^i,B^j\rangle_t=\delta^{ij}t$ on $[0,\tau)$ for $i,j=1,\ldots,d.$ 
	For $m\in \N$ consider a constant stopping time $T_m \equiv m$ and a stopped process $B^{T_m},$ which is also a continuous local martingale on $[0,\tau)$.
	We have a trivial bound $\langle B^{T_m,i}\rangle_\tau \le m$ for $i=1,\ldots,d$, so we can conclude
	from the Burkholder-Davis-Gundy inequality \cite[Thm.~(4.1)~in~Ch.~IV]{revuzyor} that $\E[\sup_{t<\tau}|B^{T_m}_t|]<\infty$ and it then follows that the limit $\lim_{t\to\tau}B^{T_m}_t$ exists
	(see the discussion after \cite[Def.~3.1]{GetoorSharpeConformalMartingales} and
	\cite[Exercise~(1.48)~in~Ch.~IV]{revuzyor}). On $\{\tau<\infty\}$ we have that $T_m\ge \tau$ from some sufficiently large $m$ onward, so also the limit $\lim_{t\to\tau}B_t$ exists on the same event.
	As we may define $B_\tau := \lim_{t\to\tau}B_t$ on $\{\tau<\infty\}$ it becomes
	meaningful to consider a continuous stopped process $B^{\tau}$ and we can define a continuous process $(\widetilde{B}_t)_{t\in\RR_+}$ by
	$$\widetilde{B}_t = B^{\tau}_t + \ind(t\ge \tau)(B'_t - B'_\tau),$$ where $(B'_t)_{t\in\RR_+}$ is another
	Brownian motion independent of $B$. It is easily shown by \levy{}'s characterization that $\widetilde{B}$
	is also a Brownian motion and obviously $\widetilde{B}_t=B_t$ holds for $t\in[0,\tau),$ so it is the required process needed to finish the proof of the lemma.	
\end{proof}

\subsection{Proofs of results in Section~\ref{levyonlie}} \label{proofs:levyonLie}
\begin{proof}[Proof of Proposition~\ref{prop:sol_is_levy}]
	We recall a \levy{}-It\^o decomposition of a \levy{} process $Y$ as in~\eqref{levyitodecomp}.
	Since $X$ solves Marcus SDE~\eqref{sde:levyX} we can take $f\in C^{\infty}(G)$ and substitute~\eqref{levyitodecomp} into the defining property of the Marcus SDE to get
	\begin{align*}
	f(X_t)&=f(X_0)+b^i\int_0^t V^L_i f(X_{s-})\d s +\sigma_j^i 
	\int_0^t V^L_i f(X_{s-})\circ \d B^j_s\\
	&+\int_0^t\int_{0<\abs{x}<1}x^iV^L_i f(X_{s-})\widetilde{N}(\d s,\d x) 
	+\int_0^t\int_{\abs{x}\ge 1}x^iV^L_i f(X_{s-})N(\d s,\d x)\\
	&+ \int_0^t\int_{\RR^d\backslash\{0\}}\left( f(X_{s-}\widetilde{\exp}(x))-f(X_{s-})-x^iV^L_i f(X_{s-})\right)N(\d s,\d x).
	\end{align*}
	After splitting the domain in the last integral and some algebraic manipulation using $\widetilde{N}(\d s, \d x)=N(\d s, \d x) - \d s\, \nu(\d x)$ we get
	\begin{align}
	f(X_t)&=f(X_0)+b^i\int_0^t V^L_i f(X_{s-})\d s +\sigma_j^i\int_0^t V^L_i f(X_{s-})\circ \d B^j_s \label{eq:substituted}\\
	&+\int_0^t\int_{0<\abs{x}<1}\left( f(X_{s-}\widetilde{\exp}(x))-f(X_{s-})\right)\widetilde{N}(\d s,\d x) \nonumber \\
	&+\int_0^t\int_{\abs{x}\ge 1}\left( f(X_{s-}\widetilde{\exp}(x))-f(X_{s-})\right)N(\d s,\d x) \nonumber \\
	&+ \int_0^t\int_{0<\abs{x}<1}\left( f(X_{s-}\widetilde{\exp}(x))-f(X_{s-})-x^iV^L_i f(X_{s-})\right)\d s\, \nu(\d x) \nonumber.
	\end{align}
	
	We recall that $\widetilde{\exp}$ is a diffeomorphism from some neighbourhood $O$ of $0\in \RR^d$ to some neighbourhood $U$ of the  unit $e\in G$ and we may assume that $O\subseteq\set{x\in\RR^d}{\abs{x}<1}.$ We take a smaller open set $\widetilde{O}$ with $0\in O$ and $\mathrm{cl}(\widetilde{O})\subseteq O$ and let
	$\widetilde{U}:=\widetilde{\exp}(\widetilde{O}).$ Then we can find $l\in C^{\infty}(\RR^d,\RR^d)$ such that $l(x)=x$ on $\mathrm{cl}(\widetilde{O})$ and $\supp l \subseteq O.$ Given such a function $l$, we define $\xi\in C^{\infty}(G,\RR^d)$ by
	$$ \xi(h)=\begin{cases}
	0,& h \in \widetilde{\exp}(\supp l)^c\\
	l(\widetilde{\exp}^{-1}(h)),& h \in U.
	\end{cases}$$ Function $\xi$ is smooth since it is smooth on both open domains and agrees on their intersection. We can check that $l(x)=\xi(\widetilde{\exp}(x))$ holds. Moreover, simple calculations show that $\xi(e)=0$ and $V^L_j\xi^i(e)=\delta_j^i$.
	This means that functions $\xi^i$ are suitable coordinate functions on the neighbourhood of $e$ and since
	$\int_G\sum_{i=1}^d (\xi^i(h))^2\mu(\d h)=\int_{\RR^d}\sum_{i=1}^d(l^i(x))^2\nu(\d x)<\infty$, $\mu(\{e\})=0$ and $\mu(A^c)=\nu(\widetilde{\exp}^{-1}(A^c))<\infty$ for any open $A\ni e,$ the measure $\mu$ is a indeed \levy{} measure on $G$ (see \cite[Ch.~1]{liao_levy} for definitions).
	
	Then the last integral in~\eqref{eq:substituted} can be expressed as
	\begin{align}
	\int_0^t\int_{0<\abs{x}<1}\left( f(X_{s-}\widetilde{\exp}(x))-f(X_{s-})-\xi^i(\widetilde{\exp}(x))V^L_i f(X_{s-})\right)\d s\, \nu(\d x)
	+ c^i \int_0^t V^L_i f(X_{s-})\d s\label{eq:sub_l}
	\end{align}
	where $c^i=\int_{0<\abs{x}<1}(l^i(x)-x^i)\nu(\d x)$ for $i=1,\ldots,d$ are finite since $l(x)=x$ near $0$. We then plug~\eqref{eq:sub_l} back into~\eqref{eq:substituted}, use the fact that $l(x)=\xi(\widetilde{\exp}(x))=0$
	for $\abs{x}>1$ and manipulate the integral in the third line with $\widetilde{N}(\d s, \d x)=N(\d s, \d x) - \d s\, \nu(\d x)$ to get
	\begin{align}
	f(X_t)&=f(X_0)+\widetilde{b}^i\int_0^t V^L_i f(X_{s-})\d s +\sigma_j^i\int_0^t V^L_i f(X_{s-})\circ \d B^j_s \label{eq:plug_back_in} \\
	&+\int_0^t\int_{\RR^d\backslash\{0\}}\left( f(X_{s-}\widetilde{\exp}(x))-f(X_{s-})\right)\widetilde{N}(\d s,\d x) \nonumber \\
	&+ \int_0^t\int_{\RR^d\backslash\{0\}}\left( f(X_{s-}\widetilde{\exp}(x))-f(X_{s-})-\xi^i(\widetilde{\exp}(x))V^L_i f(X_{s-})\right)\d s\, \nu(\d x) \nonumber
	\end{align}
	where $\widetilde{b}^i=b^i+c^i.$ Additionally, we define a Poisson random measure $N'$ on $\RR_+\times (G\backslash\{e\})$ by $N'([0,t],A')=N([0,t],\widetilde{\exp}^{-1}(A'))$, where $A'\subseteq G\backslash\{e\}$ is
	any measurable set. This means that mean measure of $N'$ is $\leb\otimes\mu$ and we also define a
	compensator $\widetilde{N}'([0,t],A')=N'([0,t],A')-t\mu(A).$ We then use a change-of-variables formula for the last two integrals in~\eqref{eq:plug_back_in} to get 
	\begin{align*}
	f(X_t)&=f(X_0)+\widetilde{b}^i\int_0^t V^L_i f(X_{s-})\d s +\sigma_j^i\int_0^t V^L_i f(X_{s-})\circ \d B^j_s \\
	&+\int_0^t\int_{G\backslash\{e\}}\left( f(X_{s-}h)-f(X_{s-})\right)\widetilde{N}'(\d s,\d h) \\
	&+ \int_0^t\int_{G\backslash\{e\}}\left( f(X_{s-}h)-f(X_{s-})-\xi^i(h)V^L_i f(X_{s-})\right)\d s\, \mu(\d h).
	\end{align*}
	Technically, after using a change-of-variables formula we get integration against measure $\widetilde{\mu}:=\widetilde{\exp}_*\nu$ and the Poisson random measure corresponding to that measure with the domain of the integration being the whole group $G$, but since
	both integrands vanish at $e$, we can instead use $\mu$ and $\widetilde{N}'$, and the domain of integration reduces to $G\backslash\{e\}.$ This last stochastic integral equation for $X$ is also known as a \levy{}-It\^o decomposition
	of a \levy{} process in a Lie group (see \cite[Thm.~1.2]{liao_levy}) and it indeed follows that $X$ is a (left) \levy{} process with a \levy{} measure $\mu$.
\end{proof}

\begin{proof}[Proof of Proposition~\ref{prop:reprLevyOnLieViaHor}]
	The \textbf{only if} part is a consequence of Proposition~\ref{prop:sol_is_levy} so we now focus on the \textbf{if} part.
	The generator of a (left) \levy{} process is identified in \cite[Thm.~1.1]{liao_levy}. If we are able to find a \levy{} measure $\nu$ on $\RR^d$ such that
	$\mu=\widetilde{\exp}_*\nu|_{G\backslash\{e\}}$ holds, we may then reverse the computations in the proof of Proposition~\ref{prop:sol_is_levy} to express the generator of a left \levy{} process in a form on which we may 
	use the	characterization from Theorem~\ref{thm:charLevyManifolds} and thus find a required process $Y$.
	
	To construct such a suitable measure $\nu$ we first take open sets $U\subseteq G$ and $O\subseteq\RR^d$ such that $\widetilde{\exp}\colon O\to U$ is a diffeomorphism. This
	allows us to define a measure $\rho_O$ on $O$ by $\rho_O=\widetilde{\exp}^{-1}_*(\mu|_U).$ Since $\mu(\mathrm{Im}(\exp)^c)=0,$ we can treat $\mu$ as a measure on $\mathrm{Im}(\exp)$.  We denote
	$F= \widetilde{\exp}^{-1}(U^c)$ and then the restricted map $\widetilde{\exp}\colon F\to \mathrm{Im}(\exp)\cap U^c$ is
	still surjective. Since $\mu$ is a finite measure on $\mathrm{Im}(\exp)\cap U^c$ we can then use \cite[Prop.~1.101]{doberkat2007stochastic} to show that there exists a measure $\rho_F$ on $F$ such that
	$\widetilde{\exp}_*\rho_F=\mu|_{\mathrm{Im}(\exp)\cap U^c}.$ Actually, \cite{doberkat2007stochastic} deals with sub-probability measures but all of the results extend to finite measures. Finally, the measure $\nu$ on $\RR^d$ is
	defined by $\nu(A)=\rho_O(A\cap O)+\rho_F(A\cap F)$ for a measurable set $A$. Clearly $\mu=\widetilde{\exp}_*\nu|_{G\backslash\{e\}}$ holds and, moreover, $\nu$ is a \levy{} measure since
	$\nu(\{0\})=\mu(\{e\})=0, \nu(O^c)=\mu(U^c)<\infty$ and $\int_{O}\sum_{i=1}^d (x^i)^2\nu(\d x)=\int_U\sum_{i=1}^d(\xi^i(h))^2\mu(\d h)<\infty,$ where $\xi^i$ are coordinate functions introduced in the proof of Proposition~\ref{prop:sol_is_levy}.  
\end{proof}

\section*{Acknowledgements}
AM and VM are supported by The Alan Turing Institute under the EPSRC grant EP/N510129/1;
AM supported by EPSRC grant EP/P003818/1 and the Turing Fellowship funded by the Programme
on Data-Centric Engineering of Lloyd’s Register Foundation; VM supported by the PhD scholarship of Department of Statistics, University of Warwick.

\appendix

\section{Connections, geodesics, horizontal lift and anti-development} \label{app:connections}

A \emph{(linear) connection}  is given in terms of a covariant derivative, which prescribes how to derive vector fields along each other. A \emph{covariant derivative} is a bilinear map $\nabla\colon \Gamma(TM) \times \Gamma(TM) \to \Gamma(TM) $, which is additionally $C^\infty(M)$-linear in the first argument and obeys
a product rule in the second argument i.e.\ $\nabla_X(fY)=f\nabla_X Y+(Xf)Y.$ Covariant derivatives act locally; to compute $(\nabla_X Y)_p$ it is enough to only know the value $X_p$ and the values of $Y$
along a curve through $p$ which has a tangent vector $X_p$ at the point $p$. In local coordinates $x^i,$ the vector fields
$\partial_i:=\frac{\partial}{\partial x^i}$ form a local frame, i.e.\ the tangent vectors $\partial_i|_p$ form a basis of the tangent space $T_pM$ at any point $p$ in the local chart. Hence, we can locally express
$\nabla_{\partial_i}\partial_j=\Gamma^k_{ij}\partial_k$ for some smooth functions $\Gamma^k_{ij}$, which are known as Christoffel symbols. Christoffel symbols determine the covariant derivative. 

A vector field $\X$ along a curve $\gamma$, is said to be \emph{parallel (along the curve $\gamma$)} if $\nabla_{\dot{\gamma}}\X=0$ holds at every point of the curve. In a local chart a vector field along $\gamma$
can be expressed as $\X_{\gamma_t}=\X^k(t)\partial_k$ and a parallel vector field is then a solution of a particular system of ordinary differential equations (ODEs), namely
$$ \dot{\X}^k(t)+\Gamma^k_{ij}(\gamma_t)\dot{\gamma}^i_t\X^j(t)=0.$$
Once a curve $\gamma$ with $\gamma_0=p$ and $v\in
T_{p}M$ are fixed, the local existence and uniqueness theorem for ODEs guarantees that there is a unique parallel vector field $\X$ along $\gamma$ with $\X_{p}=v$. Since the system of ODEs is (non-autonomous) linear, the solution cannot explode and is defined along the whole curve $\gamma$.

Actually, we have
constructed a unique (parallel) lift of a curve onto a tangent bundle (once the initial point is fixed).
Additionally, using parallel vector fields along the curve $\gamma$ with $\gamma_0=p$ and $\gamma_1=q$ we can connect the tangent spaces $T_{p}M$ and $T_{q}M$. More precisely, for each $v\in T_{p}M$ we define
$\tau_\gamma(v)\in T_{q}M$ as a value at $q$ of the unique parallel vector field $\X$ (along $\gamma$) with
$\X_{p}=v.$ The map $\tau_\gamma$ is called the \emph{parallel transport along $\gamma$} and can be shown to be a linear isomorphism.

Covariant derivative  also allows us to define \emph{geodesics}, which are special curves $\gamma$ such that their velocity vector field $\dot{\gamma}$ is parallel along $\gamma$. A geodesic is locally given as a solution of the system of second order ODEs
\begin{equation} \label{eq:geodesic}
	\ddot{\gamma}^k_t+\Gamma^k_{ij}(\gamma_t)\dot{\gamma}^i_t\dot{\gamma}^j_t=0
\end{equation} 
for $k=1,\ldots,d$
and therefore a geodesic is unique once we fix an initial point and an initial velocity. This allows us to define an exponential map in
essentially the same manner as on Riemannian manifolds. A \emph{geodesic exponential map} is a map $\Exp_p\colon O\subseteq T_pM\to M, \Exp_p(X):=\gamma(1),$ where $\gamma$ is a unique geodesic with $\gamma(0)=p$ and
$\dot{\gamma}(0)=X$ and $O$ is an open star-shaped neighbourhood of $0\in T_pM$
where the geodesic exponential map is well defined\footnote{The differential
equation for a geodesic has a unique solution defined on a maximal time
interval. However, this interval might not be all of $\RR$. Hence geodesics
might explode in finite time and
the domain of the geodesic exponential map might have to be restricted.}. 
If the domain of the geodesic exponential map at $p$ is equal to $T_p$ for any $p\in M$ we say that $M$ is \emph{geodesically complete}.
Note that by Hopf-Rinow theorem compact Riemannian manifolds are always geodesically complete, but there exist non-metric connections on compact manifolds which are not geodesically complete~\cite[Ex.~7.16]{OneillRelativity}.

In Section~\ref{generalLevy} we introduced a frame bundle. On the frame bundle a notion of \emph{vertical}
vectors is naturally defined. More precisely, the vertical space is defined by $V_uF(M):=\ker d\pi_u.$ We now show how
the covariant derivative can be used to define horizontal vectors and get a splitting required for the connection. Let $u_t$ be a
smooth curve in $F(M)$ so that $\pi(u_t)$ is a smooth curve in $M$. A curve $u_t$ is called \emph{horizontal} if a vector field $u_t e$ is parallel along the curve $\pi(u_t)$ for each $e \in \RR^d.$ Additionally,  a vector in $T_uF(M)$ is called \emph{horizontal} if it is a velocity vector of a horizontal curve through $u$. The space of horizontal vectors at
$u$ is denoted by $H_uF(M).$  It can be easily checked that vertical and horizontal space intersect trivially and furthermore the equality $T_uF(M)=V_uF(M)\oplus H_uF(M)$ holds, giving us the required splitting
of tangent spaces of the frame bundle\footnote{This kind of splitting is also known as a principal Ehrenmanns connection on a principal fibre bundle, whereas covariant derivative is also known as a linear connection and can be thought
as a structure on the tangent bundle $TM$. Since $TM$ is an associated bundle of $F(M)$, (principal) connections on $F(M)$ are naturally induced by linear connections on $TM$ and vice versa \cite[Thm.~7.5 in Ch.~III]{kobayashinomizu}.}. It is also easily seen
that the following equivariance property holds: $d (R_g)_u(H_uF(M))=H_{ug}F(M)$ for all $g\in\GL(d),$ where $d(R_g)_u\colon T_uF(M)\to T_{ug}F(M)$ is a differential at $u$ of a (right) group action map $R_g\colon u \mapsto ug.$ 

Since we have the splitting, dimensionality arguments show that the differential $d\pi_u$ restricted to the horizontal space is an isomorphism between $H_uF(M)$ and $T_{\pi(u)}M.$ The inverse of this (restricted) differential
defines a \emph{horizontal lift} of tangent vectors in $M$. Moreover, horizontal lifts can be obtained for (piecewise) smooth curves. Let $\gamma$ be a smooth curve in $M$. We call a
curve $u_t$ in $F(M)$ a \emph{horizontal lift of curve $\gamma$} if $\pi(u_t)=\gamma_t$ and it is horizontal, i.e.\ $\dot{u}_s\in H_{u_s}F(M)$ for all $s$. Once an initial frame $u_0\in\pi^{-1}(\{\gamma_0\})$ is fixed, the horizontal lift is unique and exists for all
times for which the underlying curve $\gamma$ is defined. This can be seen by the following computation in a local chart. A local chart for $O\subseteq M$ around $p$ induces  a local chart for $\pi^{-1}(O)\subseteq F(M)$ around
$\pi^{-1}(p)$. A frame $u$ above $p$ can be uniquely represented as $u=(x^i,r_m^{k})$ where $ue_m=r_m^{k}\partial_k$. Then the horizontal lift of $\gamma$ is given as 
$(\gamma^i_t,r_m^{k}(t)),$ where $r^k_m$ satisfy a system of linear ODEs 
\begin{equation} \label{horliftlocal}
\dot{r}^k_m(t)+r^l_m(t)\Gamma^k_{jl}(\gamma_t)\dot{\gamma}^j_t =0.
\end{equation}
Uniqueness and existence of the horizontal lift after fixing the initial frame thus follows from the theory of ODEs and linearity of the system implies non-explosion. 

Furthermore, if $u_t$ is a horizontal lift of $\gamma$ with an initial frame $u_0$, then $u_tg$ is a horizontal lift of $\gamma$ with an initial frame $u_0g$ for any $g\in\GL(d).$ In particular, we
recover parallel transport as $\tau_\gamma=u_1u_0^{-1}$ for any horizontal lift $u_t$ of $\gamma$, since the quantity on the right hand side does not depend on the initial frame (and hence the particular horizontal lift).

Horizontal spaces can also be described by a family of horizontal vector fields. 
For any $e\in\RR^d$ a \emph{fundamental horizontal vector field $H_e$} is defined by  $H_e(u):=(d\pi_u)^{-1}(ue)\in H_uF(M)$ for any frame $u$. Since the definition is pointwise, the smoothness is
not immediate, but quickly follows from the following representation in local coordinates taken from \cite[Prop.~2.1.3]{hsumanifolds}.

\begin{lemma} \label{lemma:horInlocal}
	In terms of a local chart on $F(M)$ where $u=(x^i,r^k_m)$, we have 
	$$H_e(u)=e^ir^j_i\frac{\partial}{\partial x^j}-e^ir^j_ir^l_m\Gamma^k_{jl}(x)\frac{\partial}{\partial r^k_m},$$
	where $u=(x,r)=(x^i,r^k_m)\in F(M)$ and $e=(e^i).$
\end{lemma}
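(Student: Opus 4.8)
The plan is to verify directly that the vector field on the right-hand side of the claimed formula satisfies the two characterising properties (H1) and (H2) of the fundamental horizontal vector field $H_e$, working entirely in the induced chart on $\pi^{-1}(O)\subseteq F(M)$. Write $V(u):=e^ir^j_i\frac{\partial}{\partial x^j}-e^ir^j_ir^l_m\Gamma^k_{jl}(x)\frac{\partial}{\partial r^k_m}$ for $u=(x^i,r^k_m)$. Recall from the appendix that in this chart a frame $u$ above $p$ is represented by the matrix $(r^k_m)$ via $u e_m=r^k_m\frac{\partial}{\partial x^k}$, and that the bundle projection reads off coordinates as $\pi(x,r)=x$.

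First I would check property (H2), i.e. $d\pi_u(V(u))=u(e)$. Since $\pi$ is the coordinate projection $(x,r)\mapsto x$, its differential annihilates the $\frac{\partial}{\partial r^k_m}$ directions and maps $\frac{\partial}{\partial x^j}$ to $\frac{\partial}{\partial x^j}$; hence $d\pi_u(V(u))=e^ir^j_i\frac{\partial}{\partial x^j}$. On the other hand $u(e)=u(e^ie_i)=e^iu(e_i)=e^ir^j_i\frac{\partial}{\partial x^j}$ by the representation of the frame recalled above, so the two coincide.

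Next I would check property (H1), i.e. $V(u)\in H_uF(M)$. By equation~\eqref{horliftlocal} in the appendix, a curve $t\mapsto(\gamma^i_t,r^k_m(t))$ in $\pi^{-1}(O)$ is a horizontal lift of a curve $\gamma$ in $M$ if and only if $\dot r^k_m(t)+r^l_m(t)\Gamma^k_{jl}(\gamma_t)\dot\gamma^j_t=0$; differentiating such a lift at $t=0$ shows that the horizontal subspace $H_uF(M)$ at $u=(x,r)$ consists precisely of the vectors $w^j\frac{\partial}{\partial x^j}-r^l_m\Gamma^k_{jl}(x)w^j\frac{\partial}{\partial r^k_m}$ with $w=(w^j)\in\RR^d$ arbitrary. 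Taking $w^j=e^ir^j_i$ exhibits $V(u)$ in exactly this form, so $V(u)$ is horizontal. Since the splitting $T_uF(M)=H_uF(M)\oplus V_uF(M)$ makes $d\pi_u\colon H_uF(M)\to T_{\pi(u)}M$ an isomorphism, $V(u)$ is the unique horizontal vector projecting onto $u(e)$, whence $V(u)=H_e(u)$. Smoothness of $H_e$ (which is not obvious from the point-wise definition) and linearity of $e\mapsto H_e$ are then both immediate from the explicit formula.

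The verification is essentially routine; the only points that need care are aligning the chart conventions (that the induced coordinates represent $u$ by $(x^i,r^k_m)$ with $ue_m=r^k_m\frac{\partial}{\partial x^k}$, and that $\pi$ is the projection $(x,r)\mapsto x$) and invoking the horizontal-lift ODE~\eqref{horliftlocal} to describe $H_uF(M)$ in these coordinates. As indicated in the statement, one could alternatively just cite \cite[Prop.~2.1.3]{hsumanifolds}.
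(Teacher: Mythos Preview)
Your proof is correct. The paper itself does not prove this lemma but simply quotes it from \cite[Prop.~2.1.3]{hsumanifolds}, so there is no ``paper's own proof'' to compare against; your direct verification of properties (H1) and (H2) in the induced chart, using the horizontal-lift ODE~\eqref{horliftlocal} to characterise $H_uF(M)$, is exactly the standard argument and is carried out cleanly.
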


It is trivially seen that the map $e \mapsto H_e$ is linear. Let
$e_1,\ldots,e_d$ denote the standard base of $\RR^d$, then $H_i:=H_{e_i}$ and vector fields $H_1,\ldots, H_d$ span the whole horizontal space $H_uF(M)$ at any frame $u$. The following lemma, taken from \cite[Prop.~III.2.2]{kobayashinomizu}, relates the fundamental horizontal fields at different frames above the same point in the manifold.

\begin{lemma} \label{lemma:changeHorizontal}
	Let $e\in \RR^d$ and $g\in \GL(d).$ Then $$H_{e}(ug)=d (R_g)_u(H_{ge}(u))$$ for every $u\in F(M),$ where
	$d(R_g)_u\colon T_uF(M)\to T_{ug}F(M)$ is a differential at $u$ of a (right) group action map $R_g\colon u \mapsto ug.$ 
\end{lemma}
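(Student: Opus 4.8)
The plan is to verify that the tangent vector $d(R_g)_u\big(H_{ge}(u)\big)\in T_{ug}F(M)$ satisfies the two properties (H1) and (H2) that uniquely characterize the fundamental horizontal vector field $H_e$ evaluated at the frame $ug$; this identification then yields the claimed equality directly.

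First I would use that the right action preserves the fibres of $\pi$, so $\pi\circ R_g=\pi$ and hence $d\pi_{ug}\circ d(R_g)_u=d\pi_u$ as maps on $T_uF(M)$. Applying property (H2) for $H_{ge}$ at the frame $u$ then gives
$$d\pi_{ug}\big(d(R_g)_u(H_{ge}(u))\big)=d\pi_u\big(H_{ge}(u)\big)=u(ge).$$
Since the frame $ug$ is by definition the composition $u\circ g\colon\RR^d\to\RR^d\to T_{\pi(u)}M$ of linear maps, we have $u(ge)=(u\circ g)(e)=(ug)(e)$, which is exactly property (H2) for the frame $ug$ and the vector $e\in\RR^d$.

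Next I would check property (H1): by (H1) for $H_{ge}$ at $u$ we have $H_{ge}(u)\in H_uF(M)$, and the horizontal distribution is invariant under the right action, i.e. $d(R_g)_u(H_uF(M))=H_{ug}F(M)$ (the equivariance recorded in Appendix~\ref{app:connections}), so $d(R_g)_u(H_{ge}(u))\in H_{ug}F(M)$. Since $H_e$ is the unique vector field satisfying (H1) and (H2), we conclude $H_e(ug)=d(R_g)_u(H_{ge}(u))$ for every $u\in F(M)$.

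The argument involves no genuine difficulty; the only point requiring care is the bookkeeping around the identification of a frame with a linear isomorphism $\RR^d\to T_pM$ and the order of composition in $ug=u\circ g$. As an alternative one could compute both sides in local coordinates using the explicit formula of Lemma~\ref{lemma:horInlocal} for $H_e$ together with the coordinate description of $R_g$, but the intrinsic verification above is shorter and cleaner.
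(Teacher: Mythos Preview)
Your argument is correct: verifying (H1) via the equivariance of the horizontal distribution and (H2) via $\pi\circ R_g=\pi$ together with $u(ge)=(ug)(e)$ is exactly the standard proof. The paper does not supply its own proof of this lemma but simply cites \cite[Prop.~III.2.2]{kobayashinomizu}, where the reasoning is essentially identical to yours.
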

The following two lemmas are easy corollaries.
\begin{lemma}  \label{lemma:changeexponential}
	For any $u\in F(M),\, e\in \RR^d,\, g\in \GL(d)$ we have
	$\exp(H_e)(ug)=\exp(H_{ge})(u)\cdot g$.
\end{lemma}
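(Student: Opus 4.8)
The plan is to deduce this directly from Lemma~\ref{lemma:changeHorizontal}, which says precisely that the fundamental horizontal vector fields $H_{ge}$ and $H_e$ on $F(M)$ are $R_g$-related, where $R_g\colon F(M)\to F(M)$, $R_g(u)=ug$, is the (smooth) right translation diffeomorphism. The only other ingredient is the elementary fact that a diffeomorphism carries integral curves of a vector field to integral curves of any vector field it is related to — the same principle already used in the proof of Lemma~\ref{lemma:MarcusSDEandRelatedVF} to establish~\eqref{transformdiffeoexponential}.

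Concretely, I would argue as follows. Fix $u\in F(M)$, $e\in\RR^d$, $g\in\GL(d)$, and let $\gamma\colon[0,1]\to F(M)$ be the integral curve of $H_{ge}$ with $\gamma(0)=u$, which exists and is defined on $[0,1]$ by completeness of the fundamental horizontal vector fields (guaranteed by geodesic completeness). Consider the curve $\widetilde{\gamma}:=R_g\circ\gamma$. Then $\widetilde{\gamma}(0)=ug$ and, differentiating and using Lemma~\ref{lemma:changeHorizontal},
\[
\dot{\widetilde{\gamma}}_s = d(R_g)_{\gamma(s)}\bigl(\dot{\gamma}_s\bigr) = d(R_g)_{\gamma(s)}\bigl(H_{ge}(\gamma(s))\bigr) = H_e\bigl(\gamma(s)g\bigr) = H_e\bigl(\widetilde{\gamma}(s)\bigr),
\]
so $\widetilde{\gamma}$ is the integral curve of $H_e$ starting at $ug$. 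Evaluating both descriptions at $s=1$ and recalling that the flow exponential map is the time-one flow, $\exp(H_x)(p)=\Phi^{H_x}_1(p)$, we get $\exp(H_e)(ug)=\widetilde{\gamma}(1)=R_g\bigl(\gamma(1)\bigr)=R_g\bigl(\exp(H_{ge})(u)\bigr)=\exp(H_{ge})(u)\cdot g$, which is the claimed identity.

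There is essentially no obstacle here: the statement is a routine corollary of Lemma~\ref{lemma:changeHorizontal} together with the naturality of flows under $R_g$-relatedness. The only point requiring a word of care is the domain of definition of the integral curves, which is handled by the standing assumption of geodesic completeness (equivalently, completeness of all $H_x$, $x\in\RR^d$) that is in force throughout this part of the paper; without it one would simply restrict to the maximal interval on which $\gamma$ is defined and observe that $R_g$, being a diffeomorphism, preserves it.
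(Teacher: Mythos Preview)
Your proof is correct and follows essentially the same approach as the paper: take an integral curve of $H_{ge}$ starting at $u$, apply $R_g$, and use Lemma~\ref{lemma:changeHorizontal} to see that the result is an integral curve of $H_e$ starting at $ug$, whence the identity follows from uniqueness of integral curves and the definition of the flow exponential map.
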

\begin{proof}
	If $\psi$ is an integral curve of $H_{ge}$ with $\psi(0)=u$, then Lemma~\ref{lemma:changeHorizontal} implies that $\widetilde{\psi}:=\psi g$ is an integral curve of
	$H_{e}$ with $\widetilde{\psi}(0)=ug$ and the claim follows from the uniqueness of integral curves and the definition of the flow exponential map.
\end{proof}
\begin{lemma}  \label{lemma:changeiterated}
	For every smooth function $f\in C^\infty(F(M)), x_1,\ldots,x_m \in \RR^d$ and $g\in \GL(d)$ we have
	$$(H_{x_1}\cdots H_{x_m} f)\circ R_g=H_{gx_1}\cdots H_{gx_m}(f\circ R_g). $$
\end{lemma}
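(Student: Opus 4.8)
The plan is to reduce everything to the single-operator case $m=1$, which is a direct restatement of Lemma~\ref{lemma:changeHorizontal}, and then to bootstrap to arbitrary $m$ by induction on the number of operators.

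First I would unpack Lemma~\ref{lemma:changeHorizontal}. The equality $H_e(ug)=d(R_g)_u(H_{ge}(u))$ is an identity of tangent vectors in $T_{ug}F(M)$; applying both sides as derivations to an arbitrary $h\in C^\infty(F(M))$ and using the defining property of the differential, $d(R_g)_u(v)[h]=v[h\circ R_g]$ for $v\in T_uF(M)$, gives
$$(H_e h)(ug)=H_e(ug)[h]=d(R_g)_u(H_{ge}(u))[h]=H_{ge}(u)[h\circ R_g]=\big(H_{ge}(h\circ R_g)\big)(u).$$
Since $u\in F(M)$ is arbitrary, this is precisely the functional identity $(H_e h)\circ R_g=H_{ge}(h\circ R_g)$, i.e. the claim of the lemma for $m=1$. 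Equivalently, this is just the statement that the vector fields $H_{ge}$ and $H_e$ are $R_g$-related in the sense of~\eqref{eq:relatedVF}. Since $R_g$ is a diffeomorphism, $h\circ R_g\in C^\infty(F(M))$, so this identity may be iterated freely.

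Next I would run the induction on $m$, the statement for $m-1$ being taken as hypothesis for every smooth function and every choice of $m-1$ vectors. Write $\phi:=H_{x_2}\cdots H_{x_m}f\in C^\infty(F(M))$. Applying the $m=1$ case to $h=\phi$ yields $(H_{x_1}\phi)\circ R_g=H_{gx_1}(\phi\circ R_g)$, while applying the inductive hypothesis to $f$ with the vectors $x_2,\dots,x_m$ yields $\phi\circ R_g=H_{gx_2}\cdots H_{gx_m}(f\circ R_g)$. Substituting the second identity into the first gives
$$(H_{x_1}\cdots H_{x_m}f)\circ R_g=H_{gx_1}H_{gx_2}\cdots H_{gx_m}(f\circ R_g),$$
completing the induction and hence the proof.

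There is no genuine obstacle here: the only step that requires care is the base-case translation of the geometric identity of Lemma~\ref{lemma:changeHorizontal} into the identity of first-order differential operators $(H_e\,\cdot\,)\circ R_g=H_{ge}(\,\cdot\circ R_g)$ — which amounts to reading off $R_g$-relatedness of $H_e$ and $H_{ge}$ as in the proof of Lemma~\ref{lemma:MarcusSDEandRelatedVF} — after which the assertion for general $m$ is pure bookkeeping.
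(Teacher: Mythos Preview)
Your proof is correct and follows essentially the same route as the paper: induction on $m$ with the base case $m=1$ obtained by reading Lemma~\ref{lemma:changeHorizontal} as the functional identity $(H_e h)\circ R_g=H_{ge}(h\circ R_g)$. The only cosmetic difference is that the paper peels off the innermost operator in the inductive step whereas you peel off the outermost one, but the argument is otherwise identical.
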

\begin{proof}
	We prove the lemma by induction on $m$. The base case of $m=1$ is just a restatement of Lemma~\ref{lemma:changeHorizontal}. For an inductive step let the claim hold for $m$ vectors in $\RR^d$ and
	take an additional vector $x_{m+1}$. Denote $g=H_{x_{m+1}}f$ and then
	$$ (H_{x_1}\cdots H_{x_{m+1}} f)\circ R_g=(H_{x_1}\cdots H_{x_m} g)\circ R_g=H_{gx_1}\cdots H_{gx_m}(g\circ R_g)=H_{gx_1}\cdots H_{gx_{m+1}}(f\circ R_g),$$ where the second equality holds by the induction hypothesis and the last equality by the base case $m=1.$		
\end{proof}

So far we were able to start with a curve $\gamma$ in $M$ and after fixing an initial frame $u_0$ we could construct a unique horizontal lift $u_t$ in the frame bundle $F(M)$. We can go one step further and define an \emph{anti-development of a curve $\gamma$} (or of its lift $u_t$) by 
\begin{equation*}
w_t:=\int_0^tu_s^{-1}(\dot{\gamma}_s)\d s,
\end{equation*}
which is a smooth curve in $\RR^d$ started at $0$. One could also reverse the construction and start with a curve $w_t$ in $\RR^d$ and construct a horizontal curve $u_t$ on $F(M)$ (and a curve
$\gamma_t=\pi(u_t)$ on $M$) whose anti-development is $w_t.$ Once we fix the initial frame this correspondence is unique and $u_t$ is given as a solution of the ODE 
\begin{equation*}
\dot{u}_t=H_i(u_t)\dot{w}^i_t.
\end{equation*}
Therefore, we obtain a one-to-one correspondence between a (piecewise) smooth curves in $M$, their horizontal lifts in $F(M)$ and their anti-developments in $\RR^d$ once the initial frame is fixed.

It is interesting to check how this correspondence is expressed for special curves -- geodesics. One quickly
finds that the anti-development of a geodesic is a uniform motion, i.e.\ a constant speed straight line. This simplifies
the ODE that the horizontal lift $u_t$ is a solution of and actually shows that $u_t$ is an integral curve of some fundamental horizontal vector field, i.e.\ there exists some $e\in\RR^d$ such that $\dot{u}_t=H_e(u_t).$ Therefore,
 uniform motion in $\RR^d$ corresponds to moving along an integral curve of some fundamental horizontal vector field or equivalently moving along a geodesic in $M.$ This also relates flow exponential map\footnote{We have
 only defined a flow of complete fields, but analogously we could define a local flow of a (non-complete) field.} on $F(M)$ to a geodesic exponential map on $M$ so that the equality 
\begin{equation} \label{bothExponentials}
\pi(\exp(H_e)(u))=\Exp_p(d\pi_u(H_e))=\Exp_p(ue)
\end{equation} holds for any $e\in \RR^d$ and any frame $u$ with $\pi(u)=p.$

\bibliographystyle{amsalpha}
\bibliography{source}

\end{document}